\tikzstyle{vertex}=[circle, draw, inner sep=0pt, minimum size=6pt, fill=black]
\newcommand{\vertex}{\node[vertex]}
\theoremstyle{plain}
\newtheorem{thm}{Theorem}[section]
\newtheorem{lemma}[thm]{Lemma}
\newtheorem{prop}[thm]{Proposition}
\newtheorem{conj}[thm]{Conjecture}
\newtheorem*{thm*}{Theorem}
\newtheorem*{lemma*}{Lemma}
\newtheorem*{prop*}{Lemma}
\newtheorem*{cor*}{Corollary}
\newtheorem*{conj*}{Conjecture}
\theoremstyle{definition}
\newtheorem{defn}[thm]{Definition}
\newtheorem*{defn*}{Definition}
\newtheorem{ex}[thm]{Example}
\theoremstyle{remark}
\newtheorem{rmk}[thm]{Remark}
\newcommand{\bfx}{\mathbf{x}}
\newcommand{\rank}{\textnormal{rank}}
\DeclareMathOperator\trop{trop}
\newcommand{\cm}{{\rm CM}}
\newcommand{\initial}{{\rm in}}
\DeclareMathOperator\argmax{argmax}
\DeclareMathOperator{\berg}{\tilde{\mathcal{B}}}
\DeclareMathOperator{\cl}{cl}
\DeclareMathOperator{\contract}{/ \,}
\DeclareMathOperator{\aut}{Aut}
\DeclareMathOperator{\source}{so}
\DeclareMathOperator{\target}{ta}
\DeclareMathOperator{\aff}{Aff}
\title{Generic symmetry-forced infinitesimal rigidity: translations and rotations}
\author{Daniel Irving Bernstein}
\address{Department of Mathematics, Tulane University, New Orleans LA, USA}
\email{dibernst@mit.edu}\urladdr{https://dibernstein.github.io}
\begin{document}

\begin{abstract}
We characterize the combinatorial types of symmetric frameworks in the plane that are minimally generically symmetry-forced infinitesimally rigid when the symmetry group consists of rotations and translations. Along the way, we use tropical geometry to show how a construction of Edmonds that associates a matroid to a submodular function can be used to give a description of the algebraic matroid of a Hadamard product of two linear spaces in terms of the matroids of each linear space. This leads to new, short, proofs of Laman's theorem, and a theorem of Jord{\'a}n, Kaszanitzky, and Tanigawa, and Malestein and Theran characterizing the minimally generically symmetry-forced rigid graphs in the plane when the symmetry group contains only rotations.
\end{abstract}

\maketitle

\section{Introduction}

The vertex and edge sets of a graph $G$ will be denoted $V(G)$ and $E(G)$.
A \emph{$d$-dimensional (bar and joint) framework} is a pair $(G,p)$, consisting of a graph $G$ and a function $p:V(G)\rightarrow \mathbb{R}^d$.
We will often view such functions as points in $(\mathbb{R}^d)^{V(G)}$,
writing $p(u)$ as $p^{(u)}$ and $p(u)_i$ as $p^{(u)}_i$.
Intuitively, one should think about a framework as a physical construction of $G$, where an edge between vertices $u$ and $v$ is a rigid bar of length $\|p(u)-p(v)\|$, free to move around each of its incident vertices.
A \emph{motion} of a framework $(G,p)$ in $\mathbb{R}^d$ is a
continuous function $f: [0,1]\rightarrow (\mathbb{R}^d)^{V(G)}$ such that $f(0) = p$
and $\|f(t)^{(u)}-f(t)^{(v)}\|_2^2 = \|p^{(u)}-p^{(v)}\|_2^2$ for all $uv \in E(G)$ and $t \in [0,1]$.
Every direct Euclidean isometry of $\mathbb{R}^d$ gives rise to a motion of $(G,p)$ and such motions are called \emph{trivial}.
A framework is said to be \emph{rigid} if its only motions are trivial.
See Figure~\ref{fig:fourCycleNotRigid} for an example.

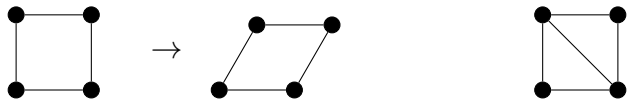
\begin{figure}
    \begin{tikzpicture}
        \vertex (1) at (0,0){};
        \vertex (2) at (1,0){};
        \vertex (3) at (1,1){};
        \vertex (4) at (0,1){};
        \path
            (1) edge (2) edge (4)
            (2) edge (3)
            (3) edge (4)
        ;
        \node at (2,1/2){$\rightarrow$};
        \vertex (1a) at (0+2.7,0){};
        \vertex (2a) at (1+2.7,0){};
        \vertex (3a) at ($ (3/2+2.7,{sqrt(3)/2}) $){};
        \vertex (4a) at ($ (1/2+2.7,{sqrt(3)/2}) $){};
        \path
            (1a) edge (2a) edge (4a)
            (2a) edge (3a)
            (3a) edge (4a)
        ;
        \vertex (1b) at (0+7,0){};
        \vertex (2b) at (1+7,0){};
        \vertex (3b) at (1+7,1){};
        \vertex (4b) at (0+7,1){};
        \path
            (1b) edge (2b) edge (4b)
            (2b) edge (3b) edge (4b)
            (3b) edge (4b)
        ;        
    \end{tikzpicture}
    \caption{On the left, we see a nontrivial motion of a flexible framework in the plane. Before applying the flex, the framework is symmetric with respect to an order-four rotation about the center. The flex destroys this symmetry. The framework on the right is rigid.}\label{fig:fourCycleNotRigid}
\end{figure}

Asimow and Roth \cite{asimow1978rigidity} showed that for each fixed finite graph $G$,
rigidity in $\mathbb{R}^d$ is a generic property in the sense that either almost all $d$-dimensional frameworks
on $G$ are rigid, or almost all $d$-dimensional frameworks on $G$ are flexible (i.e.~not rigid).
Thus, for each $d \ge 1$, it becomes meaningful ask for a characterization of which graphs are \emph{generically rigid} in $\mathbb{R}^d$, i.e.~whether or not almost all $d$-dimensional frameworks on $G$ are rigid.

For $d = 1$, one can see intuitively that a graph is generically rigid if and only if it is connected.
Generically rigid graphs in $\mathbb{R}^2$ were characterized by
Hilda Pollaczek-Geiringer in 1927~\cite{pollaczek1927gliederung}.
Her work was evidently forgotten until recently, since her characterization often bears the name ``Laman's theorem'' 
due to its rediscovery in 1970 by Gerard Laman~\cite{laman1970graphs}.
Her theorem, stated below as Theorem~\ref{thm:lamansTheorem}, has been used to develop a polynomial-time graph pebbling algorithm for determining whether a given graph is generically rigid in the plane~\cite{jacobs1997algorithm}.
Characterizing generic rigidity in three or more dimensions is open.

\begin{thm}[\cite{pollaczek1927gliederung}]\label{thm:lamansTheorem}
    A graph $G$ is \emph{minimally} generically rigid in the plane if and only if for every subgraph $H$ of $G$,
    \[
        |E(H)| \le 2 |V(H)| - 3
    \]
    with equality when $H = G$. A graph is generically rigid in the plane if and only if it contains a minimally generically rigid subgraph.
\end{thm}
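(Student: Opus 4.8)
The plan is to deduce the theorem from an explicit description of the generic two-dimensional rigidity matroid. To a framework $(G,p)$ one attaches its \emph{rigidity matrix} $R(G,p)$, with rows indexed by $E(G)$ and columns by $V(G)\times\{1,2\}$, whose row for $uv$ has $p^{(u)}-p^{(v)}$ in the columns for $u$, has $p^{(v)}-p^{(u)}$ in the columns for $v$, and is $0$ elsewhere; its kernel is the space of infinitesimal motions. By the theorem of Asimow and Roth, $G$ is generically rigid in the plane precisely when $\rank R(G,p)=2|V(G)|-3$ for generic $p$, and the linear matroid of $R(G,p)$ at a generic $p$ does not depend on $p$; write $\calr(G)$ for it. It therefore suffices to establish the following description of $\calr(G)$: a set $I\subseteq E(G)$ is independent in $\calr(G)$ if and only if $|E(H)|\le 2|V(H)|-3$ for every subgraph $H$ of $(V(I),I)$. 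Given this, both assertions of the theorem are bookkeeping: $G$ is minimally generically rigid exactly when $E(G)$ is a basis of $\calr(G)$, and a basis of $\calr(G)$ exists if and only if $\rank\calr(G)=2|V(G)|-3$, using that for a disconnected subgraph the Laman count is a consequence of the counts on its components.

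For one inclusion I would give the standard necessity argument: for a subgraph $H$, the rows of $R(G,p)$ indexed by $E(H)$ are supported on the columns of $V(H)$, and the resulting submatrix annihilates the three-dimensional space of infinitesimal motions spanned by the two translations and one rotation of $\mathbb{R}^2$; hence its rank is at most $2|V(H)|-3$ as soon as $|V(H)|\ge 2$, so an independent $I$ satisfies $|E(H)|\le 2|V(H)|-3$ for every subgraph $H$ of $(V(I),I)$ (the case $|V(H)|\le 2$ being immediate for a simple graph).

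For the reverse inclusion I would pass to $\mathbb{C}$ and use the Hadamard-product description of rigidity together with Edmonds' construction of a matroid from a submodular function. Setting $z_v=p^{(v)}_1+\sqrt{-1}\,p^{(v)}_2$ and $w_v=p^{(v)}_1-\sqrt{-1}\,p^{(v)}_2$, the squared length of $uv$ equals $(z_u-z_v)(w_u-w_v)$, and as $p$ ranges over $(\mathbb{C}^2)^{V(G)}$ the vectors $(z_u-z_v)_{uv\in E(G)}$ and $(w_u-w_v)_{uv\in E(G)}$ range \emph{independently} over the linear space $L_G:=\{(z_u-z_v)_{uv\in E(G)}:z\in\mathbb{C}^{V(G)}\}\subseteq\mathbb{C}^{E(G)}$, the image of the transposed incidence matrix of $G$. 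Thus the complex edge-measurement variety of $G$ is the Hadamard product $\overline{L_G*L_G}$, and since the Jacobian of the edge-measurement map is $2R(G,p)$, the matroid $\calr(G)$ is exactly the algebraic matroid of $\overline{L_G*L_G}$ on ground set $E(G)$; a computation of a generic fibre of the measurement map (the complex translations in $z$, the complex translations in $w$, and a complex scaling $z\mapsto\lambda z$, $w\mapsto\lambda^{-1}w$ give a three-dimensional fibre when $G$ is connected) shows $\dim\overline{L_G*L_G}=2|V(G)|-3$. The coordinate matroid of $L_G$ is the graphic matroid $M(G)$, so invoking the description, developed in this paper via tropicalization, of the algebraic matroid of a Hadamard product $L_1*L_2$ in terms of the coordinate matroids of $L_1$ and $L_2$ identifies $\calr(G)$ with the matroid that Edmonds' construction assigns to the submodular function $g_G(S)=2\,r_{M(G)}(S)-1$ (the instance $r_{M(L_1)}+r_{M(L_2)}-1$ of the Hadamard-product formula with $L_1=L_2=L_G$) — the matroid whose independent sets are those $I$ with $|J|\le g_G(J)$ for every nonempty $J\subseteq I$. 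Since $r_{M(G)}(S)=|V(S)|-c(S)$, where $c(S)$ is the number of components of $(V(S),S)$, one has $g_G(S)=2|V(S)|-3$ whenever $(V(S),S)$ is connected, and in both the sparsity condition of the claim and the independence condition just stated the constraints for disconnected edge sets follow from those for connected ones; this identifies $\calr(G)$ with the asserted $(2,3)$-sparsity matroid, and the theorem follows.

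The step I expect to be the crux is the identification of $\calr(G)$ with the algebraic matroid of $\overline{L_G*L_G}$ and the feeding of the fact ``coordinate matroid of $L_G$ equals $M(G)$'' into the Hadamard-product theorem: one has to be sure that replacing the real edge-measurement variety by the complex variety $\overline{L_G*L_G}$ — that is, letting $z$ and $w$ move independently — changes neither its dimension nor the algebraic matroid read off from the rigidity matrix, and one must pin down $\dim\overline{L_G*L_G}=2|V(G)|-3$. The remaining parts — checking that $g_G=2r_{M(G)}-1$ reproduces exactly the $(2,3)$-sparsity matroid, that $\rank\calr(G)=2|V(G)|-3$ exactly when $G$ has a $(2,3)$-tight spanning subgraph, and handling small or disconnected graphs — are routine.
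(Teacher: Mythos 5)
Your proposal is correct and follows essentially the same route as the paper: reduce to a matroid statement via the rigidity matrix and Asimow--Roth (Proposition~\ref{prop:rigidIffSpanning}), complexify via $z=p_1+\sqrt{-1}\,p_2$, $w=p_1-\sqrt{-1}\,p_2$ to exhibit the squared-distance variety as a Hadamard product of two copies of a linear space whose matroid is graphic (Proposition~\ref{prop:cmHadamardProduct}), apply the tropical Hadamard-product formula $\mathcal{M}(L\star L)=\mathcal{M}(2r_M-1)$ (Theorem~\ref{thm:mainResult}), and identify $\mathcal{M}(2r_M-1)$ with the $(2,3)$-sparsity matroid exactly as in the proof of Theorem~\ref{thm:lamansMatroid}. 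One side remark is inaccurate but not load-bearing: $\dim\overline{L_G\star L_G}=2|V(G)|-3$ fails for general connected $G$ (e.g.\ a tree has a generic fibre of dimension larger than three); the fibre argument only gives the upper bound $2|V(G)|-3$, with equality precisely in the generically rigid case, and your argument never actually needs the unconditional equality.
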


Questions about rigidity of frameworks appear in diverse applications, including
sensor network localization~\cite{yang2012detecting}, biochemistry~\cite{whiteley2005counting},
civil engineering~\cite{kanno2001group,tarnai1980simultaneous},
and crystallography \cite{wegner2007rigid,sartbaeva2006flexibility,fowler2002symmetry,guest2003determinacy}.
Frameworks appearing in the latter two applications, especially in crystallography, often have symmetry constraints.
We now formally define symmetries of frameworks.

\begin{defn}\label{defn:symmetry}
    If $(G,p)$ is a $d$-dimensional framework and $T:\mathbb{R}^d\rightarrow \mathbb{R}^d$ is a Euclidean isometry,
    then we say that $T$ is a \emph{symmetry of $(G,p)$} if
    \begin{enumerate}
        \item $T$ induces a bijection $\phi:V(G)\rightarrow V(G)$ on the vertices of $G$, (i.e.~if for each $v \in V(G)$, there exists $\phi(v) \in V(G)$ such that $T(p(v)) = p(\phi(v))$ and $\phi:V(G)\rightarrow V(G)$ is a bijection),
        \item $uv$ is an edge of $G$ if and only if $\phi(u)\phi(v)$ is as well, and
        \item $\phi(v) \neq v$ for all $v \in V(G)$.
    \end{enumerate}
    The symmetries of a framework form a group. If $\mathcal{S}$ is a subgroup of isometries of $d$-dimensional Euclidean space such that each $T \in \mathcal{S}$ is a symmetry of $(G,p)$, then we say that $(G,p)$ is \emph{$\mathcal{S}$-symmetric}.
\end{defn}

\begin{rmk}
    Some authors would consider any $T$ satisfying the first two conditions in Definition~\ref{defn:symmetry} a symmetry, and say that $T$ is a \emph{free} symmetry if it satisfies all three.
\end{rmk}

If a symmetric framework is flexible, then its motions may break the symmetry. Consider for example the framework on the left in Figure~\ref{fig:fourCycleNotRigid} - it has symmetry with respect to the order-four rotation about its center, but the indicated motion breaks that symmetry. Visit the URL in the caption of Figure~\ref{figure:wallpaperSymmetry} for an animation of a motion of a symmetric framework that preserves symmetry.
If all motions of a given $\mathcal{S}$-symmetric framework $(G,p)$ are either trivial, or break the symmetry at some point during the motion, then we say that $(G,p)$ is \emph{$\mathcal{S}$-symmetry forced rigid}.

The most compact way to represent an $\mathcal{S}$-symmetric framework is using an \emph{$\mathcal{S}$-gain graph}, which is a directed multigraph whose arcs are labeled by elements of $\mathcal{S}$. More formally, if $G$ is a directed multigraph, we let $V(G)$ and $A(G)$ denote its vertex and arc set, and an $\mathcal{S}$-gain graph is a pair $(G,\phi)$ where $\phi: A(G)\rightarrow \mathcal{S}$ is a function. An $\mathcal{S}$-symmetric framework in $d$ dimensions is then encoded as $(G,\phi,p)$ where $p:V(G)\rightarrow \mathbb{R}^d$. This will be fleshed out in more detail later, but the rough idea is that $V(G)$ now corresponds to vertex \emph{orbits} of some framework with $\mathcal{S}$ symmetry, and the function $p$ specifies the location of a single vertex in each orbit. The locations of the remaining vertices are then determined via the $\mathcal{S}$-action. The arcs and their labels specify edge orbits in a similar way. See Figure~\ref{fig:representAsGainGraph} for an example.

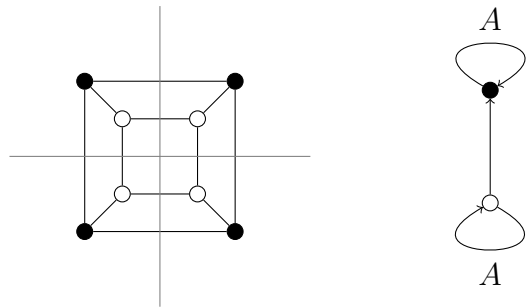
\begin{figure}
    \begin{tikzpicture}
        \vertex[fill=white] (a1) at (1/2,1/2){};
        \vertex[fill=white] (b1) at (-1/2,1/2){};
        \vertex[fill=white] (c1) at (-1/2,-1/2){};
        \vertex[fill=white] (d1) at (1/2,-1/2){};
        \vertex (a2) at (1,1){};
        \vertex (b2) at (-1,1){};
        \vertex (c2) at (-1,-1){};
        \vertex (d2) at (1,-1){};
        \path
            (a1) edge (b1) edge (d1) edge (a2)
            (b1) edge (c1) edge (b2)
            (c1) edge (d1) edge (c2)
            (d1) edge (d2)
            (a2) edge (b2) edge (d2)
            (b2) edge (c2)
            (c2) edge (d2)
        ;
        \draw[gray] (0,-2) -- (0,2);
        \draw[gray] (-2,0) -- (2,0);
    \end{tikzpicture}
    \qquad
    \begin{tikzpicture}
        \node at (0,-2){};
        \node at (0,2){};
        \vertex[fill=white] (a) at (0,-.75){};
        \vertex (b) at (0,.75){};
        \path[->] (a) edge (b){};
        \draw[->] (a) to [out=330,in=210,looseness=20] (a);
        \draw[->] (b) to [out=150,in=30,looseness=20] (b);
        \node at (0,1.7){$A$};
        \node at (0,-1.7){$A$};
    \end{tikzpicture}
    \caption{The framework on the left is $\mathcal{S}$-symmetric where $\mathcal{S}$ is generated by a ninety-degree rotation $A$.
    On the right is a gain graph expressing such a framework
    (the identity gain label is suppressed).
    }\label{fig:representAsGainGraph}
\end{figure}

We will only consider $\mathcal{S}$-gain graphs with finitely many vertices, but even in this case,
the corresponding symmetric frameworks will have infinitely many vertices when $\mathcal{S}$ is infinite (e.g.~Figure~\ref{figure:wallpaperSymmetry}).
Care must therefore be taken when adapting the usual linear-algebraic techniques of rigidity theory
since some of the relevant vector spaces are infinite-dimensional (see \cite{owen2011infinite,power2014crystal} for a functional-analytic approach to this problem).
In particular, it is unclear whether rigidity is a generic property of a gain graph
for infinite subgroups of Euclidean groups that contain arbitrarily small nonzero translations.
Fortunately however, \emph{infinitesimal symmetry-forced rigidity}, a stronger notion of symmetry-forced rigidity
that is equivalent to symmetry-forced rigidity for many groups if one invokes a certain genericity assumption~\cite{borcea2010periodic,schulze2011orbit,malestein2014frameworks},
is a generic property of $\mathcal{S}$-gain graphs.

The main result of this paper, Theorem~\ref{thm:mainTheoremTranslationsAndRotations}, is a combinatorial characterization of the $\mathcal{S}$-gain graphs that are generically minimally infinitesimally rigid in
$\mathbb{R}^2$ when $\mathcal{S}$ is a subgroup of the
\emph{direct} Euclidean isometries of $\mathbb{R}^2$.
This generalizes Theorem~\ref{thm:lamansTheorem} and several analogous results for symmetry-forced rigidity that have appeared previously.
Ross~\cite{ross2015inductive} characterizes the $\mathbb{Z}^2$-gain graphs
that are generically rigid in $\mathbb{R}^2$.
Malestein and Theran~\cite{malestein2013generic} study a generalization of $\mathbb{Z}^2$-forced rigidity
that allows the lattice to vary, characterizing generic rigidity in this situation. They also characterize symmetry-forced generic rigidity for orientation-preserving wallpaper groups with flexible lattices in~\cite{malestein2014frameworks}, and rotation groups~\cite{malestein2015frameworks}.
Jord{\'a}n, Kaszanitzky, and Tanigawa~\cite{jordan2016gain} characterize the $\mathcal{S}$-gain graphs that are
generically rigid when $\mathcal{S}$ is a rotation group, or a dihedral group whose rotation subgroup has odd order.
The new groups covered by our main theorem are the frieze group generated by a 180-degree rotation and a translation, and all non-discrete groups of direct Euclidean isometries of $\mathbb{R}^2$.

We now sketch the big ideas behind the proof of our main theorem.
For each group $\mathcal{S}$ of Euclidean isometries of $\mathbb{R}^d$,
we define the \emph{symmetric Cayley-Menger variety} $\cm_n^\mathcal{S}$.
When $\mathcal{S}$ is the trivial group, $\cm_n^\mathcal{S}$ is the usual Cayley-Menger variety, which is parameterized by the pairwise distances among $n$ points in $\mathbb{R}^d$.
The symmetric Cayley-Menger variety lives in a possibly infinite dimensional space whose coordinates are indexed by the arcs of the \emph{complete $\mathcal{S}$-gain graph on $n$ vertices}, which we define to be the gain graph on $n$ vertices with $|\mathcal{S}|$ arcs between each pair of vertices, each labeled by a distinct element of $\mathcal{S}$, and $|\mathcal{S}|-1$ loops at each vertex, each labeled by a distinct non-identity element of $\mathcal{S}$ (see Figure~\ref{fig:completeGainGraph}). The generically infinitesimally rigid $\mathcal{S}$-gain graphs will then be the gain graphs $(G,\phi)$ such that the coordinate projection of $\cm_n^\mathcal{S}$ onto the arcs of $(G,\phi)$ preserves dimension. By definition, these are the spanning sets in the algebraic matroid of $\cm_n^\mathcal{S}$. Our main tools for demystifying this matroid are tropical geometry (particularly Bergman fans), a construction of Edmonds that creates matroids from submodular functions, and matroid lifts.

\begin{figure}
    \begin{tikzpicture}
        \node at (-2.5,0){$K_2(\mathbb{Z}_2)=$};
        \vertex (1) at (0,0){};
        \vertex (2) at (1,0){};
        \draw[->] (1) to [out=135,in=225,looseness=20] (1);
        \node at (-1,0){1};
        \draw[->] (2) to [out=45,in=-45,looseness=20] (2);
        \node at (2,0){1};
        \draw[->] (1) to [out=45,in=135] (2);
        \node at (0.5,0.5){0};
        \draw[->] (1) to [out=-45,in=225] (2);
        \node at (0.5,-0.5){1};
    \end{tikzpicture}
    \caption{The complete $\mathbb{Z}_2$-gain graph on two vertices.}\label{fig:completeGainGraph}
\end{figure}
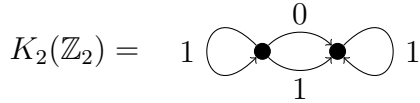

We now outline the remainder of this paper.
Section~\ref{section:preliminaries} provides some initial background in rigidity theory, matroid theory, and algebraic geometry.
Section~\ref{section:mainResult} contains Theorem~\ref{thm:mainResult}, the engine behind everything else in this paper, and the main result of earlier arXiv versions.
It uses a construction of Edmonds to describe the algebraic matroid of a Hadamard product of linear spaces.
Its proof uses tropical geometry.
We show how Pollaczek-Geiringer's characterization of minimal generic rigidity in the plane (Theorem~\ref{thm:lamansTheorem}) is an easy consequence of Theorem~\ref{thm:mainResult}.
Section~\ref{section:symmetry} contains the necessary background on gain graphs and symmetry-forced rigidity.
Theorem~\ref{thm:cyclicSymmetryMatroid} is a result concurrently proven by Jord{\'a}n, Kaszanitzky, and Tanigawa in~\cite{jordan2016gain} and Malestein and Theran in~\cite{malestein2015frameworks}.
It characterizes the $\mathcal{S}$-gain graphs
that are generically minimally rigid in $\mathbb{R}^2$ when $\mathcal{S}$ is a rotation group.
We show how it follows from Theorem~\ref{thm:mainResult}
in the same way that Pollaczek-Geiringer's theorem does.
Section~\ref{section:symmetryGroupsWithTranslation} begins with the background on matroid lifts required for Theorem~\ref{thm:mainAffine},
a generalization of Theorem~\ref{thm:mainResult} that describes the algebraic matroid of a Hadamard product of \emph{affine} spaces.
We use this to prove the paper's main result,
Theorem~\ref{thm:mainTheoremTranslationsAndRotations}, which characterizes generic minimal infinitesimal rigidity 
of $\mathcal{S}$-gain graphs in $\mathbb{R}^2$ when $\mathcal{S}$ consists of translations and rotations.

\section*{Acknowledgments}
Thanks to Bill Jackson for asking the question that spawned this project
and for a particularly helpful conversation.
This work began at the 2020 Heilbronn focused research group on discrete structures
at Lancaster University in the UK.
The author was supported by a US NSF Mathematical Sciences Postdoctoral Research Fellowship (DMS-1802902).

\section{Infinitesimal rigidity, matroids, and tropical geometry}\label{section:preliminaries}
This section lays out some initial preliminaries.
We assume that the reader is familiar with the basics of matroid theory.
Any undefined matroid-theoretic symbols and terms can be found in \cite[Chapters 1-3]{oxley2006matroid}.

\subsection{Infinitesimal rigidity}

An \emph{infinitesimal motion} of a $d$-dimensional framework $(G,p)$ is a vector $g \in (\mathbb{R}^d)^{V(G)}$
satisfying $\langle g^{(u)}-g^{(v)}, p^{(u)}-p^{(v)}\rangle = 0$ for all $u,v \in E(G)$.
In other words, an infinitesimal motion assigns a direction to each vertex such that when the vertices
move an \emph{infinitesimal} amount in the assigned directions, the edges neither stretch nor contract.
If $f$ is a motion of $(G,p)$, then $f'(0)$ is an infinitesimal motion of $(G,p)$.
When $g = f'(0)$ for a trivial motion $f$, then $g$ is said to be a \emph{trivial infinitesimal motion}.

The set of infinitesimal motions of a framework $(G,p)$ is a linear subspace of $(\mathbb{R}^d)^{V(G)}$.
When the only infinitesimal motions of a framework are trivial, the framework is said to be \emph{infinitesimally rigid}.
The dimension of the set of direct Euclidean isometries of $\mathbb{R}^d$,
and therefore the dimension of the linear space of trivial infinitesimal motions, is $\binom{d+1}{2}$.
Thus a framework is infinitesimally rigid if and only if its linear space of infinitesimal motions
has dimension $\binom{d+1}{2}$.
If a framework is infinitesimally rigid, then it is also rigid (differentiate any nontrivial motion to obtain an infinitesimal one).
An example showing failure of the converse statement appears in Figure~\ref{fig:rigidButNotInf}.

\begin{figure}
    \begin{tikzpicture}
        \vertex (a) at (0,0){};
        \vertex (b) at (2,0){};
        \vertex[fill=white] (c) at (1,0){};
        \vertex (d) at (1,1.2){};
        \vertex (e) at (1,2){};
        \path
            (a) edge (c) edge (d) edge (e)
            (b)  edge (c) edge (d) edge (e)
            (d) edge (e)
        ;
    \end{tikzpicture} \qquad\qquad
    \begin{tikzpicture}
        \vertex (a) at (0,0){};
        \vertex (b) at (2,0){};
        \vertex[fill=white] (c) at (1,0.5){};
        \vertex (d) at (1,1.2){};
        \vertex (e) at (1,2){};
        \path
            (a) edge (c) edge (d) edge (e)
            (b)  edge (c) edge (d) edge (e)
            (d) edge (e)
        ;
    \end{tikzpicture}
    \caption{The framework on the left is rigid, but not infinitesimally rigid.
    To see this, consider a path $f:[0,1]\rightarrow (\mathbb{R}^2)^5$ that slides the unshaded vertex up,
    while keeping the others fixed.
    This is \emph{not} a motion, since it stretches the edges that are incident to the unshaded vertex,
    but $f'(0)$ is a nontrivial infinitesimal motion.
    The framework on the right is both rigid and infinitesimally rigid.
    }\label{fig:rigidButNotInf}
\end{figure}
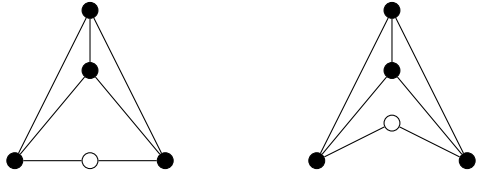

Just as in the case of ordinary rigidity, results of Asimow and Roth~\cite{asimow1979rigidity} show that for each fixed finite graph $G$, infinitesimal rigidity is a generic property in the sense that either $(G,p)$ is rigid for almost all $p:V(G)\rightarrow \mathbb{R}^d$, or $(G,p)$ is flexible for almost all $p:V(G)\rightarrow \mathbb{R}^d$.
Thus one says that a graph is \emph{generically infinitesimally rigid in $d$ dimensions} if $(G,p)$ is infinitesimally rigid for almost all $p:V(G)\rightarrow \mathbb{R}^d$.

\begin{thm}[\cite{asimow1979rigidity}]\label{thm:genericallyRigidIffInf}
    A graph is generically rigid in $d$ dimensions if and only if it is generically infinitesimally rigid in $d$ dimensions.
\end{thm}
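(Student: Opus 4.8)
The plan is to prove the two directions separately, with the implication from generic infinitesimal rigidity to generic rigidity being essentially immediate. Indeed, by definition $(G,p)$ is infinitesimally rigid for almost all $p$, and every infinitesimally rigid framework is rigid (differentiate a nontrivial motion, as noted above); hence $(G,p)$ is rigid for almost all $p$, which is precisely the assertion that $G$ is generically rigid.

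For the converse I would prove the contrapositive: if $G$ is not generically infinitesimally rigid, then it is not generically rigid. Write $n=|V(G)|$ and assume $n\ge d+1$, the finitely many smaller cases being checked directly. Let $f_G\colon(\mathbb{R}^d)^{V(G)}\to\mathbb{R}^{E(G)}$ be the polynomial rigidity map $p\mapsto(\|p^{(u)}-p^{(v)}\|_2^2)_{uv\in E(G)}$, so that its differential $Df_G(p)$ has kernel equal to the space of infinitesimal motions of $(G,p)$. The rank of $Df_G$ attains its maximum value $\rho$ on a Zariski-open dense set of configurations, which therefore has full Lebesgue measure, and every configuration in that set affinely spans $\mathbb{R}^d$. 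For such a $p$, the set $M_p$ of configurations obtained from $p$ by a direct Euclidean isometry of $\mathbb{R}^d$ is a closed smooth submanifold of $(\mathbb{R}^d)^{V(G)}$ of dimension $\binom{d+1}{2}$ whose tangent space at $p$ is the space of trivial infinitesimal motions; since $f_G$ is constant on $M_p$ we get $T_pM_p\subseteq\ker Df_G(p)$, whence $\rho=\rank Df_G(p)=dn-\dim\ker Df_G(p)\le dn-\binom{d+1}{2}$. The hypothesis that $G$ is not generically infinitesimally rigid says exactly that this inequality is strict.

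With this in hand, I would fix a maximum-rank configuration $p$ that affinely spans $\mathbb{R}^d$; such $p$ form a set of full measure, so it suffices to show $(G,p)$ is flexible for every one of them. Since $\rank Df_G$ is lower semicontinuous and bounded above by $\rho$, it equals $\rho$ throughout a neighbourhood of $p$, so the constant rank theorem presents the fibre $L:=f_G^{-1}(f_G(p))$ near $p$ as a connected smooth manifold of dimension $dn-\rho>\binom{d+1}{2}$. Because $M_p\subseteq L$ has dimension only $\binom{d+1}{2}<\dim L$, the set $L\setminus M_p$ meets every neighbourhood of $p$; choose $q\in L\setminus M_p$ close to $p$ and a continuous path $\gamma\colon[0,1]\to(\mathbb{R}^d)^{V(G)}$ from $p$ to $q$ lying entirely in $L$. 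Lying in $L$ makes $\gamma$ a motion of $(G,p)$, and ending outside $M_p$ makes it nontrivial, since a trivial motion is the restriction of a continuous family of direct Euclidean isometries and therefore stays inside $M_p$. Hence $(G,p)$ is flexible, and as this holds for $p$ in a set of full measure, $G$ is not generically rigid.

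The crux will be this last use of the constant rank theorem to upgrade the linear-algebraic failure of infinitesimal rigidity, namely $\dim\ker Df_G(p)>\binom{d+1}{2}$, into an honest finite motion. Making it rigorous requires genuinely choosing $p$ where $\rank Df_G$ is \emph{maximal} --- not merely a configuration with a large kernel --- so that $L$ is a manifold throughout a whole neighbourhood of $p$ rather than only tangentially at $p$; and it requires identifying the congruence orbit $M_p$ with a $\binom{d+1}{2}$-dimensional manifold, which is exactly where affine spanning of the generic configuration enters and which is what forces the directions of $L$ transverse to $M_p$ to be occupied by configurations genuinely noncongruent to $p$. The remaining ingredients --- that the maximum-rank locus is Zariski-open and dense, and that a dense Zariski-open subset of the irreducible space $(\mathbb{R}^d)^{V(G)}$ has full Lebesgue measure --- are standard bookkeeping.
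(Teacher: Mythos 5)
The paper does not prove this statement at all: Theorem~\ref{thm:genericallyRigidIffInf} is quoted from Asimow and Roth and used as a black box, so there is no internal proof to compare against. Your argument is a correct reconstruction of the standard Asimow--Roth proof: the easy direction by differentiating a nontrivial motion, and the converse by applying the constant rank theorem at a configuration where $\rank Df_G$ is maximal, then comparing the dimension of the fibre $L$ with that of the congruence orbit $M_p$ to produce a path in $L$ leaving $M_p$. Two small points deserve care. First, your intermediate assertion that \emph{every} maximum-rank configuration affinely spans $\mathbb{R}^d$ is not literally true (a graph consisting of one edge plus isolated vertices attains maximum rank on collinear configurations); what you actually need, and what is true for $n\ge d+1$, is that the maximum-rank locus and the affinely-spanning locus are each Zariski-open and dense, so their intersection has full measure --- and you do restrict to that intersection when you fix $p$, so the argument is unharmed. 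Second, the claim that $M_p$ is a closed embedded submanifold of dimension exactly $\binom{d+1}{2}$ with tangent space the trivial infinitesimal motions rests on properness of the action of the direct isometry group and triviality of the stabiliser of an affinely spanning configuration; this is standard but is exactly the place where the spanning hypothesis is consumed, and is worth stating explicitly. With those two points made precise, the proof is complete and is essentially the argument of the cited reference.
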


\subsection{Algebraic matroids}
Given a (possibly infinite) set $E$, we denote its power set by $2^E$
and the complex vector space whose coordinates are in bijection with $E$ by $\mathbb{C}^E$.
A \emph{variety} in $\mathbb{C}^E$ is a set of the form
\[
    \{x \in \mathbb{C}^E: f(x) = 0 \textnormal{ for all } f \in I\}
\]
where $I$ is a set of polynomial functions on $\mathbb{C}^E$.
Polynomials, by definition, are only allowed to have finitely many terms, even when $E$ is infinite.
We let $\overline{X}$ denote the \emph{Zariski closure} of $X$,
i.e.~the smallest variety in $\mathbb{C}^E$ containing $X$.
Each subset $S \subseteq E$ canonically defines a coordinate projection $\pi_S: \mathbb{C}^E\rightarrow \mathbb{C}^S$.
The main class of matroids we will consider comes from parameterized varieties in the following way.

\begin{defn}\label{defn:algebraicMatroid}
    Let $E,F$ be sets with $F$ finite, let $f:\mathbb{C}^F\rightarrow \mathbb{C}^E$ be a polynomial map
    and define $V := \overline{f(\mathbb{C}^F)}$.
    The \emph{algebraic matroid of $V$}, denoted $\mathcal{M}(V)$,
    has as independent sets the subsets $I \subseteq E$ such that $\dim(\pi_I(V)) = |I|$.
\end{defn}

The rank of a matroid as in Definition~\ref{defn:algebraicMatroid} is the dimension of $V$.
This dimension is bounded above by $|F|$ and thus finite.
The matroid of linear independence on the rows of the Jacobian of $f$, evaluated at a generic point of $\mathbb{C}^F$,
is isomorphic to $\mathcal{M}(V)$.
We now introduce the \emph{Cayley-Menger variety}, whose algebraic matroid is relevant to rigidity theory.

\begin{defn}\label{defn:cayleyMenger}
    Let $n\ge d$ be integers and let $E$ be the edge set of the complete graph on $n$ vertices.
    Define the map $D^d_n: (\mathbb{R}^d)^n\rightarrow \mathbb{R}^{E}$
    so that for $ij \in E$,
    \[
        D^d_n(x^{(1)},\dots,x^{(n)})_{ij} = \|x^{(i)}-x^{(j)}\|_2^2.
    \]
    The \emph{Cayley-Menger variety} $\cm_n^d$
    is the Zariski closure of $D^d_n((\mathbb{R}^d)^n)$.
\end{defn}

Connections between Cayley-Menger varieties and rigidity theory were first noted in \cite{borcea2004number,borcea2002point}.
The algebraic matroid of $\cm_n^d$ is often called the \emph{$d$-dimensional rigidity matroid}
because of the following well-known proposition.
We provide a proof for the sake of exposition.

\begin{prop}[Folklore]\label{prop:rigidIffSpanning}
    A graph $G$ is generically rigid in $d$-dimensions
    if and only if $E(G)$ is spanning in the algebraic matroid of $\cm_n^d$.
\end{prop}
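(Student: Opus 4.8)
The plan is to connect the three relevant notions — generic rigidity, generic infinitesimal rigidity, and the rank of $E(G)$ in $\mathcal{M}(\cm_n^d)$ — by way of the Jacobian criterion for algebraic matroids. By Theorem~\ref{thm:genericallyRigidIffInf}, $G$ is generically rigid in $d$ dimensions if and only if it is generically infinitesimally rigid, so it suffices to show that $G$ is generically infinitesimally rigid if and only if $E(G)$ is spanning in $\mathcal{M}(\cm_n^d)$. First I would recall that, for a generic $p \in (\mathbb{R}^d)^n$, infinitesimal rigidity of $(G,p)$ means the space of infinitesimal motions has dimension exactly $\binom{d+1}{2}$; equivalently, the \emph{rigidity matrix} $R(G,p)$, whose rows are indexed by $E(G)$ and which encodes the linear conditions $\langle g^{(u)}-g^{(v)}, p^{(u)}-p^{(v)}\rangle = 0$, has rank $dn - \binom{d+1}{2}$. (This is the maximum possible rank, since the trivial infinitesimal motions always lie in the kernel, giving a kernel of dimension at least $\binom{d+1}{2}$.)

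Next I would identify $R(G,p)$ with a submatrix of the Jacobian of the distance map $D^d_n$. Indeed, differentiating $D^d_n(x^{(1)},\dots,x^{(n)})_{ij} = \|x^{(i)}-x^{(j)}\|_2^2$ shows that the row of the Jacobian of $D^d_n$ corresponding to edge $ij$ is, up to the harmless scalar $2$, exactly the row of $R(G,p)$ corresponding to $ij$. Consequently, the Jacobian of $D^d_n$ at $p$ restricted to the rows indexed by $E(G)$ is (a scalar multiple of) $R(G,p)$, and its rank at a generic $p$ equals the generic rank of $R(G,p)$. By the remark after Definition~\ref{defn:algebraicMatroid}, the matroid of linear independence on the rows of the Jacobian of $D^d_n$, evaluated at a generic point, is isomorphic to $\mathcal{M}(\cm_n^d)$; under this isomorphism, the row indexed by $ij$ corresponds to the matroid element $ij$. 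Therefore the rank of the subset $E(G)$ in $\mathcal{M}(\cm_n^d)$ equals the generic rank of $R(G,p)$.

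It remains only to pin down the rank of the whole matroid, i.e.\ of $\cm_n^d$, which by the remark after Definition~\ref{defn:algebraicMatroid} is $\dim \cm_n^d$. Since $\cm_n^d = \overline{D^d_n((\mathbb{R}^d)^n)}$ and the generic fiber of $D^d_n$ is an orbit of the $\binom{d+1}{2}$-dimensional group of direct isometries (two configurations have the same pairwise distances, at least generically, precisely when one is a direct isometry of the other — I would cite this standard fact, or observe that the generic fiber dimension equals the dimension of the trivial infinitesimal motions), we get $\dim \cm_n^d = dn - \binom{d+1}{2}$. Putting the pieces together: $E(G)$ is spanning in $\mathcal{M}(\cm_n^d)$ iff its rank equals $\dim\cm_n^d = dn - \binom{d+1}{2}$ iff the generic rank of $R(G,p)$ is $dn-\binom{d+1}{2}$ iff $(G,p)$ is generically infinitesimally rigid iff (by Theorem~\ref{thm:genericallyRigidIffInf}) $G$ is generically rigid in $d$ dimensions. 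The one point requiring a little care — the ``main obstacle,'' such as it is — is justifying that the generic fiber of $D^d_n$ really has dimension $\binom{d+1}{2}$ and not more, i.e.\ that a generic framework admits no infinitesimal motions beyond the trivial ones when $n \ge d$ and $G$ is complete; this is where one uses that the complete graph on $n \ge d+1$ vertices (or $n=d$, handled directly) is generically infinitesimally rigid, which can be verified by an explicit rank computation or taken as classical.
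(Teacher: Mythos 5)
Your proposal is correct and follows essentially the same route as the paper: reduce to infinitesimal rigidity via Theorem~\ref{thm:genericallyRigidIffInf}, identify the rigidity matrix with the $E(G)$-rows of the Jacobian of $D^d_n$, and invoke the fact that the algebraic matroid is realized by the Jacobian rows at a generic point. You are somewhat more explicit than the paper about why $\dim\cm_n^d = dn-\binom{d+1}{2}$ (via the generic fiber of $D^d_n$ being an isometry orbit), a point the paper glosses over by speaking of the ``maximum possible rank as $(G,p)$ is allowed to vary.''
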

\begin{proof}
    It is a straightforward computation to see that the set of infinitesimal motions of $(G,p)$ is the nullspace
    of the differential of $\pi_{E(G)}\circ D_n^d$ at $p$.
    Thus $(G,p)$ is infinitesimally rigid when the submatrix of the differential of $\pi_{E(G)}\circ D_n^d$
    has maximum possible rank as $(G,p)$ is allowed to vary.
    For fixed $G$ and variable $p$, the rank of this differential attains its maximum value $r$ as long as some $r\times r$ submatrix has nonvanishing determinant.
    The set of such $p$ is a Zariski-open subset of $(\mathbb{R}^d)^n$,
    i.e.~such $p$ are generic.
    The dimension of the image of a polynomial map is equal to the rank of its differential at a generic point,
    so $E(G)$ is spanning in $\mathcal{M}(\cm_n^d)$ if and only if $(G,p)$
    is infinitesimally rigid whenever $p$ is generic.
    The proposition now follows from Theorem~\ref{thm:genericallyRigidIffInf}.
\end{proof}

We now state a few simple, but illuminating, consequences of Proposition~\ref{prop:rigidIffSpanning} and Theorem~\ref{thm:genericallyRigidIffInf}.
The dimension of $\cm_n^d$ is $dn-\binom{d+1}{2}$, which is the minimum number of edges in a graph that is generically rigid in $d$-dimensional space.
The minimally generically rigid graphs in $d$-dimensions are the bases of $\mathcal{M}(\cm_n^d)$
(and characterizing them for $d=3$ is perhaps the most important open problem in rigidity theory).
Finally, Pollaczek-Geiringer's characterization of generic rigidity in the plane (Theorem~\ref{thm:lamansTheorem}) is equivalent to the following.

\begin{thm}[\cite{pollaczek1927gliederung}]\label{thm:lamansMatroid}
    Let $G$ be a graph on vertex set $\{1,\dots,n\}$. Then $E(G)$ is independent in $\mathcal{M}(\cm_n^2)$ if and only if 
    for every subgraph $H$ of $G$ with $m$ vertices,
    \[
        |E(H)| \le 2m-3.
    \]
    If $G$ is independent in $\mathcal{M}(\cm_n^2)$, it is moreover a basis if and only if $|E(G)| = 2n-3$.
\end{thm}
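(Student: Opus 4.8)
The plan is to deduce Theorem~\ref{thm:lamansMatroid} from Pollaczek-Geiringer's Theorem~\ref{thm:lamansTheorem} together with Proposition~\ref{prop:rigidIffSpanning}, using the fact recorded just above that $\rank \mathcal{M}(\cm_n^2) = \dim \cm_n^2 = 2n-3$. I assume $n \ge 2$ so that $2n-3 \ge 1$; smaller $n$ is vacuous. It is enough to establish the two directions of the sparsity characterization of independence, since the basis claim is then the standard matroid fact that an independent set is a basis exactly when its cardinality equals the rank, which here is $2n-3$.

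For the ``only if'' direction, suppose $E(G)$ is independent in $\mathcal{M}(\cm_n^2)$, and let $H$ be a subgraph with $m \ge 2$ vertices. Since a subset of an independent set is independent, $|E(H)| = \rank(E(H)) = \dim \pi_{E(H)}(\cm_n^2)$. The map $\pi_{E(H)}\circ D_n^2$ depends only on the coordinates of a configuration indexed by $V(H)$, so $\pi_{E(H)}(\cm_n^2)$ is, up to Zariski closure, a coordinate projection of $\cm_m^2$; hence $\dim \pi_{E(H)}(\cm_n^2) \le \dim \cm_m^2 = 2m-3$, and $|E(H)| \le 2m-3$.

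For the ``if'' direction, suppose every subgraph $H$ of $G$ with at least two vertices satisfies $|E(H)| \le 2|V(H)|-3$. Greedily add edges of the complete graph on $V(G)$ to produce a supergraph $G' \supseteq G$ that is maximal among spanning subgraphs of $K_n$ for which this sparsity condition holds. The crucial claim is that then $|E(G')| = 2n-3$. Granting it, $G'$ meets the hypotheses of Theorem~\ref{thm:lamansTheorem} (sparsity everywhere, equality at $G'$), so $G'$ is minimally generically rigid; Proposition~\ref{prop:rigidIffSpanning} then makes $E(G')$ spanning in $\mathcal{M}(\cm_n^2)$, and as $|E(G')| = 2n-3 = \rank \mathcal{M}(\cm_n^2)$ it is a basis. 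Thus $E(G) \subseteq E(G')$ is independent, which establishes independence of $E(G)$.

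The claim — that a maximally $(2,3)$-sparse spanning subgraph of $K_n$ (with $n \ge 2$) has exactly $2n-3$ edges — is the one genuinely combinatorial step, and I expect it to be the main obstacle. I would approach it via the calculus of \emph{tight} subgraphs, those $H$ with $|E(H)| = 2|V(H)|-3$: if a non-edge $uv$ of $G'$ could not be added without violating sparsity, then $u$ and $v$ lie in a common tight subgraph, every edge is itself tight on its two endpoints, and the union of two tight subgraphs meeting in at least two vertices is again tight (by the supermodular inequality $|E(H_1 \cup H_2)| + |E(H_1 \cap H_2)| \ge |E(H_1)| + |E(H_2)|$ combined with the sparsity bounds on $H_1 \cap H_2$ and $H_1 \cup H_2$). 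A careful merging argument — first reducing to the case that $G'$ is connected, since an edge joining two components is always addable by a one-line count — then produces a spanning tight subgraph, forcing $|E(G')|=2n-3$. Equivalently, one can simply invoke the classical fact that the $(2,3)$-sparse subgraphs of $K_n$ form the independent sets of a matroid (a $(k,\ell)$-count matroid), in which every independent set extends to a basis, necessarily of size $2n-3$.
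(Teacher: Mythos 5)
Your argument is sound, but it takes a genuinely different route from the paper, and in a direction that runs opposite to the paper's purpose. The paper proves Theorem~\ref{thm:lamansMatroid} \emph{without} using Theorem~\ref{thm:lamansTheorem}: it applies Theorem~\ref{thm:mainResult} to the Hadamard factorization $\cm_n^2 = L\star L$ of Proposition~\ref{prop:cmHadamardProduct} to get $\mathcal{M}(\cm_n^2)=\mathcal{M}(2r_M-1)$ for $M$ the graphic matroid of $K_n$, and then unwinds the count $|I'|\le 2r_M(I')-1 = 2v(I')-2c(I')-1$ into the $(2,3)$-sparsity condition; the classical Theorem~\ref{thm:lamansTheorem} is then a \emph{consequence}, which is the point of the exercise (``new, short, proofs of Laman's theorem''). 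You instead assume Theorem~\ref{thm:lamansTheorem} as known and translate it into matroid language via Proposition~\ref{prop:rigidIffSpanning}. That is not circular within the paper's logical structure, since Theorem~\ref{thm:lamansTheorem} is a cited external result, and both of your directions check out: the ``only if'' direction via $\dim\pi_{E(H)}(\cm_n^2)\le\dim\cm_m^2=2m-3$ is fine, and the ``if'' direction correctly reduces to the standard fact that a maximal $(2,3)$-sparse spanning subgraph of $K_n$ is tight, for which both of your suggested completions (the union-of-tight-subgraphs calculus, or the existence of the $(2,3)$-count matroid) are standard and workable, though you would need to write one of them out, including the existence of some tight graph on $n$ vertices to pin the rank at $2n-3$. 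What the comparison buys: your route is elementary and establishes the equivalence of the two formulations (which the paper asserts in one line before stating Theorem~\ref{thm:lamansMatroid}), but it inherits the full difficulty of the classical theorem as a black box; the paper's route is self-contained modulo Theorem~\ref{thm:mainResult}, avoids the maximal-sparse-implies-tight combinatorics entirely, and is the template that generalizes to the symmetric settings of Theorems~\ref{thm:cyclicSymmetryMatroid} and~\ref{thm:mainTheoremTranslationsAndRotations}, where no analogue of Theorem~\ref{thm:lamansTheorem} is available in advance.
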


\subsection{Matroids from submodular functions}
A function $f: 2^E\rightarrow \mathbb{Z}$ is the rank function of a matroid
if and only if it is submodular, nonnegative, and satisfies $f(S) \le f(S \cup \{e\}) \le f(S) + 1$
for all $S \subset E$ and $e \in E\setminus S$ \cite[Chapter 1]{oxley2006matroid}.
We now describe a construction of Edmonds
that derives a matroid from a set function satisfying weaker conditions.

\begin{defn}[\cite{edmonds1970matroids}]\label{defn:matroidFromSubmodular}
    Let $E$ be a possibly infinite set and assume $f: 2^{E} \rightarrow \mathbb{Z}$ satisfies
    \begin{enumerate}
    	\item\label{item:submodularity} $f(S \cup T) + f(S \cap T) \le f(S) + f(T)$
        for all $A,B \subseteq E$ (submodularity),
        \item\label{item:monotonicity} $f(S) \le f(T)$ whenever $S \subseteq T \subseteq E$ (monotonicity), and
        \item\label{item:finiteRank} there exists $N \in \mathbb{Z}$ such that $f(S) \le N$ for all $S \subseteq E$ (boundedness).
    \end{enumerate}
    Then $\mathcal{M}(f)$ denotes the matroid on ground set $E$
    whose independent sets are
    \[
    	\mathcal{I} := \{I \subseteq E : I = \emptyset \ {\rm or} \ 
        |I'| \le f(I') \textnormal{ for all } I' \subseteq I\}.
    \]
\end{defn}

See \cite[Chapter 11.1]{oxley2006matroid} for a proof that
Definition \ref{defn:matroidFromSubmodular} indeed defines a matroid when $E$ is finite.
The infinite case easily follows given boundedness.
Given a matroid $M$, we let $r_M$ denote its rank function.
Note that $M = \mathcal{M}(r_M)$.
We now provide a more interesting example illustrating Definition~\ref{defn:matroidFromSubmodular}.


\begin{ex}\label{ex:rigidityIsDTMU}
    Let $M$ the graphic matroid of the complete graph on $n$ vertices.
    Then $\mathcal{M}(2r_M - 1)$ is the algebraic matroid of $\cm_n^2$.
    This was proven in \cite{lovasz1982generic}
    and we will use our main result to give a new proof of this.
\end{ex}

Another matroid construction we will require is the \emph{matroid union}.
Theorem~\ref{thm:unionFromSubmodular} below tells us that the matroid union is
a special case of Definition~\ref{defn:matroidFromSubmodular}.
It was originally proven by Pym and Perfect in a more general context \cite{pym1970submodular}.

\begin{defn}
    Let $M_1,\dots,M_k$ be matroids on a common ground set $E$.
    The \emph{union} of $M_1,\dots,M_k$, denoted $M_1 \vee \dots \vee M_k$,
    is the matroid where $I \subseteq E$ is independent if and only if $I = I_1 \cup \dots \cup I_k$
    where $I_i$ is independent in $M_i$.
\end{defn}

\begin{thm}[{\cite[Chapter 8.3, Theorem 2]{welsh2010matroid}}]\label{thm:unionFromSubmodular}
    If $f,g: 2^E \rightarrow \mathbb{Z}$ are submodular, monotone, and nonnegative,
    then $\mathcal{M}(f + g) = \mathcal{M}(f) \vee \mathcal{M}(g)$.
    In particular, if $M_1,\dots,M_k$ are matroids on a common ground set $E$, then
    $M_1 \vee \dots \vee M_k = \mathcal{M}(r_{M_1} + \dots + r_{M_k})$.
\end{thm}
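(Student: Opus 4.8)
The plan is to reduce the $k$-fold statement to the two-function one and then prove the latter by comparing rank functions. A finite sum of integer-valued functions that are submodular, monotone, and nonnegative (and, when $E$ is infinite, bounded, so that Definition~\ref{defn:matroidFromSubmodular} applies) is again of that type, so each $\mathcal{M}(r_{M_1}+\dots+r_{M_j})$ is well defined. Using $\mathcal{M}(r_M) = M$, the associativity of matroid union (immediate from the definition), and induction on $k$, one then has
\[
M_1 \vee \dots \vee M_k = \big(M_1 \vee \dots \vee M_{k-1}\big) \vee M_k = \mathcal{M}(r_{M_1}+\dots+r_{M_{k-1}}) \vee \mathcal{M}(r_{M_k}) = \mathcal{M}(r_{M_1}+\dots+r_{M_k}),
\]
where the last equality is the two-function case applied to $f = r_{M_1}+\dots+r_{M_{k-1}}$ and $g = r_{M_k}$. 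So everything reduces to $\mathcal{M}(f+g) = \mathcal{M}(f)\vee\mathcal{M}(g)$.

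One containment is quick and worth recording first. If $I = I_1 \cup I_2$ with $I_1$ independent in $\mathcal{M}(f)$ and $I_2$ independent in $\mathcal{M}(g)$ — and, discarding overlap, we may assume $I_1 \cap I_2 = \emptyset$ — then for every $I' \subseteq I$ the subsets $I'\cap I_1$ and $I'\setminus I_1 \subseteq I_2$ are independent in $\mathcal{M}(f)$ and $\mathcal{M}(g)$ respectively, so by monotonicity $|I'| = |I'\cap I_1| + |I'\setminus I_1| \le f(I'\cap I_1) + g(I'\setminus I_1) \le f(I') + g(I')$; hence $I$ is independent in $\mathcal{M}(f+g)$, i.e.\ $\mathcal{M}(f)\vee\mathcal{M}(g) \le \mathcal{M}(f+g)$ in the weak order. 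For the reverse — the substantive direction — since both sides are matroids of finite rank it suffices to show they have the same rank function. Here I would invoke Edmonds' rank formula $r_{\mathcal{M}(h)}(X) = \min_{B\subseteq X}\big(|X\setminus B| + h(B)\big)$, valid for any $h$ as in Definition~\ref{defn:matroidFromSubmodular}, together with the Nash--Williams matroid union formula $r_{M\vee N}(X) = \min_{A\subseteq X}\big(|X\setminus A| + r_M(A) + r_N(A)\big)$. Substituting the first into the second reduces the claimed equality of rank functions to the purely arithmetic identity
\[
\min_{A\subseteq X}\Big(|X\setminus A| + \min_{B_1\subseteq A}\big(|A\setminus B_1| + f(B_1)\big) + \min_{B_2\subseteq A}\big(|A\setminus B_2| + g(B_2)\big)\Big) = \min_{S\subseteq X}\big(|X\setminus S| + f(S) + g(S)\big),
\]
where ``$\le$'' follows by taking $A = B_1 = B_2 = S$ on the left, and ``$\ge$'' follows by taking $S = B_1 \cap B_2$ on the left and using monotonicity of $f,g$ together with the elementary counting inequality $|A\setminus(B_1\cap B_2)| \le |A\setminus B_1| + |A\setminus B_2|$.

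The displayed identity is routine, so the genuine content of the theorem lives entirely in the two classical rank formulas — in particular in the Nash--Williams matroid union theorem — and I expect that to be the main obstacle. If one does not wish to cite it, one must reprove it, which is most naturally done by induction on $|I|$: given $I$ independent in $\mathcal{M}(f+g)$ (necessarily finite, by boundedness), delete an element $e$, decompose $I - e = I_1 \sqcup I_2$ by the inductive hypothesis, and try to add $e$ to one side. If $e$ cannot be added to either, one chases alternating chains of \emph{tight} sets — those $C\ni e$ with $C\subseteq I_1+e$ and $|C|=f(C)+1$, respectively $C\subseteq I_2+e$ and $|C|=g(C)+1$, which are closed under union and intersection by submodularity — recoloring elements between $I_1$ and $I_2$ until the two tight sets meet only in $e$, at which point their union, enlarged via monotonicity, violates a defining inequality of $\mathcal{M}(f+g)$, a contradiction. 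Since the statement is classical (the cited Welsh reference), I would simply cite it rather than reproduce this argument.
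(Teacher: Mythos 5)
The paper offers no proof of this statement at all: it is quoted directly from Welsh's book (and attributed to Pym and Perfect), so your closing decision to cite rather than reprove is exactly what the paper does, and your reduction of the $k$-fold statement to the two-function case via associativity of $\vee$ is the standard (implicit) step. The extra content you supply --- the easy containment $\mathcal{M}(f)\vee\mathcal{M}(g)\preceq\mathcal{M}(f+g)$ and the rank-function comparison --- is a correct outline of the classical argument, and your min-min identity checks out (the $\ge$ direction via $S=B_1\cap B_2$, monotonicity, and $|A\setminus(B_1\cap B_2)|\le|A\setminus B_1|+|A\setminus B_2|$ is right). One technical wrinkle to be aware of if you ever write this out in full: Edmonds' rank formula as you state it, $r_{\mathcal{M}(h)}(X)=\min_{B\subseteq X}\bigl(|X\setminus B|+h(B)\bigr)$, is only valid when $h(\emptyset)=0$; for merely nonnegative $h$ (which is all the theorem assumes) one must write $r_{\mathcal{M}(h)}(X)=\min\bigl\{|X|,\min_{\emptyset\neq B\subseteq X}\bigl(|X\setminus B|+h(B)\bigr)\bigr\}$, since e.g.\ a constant function $h\equiv 5$ would otherwise give the wrong rank. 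The extra candidate $|X|$ appears on both sides of your identity and the argument survives, but the formula as written is not literally the one you need. This caveat is moot for the ``in particular'' clause, where $f$ and $g$ are matroid rank functions and vanish on $\emptyset$, which is the only case the paper actually uses.
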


\subsection{Hadamard products of varieties}
Hadamard products of varieties were introduced in \cite{cueto2010geometry,cueto2010implicitization}
to study the algebraic geometry of restricted Boltzmann Machines.
Their theory was further developed in \cite{bocci2017hadamard,bocci2016hadamard}
with particular attention to linear spaces.
Hadamard products have since become of more fundamental interest in algebraic geometry
\cite{bocci2018hilbert,calussi2018hadamard,carlini2019hadamard,friedenberg2017minkowski}.

\begin{defn}
	The \emph{Hadamard product} of two points $x,y \in \mathbb{K}^E$,
	denoted $x \star y$, is the point $z \in \mathbb{K}^E$ such that $z_e = x_ey_e$ for all $e \in E$.
	The Hadamard product of varieties $U,V \subseteq \mathbb{K}^E$ is
    \[
    	U \star V := \overline{\{x\star y : x \in U, y \in V\}}.
    \]
\end{defn}

\begin{ex}\label{ex:cayleyMengerHadamard}
    Let $L \subseteq \mathbb{C}^{E(K_n)}$ be the linear space parameterized as $d_{uv} = t_u - t_v$.
    The algebraic matroid of $L$ is the graphic matroid of $K_n$.
    We will show that $\cm^2_n = L \star L$.
    This seemingly innocuous observation unlocks some big theorems from tropical geometry for use in planar rigidity.
    The usual way to parameterize $\cm_n^2$, as in Definition~\ref{defn:cayleyMenger}, is the following
    \[
        z_{ij} = (x_i-x_j)^2 + (y_i-y_j)^2.
    \]
    As noted in~\cite{capco2018number}, applying the following change of variables
    \[
    	x_i \mapsto \frac{t_i + s_i}{2}
        \qquad y_i \mapsto \frac{t_i - s_i}{2\sqrt{-1}}
    \]
    yields the following
    \begin{align*}
        z_{ij} &= \left(\frac{t_i + s_i-t_j-s_j}{2}\right)^2 + \left(\frac{t_i - s_i-t_j+s_j}{2\sqrt{-1}}\right)^2 \\
        &= \frac{1}{4}\left(t_i^2+s_i^2+t_j^2+s_j^2+2t_is_i+2t_js_j-2t_is_j-2t_js_i -2t_it_j-2s_is_j\right.
        \\&\qquad\qquad\left.-t_i^2-s_i^2-t_j^2-s_j^2 + 2t_it_j+2s_is_j-2t_is_j-2t_js_i+2t_is_i+2t_js_j\right)\\
        &=t_is_i+t_js_j-t_is_j-t_js_i\\
        &=(t_i-t_j)(s_i-s_j)
    \end{align*}
    and therefore $\cm^2_n = L \star L$.
    Theorem~\ref{thm:mainResult}, says that this is responsible for Example~\ref{ex:rigidityIsDTMU}.
\end{ex}

The observations in Example~\ref{ex:cayleyMengerHadamard} are fundamental so we summarize them below.

\begin{prop}\label{prop:cmHadamardProduct}
    For $d = 2$, the Cayley-Menger variety $\cm^2_n$ is a Hadamard product of two linear spaces. In particular,
    if $L \subseteq \mathbb{C}^{E(K_n)}$ is parameterized as $d_{uv} = t_u - t_v$, then
    \[
        \cm^2_n = L\star L.
    \]
    Moreover, the algebraic matroid of $L$ is the graphic matroid of $K_n$.
\end{prop}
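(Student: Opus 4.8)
The plan is to repackage the algebra already performed in Example~\ref{ex:cayleyMengerHadamard} and to add a short, standard verification of the matroid assertion.

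First I would dispose of a bookkeeping point. Although $\cm_n^2$ is defined in Definition~\ref{defn:cayleyMenger} as the Zariski closure of $D^2_n\big((\mathbb{R}^2)^n\big)$, this coincides with the Zariski closure of the image of the complexified map $D^2_n\colon(\mathbb{C}^2)^n\to\mathbb{C}^{E(K_n)}$: since $(\mathbb{R}^2)^n$ is Zariski dense in $(\mathbb{C}^2)^n$, if $Z$ denotes the Zariski closure of $D^2_n\big((\mathbb{R}^2)^n\big)$ then $(D^2_n)^{-1}(Z)$ is a Zariski-closed subset of $(\mathbb{C}^2)^n$ containing a Zariski-dense set, hence equals all of $(\mathbb{C}^2)^n$; therefore $D^2_n\big((\mathbb{C}^2)^n\big)\subseteq Z$, and the reverse inclusion of closures is immediate. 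This is what allows me to use the complex change of coordinates from Example~\ref{ex:cayleyMengerHadamard}.

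Next I would invoke that example directly. The substitution $x_i\mapsto\frac{t_i+s_i}{2}$, $y_i\mapsto\frac{t_i-s_i}{2\sqrt{-1}}$ is an invertible linear endomorphism of $(\mathbb{C}^2)^n\cong\mathbb{C}^{2n}$, so precomposing $D^2_n$ with it does not change the image; the displayed computation in Example~\ref{ex:cayleyMengerHadamard} evaluates the $uv$-th coordinate of the composite to $(t_u-t_v)(s_u-s_v)$. Hence
\[
    \cm_n^2=\overline{\big\{\big((t_u-t_v)(s_u-s_v)\big)_{uv}\ :\ t,s\in\mathbb{C}^n\big\}}.
\]
On the other hand, $L$ is the image of the linear map $t\mapsto(t_u-t_v)_{uv}$ and hence is already a linear subspace, so $\{x\star y:x,y\in L\}$ is exactly $\big\{\big((t_u-t_v)(s_u-s_v)\big)_{uv}:t,s\in\mathbb{C}^n\big\}$, whose Zariski closure is $L\star L$ by definition. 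Comparing the two descriptions gives $\cm_n^2=L\star L$, which also establishes the first sentence of the proposition.

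Finally, for the matroid of $L$: for any linear space the algebraic matroid of Definition~\ref{defn:algebraicMatroid} is the matroid of linear (in)dependence of the defining coordinate functionals, because $S\subseteq E(K_n)$ is independent in $\mathcal{M}(L)$ exactly when $\dim\pi_S(L)=|S|$, i.e.\ when the restrictions $\{d_{uv}|_L:uv\in S\}$ are linearly independent. Pulling $d_{uv}$ back through the parameterization gives the functional $t\mapsto t_u-t_v$ on $\mathbb{C}^n$, i.e.\ the vector $e_u-e_v$, so $\mathcal{M}(L)$ is the linear matroid of the configuration $\{e_u-e_v:uv\in E(K_n)\}$; this is the standard representation of the graphic matroid of $K_n$, a subset of these vectors being dependent precisely when the corresponding edges contain a cycle. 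I do not anticipate a genuine obstacle here: the substantive algebra is already done in Example~\ref{ex:cayleyMengerHadamard}, and the only step that truly needs care is the first one, namely observing that the real Zariski closure in the definition of $\cm_n^2$ agrees with the complex one so that the change of variables (which uses $\sqrt{-1}$) is legitimate, and that this change of variables is invertible and so preserves rather than enlarges the image of $D^2_n$.
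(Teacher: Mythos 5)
Your proposal is correct and takes essentially the same route as the paper, whose entire justification for this proposition is the change-of-variables computation in Example~\ref{ex:cayleyMengerHadamard}. The only additions are your careful treatment of the real-versus-complex Zariski closure and the standard verification that $\mathcal{M}(L)$ is the graphic matroid via the vectors $e_u - e_v$, both of which the paper takes for granted; these are welcome but do not change the argument.
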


\subsection{Tropical geometry}
Given a variety $V \subseteq \mathbb{C}^E$, let $I(V)$ denote the ideal
of the polynomial ring $\mathbb{C}[x_e : e \in E]$ consisting of the polynomials $f$
such that $f(x) = 0$ for all $x \in V$.
The ideal of $\mathbb{C}[x_e : e \in E]$ generated by polynomials $f_1,\dots,f_r$
will be denoted $\langle f_1,\dots,f_r\rangle$.
Given $\alpha \in \mathbb{Z}^E$,
the shorthand $\bfx^\alpha$ denotes the monomial $\prod_{e \in E} x_e^{\alpha_e}$.

\begin{defn}\label{defn:tropicalizaton}
    Let $E$ be a finite set,
    let $f = \sum_{\alpha \in \mathcal{J}} c_\alpha \bfx^{\alpha} \in \mathbb{C}[x_e : e \in E]$,
    and let $\omega \in \mathbb{R}^E$.
    The \emph{initial form of $f$ with respect to $w$} is
    \[
        \initial_\omega f := \sum_{\alpha \in
        \argmax_{\beta \in \mathcal{J}}\omega\cdot\beta} c_\alpha \bfx^\alpha.
    \]
    The \emph{initial ideal} of an ideal $I \subseteq \mathbb{C}[x_e : e \in E]$
    with respect to $\omega \in\mathbb{R}^n$ is
    \[
        \initial_\omega I := \langle \initial_\omega f : f \in I \rangle.
    \]
    The \emph{tropicalization} of a variety $V \in \mathbb{C}^E$ is
    \[
        \trop(V) := \{\omega \in \mathbb{R}^n : \initial_\omega I(V) \textnormal{ contains no monomials}\}.
    \]
\end{defn}
The tropicalization of a complex variety is a polyhedral fan.
Perhaps the easiest way to see this is to note that it can be obtained from the Gr\"obner fan of $I(V)$
by removing the interiors of all cones corresponding to initial ideals that contain monomials.
When $V$ is a hypersurface, its tropicalization is the polyhedral fan consisting of the codimension-one
cones in the normal fan to the Newton polytope of the generator of the principal ideal $I(V)$.

It is tempting to conclude that when $I(V) = \langle f_1,\dots, f_r\rangle$,
\begin{equation}\label{eq:tropicalBasis}
    \trop(V) = \bigcap_{i = 1}^r \{\omega \in \mathbb{R}^n : \initial_\omega f_i \textnormal{ has no monomials}\}
\end{equation}
but this is in general false.
A generating set $f_1,\dots,f_r$ of $I(V)$ that does satisfy \eqref{eq:tropicalBasis}
is called a \emph{tropical basis}.
It was shown in \cite{bogart2007computing} that a tropical basis exists for every variety $V \subseteq\mathbb{C}^E$
(finite $E$).
Tropicalization preserves a lot of information about a variety.
In particular, it preserves the algebraic matroid structure.
The following lemma of Yu makes this precise.

\begin{lemma}[{\cite{yu2017algebraic}}]\label{lemma:tropPreservesAlgMatroid}
    Let $E$ be a finite set, let $V \subseteq \mathbb{C}^E$, and let $S \subseteq E$.
    Then $\dim(\pi_S(V)) = \dim(\pi_S(\trop(V)))$.
\end{lemma}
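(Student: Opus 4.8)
The plan is to identify $\pi_S(\trop(V))$ with the tropicalization of $W := \overline{\pi_S(V)}$ and then to quote the Structure Theorem of tropical geometry. First I would reduce to the case that $V$ is irreducible: tropicalization, Zariski closure, and the coordinate projection $\pi_S$ all commute with finite unions, and the dimension of a finite union is the largest of the dimensions, so the general statement follows from the irreducible case. (In every application in this paper $V$ is already irreducible, being the closure of the image of an affine space as in Definition~\ref{defn:algebraicMatroid}.) So assume $V$ is irreducible; then $W$ is irreducible as well.

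I would use two external inputs. (a) The Structure Theorem (Bieri--Groves): for every variety $U$ one has $\dim \trop(U) = \dim U$, and $\trop(U)$ is the support of a polyhedral complex of that dimension. (b) The Fundamental Theorem of tropical geometry, in the form: after base change to an algebraically closed nonarchimedean field $K$ whose valuation $\val$ has value group $\mathbb{R}$ and residue field $\mathbb{C}$ (for instance the Hahn series field $\mathbb{C}(\!(t^{\mathbb{R}})\!)$), one has $\trop(U) = \val(U(K))$ with $\val$ applied coordinatewise, and base change from the trivially valued $\mathbb{C}$ does not affect $\trop(U)$. Two elementary compatibilities then come into play: since $\val$ is coordinatewise, the linear map $\pi_S$ satisfies $\pi_S \circ \val = \val \circ \pi_S$ across the $\mathbb{C}^E$- and $\mathbb{C}^S$-sides; and $\pi_S(U(K)) = (\pi_S(U))(K)$ for any variety $U$, because over the algebraically closed field $K$ every $K$-point of the constructible set $\pi_S(U)$ lifts through $\pi_S$ to a $K$-point of $U$.

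Next I would establish $\pi_S(\trop(V)) \subseteq \trop(W)$ and locate exactly what is missing. The inclusion is
\[
    \pi_S(\trop(V)) = \pi_S(\val(V(K))) = \val(\pi_S(V(K))) \subseteq \val(W(K)) = \trop(W).
\]
By Chevalley's theorem $\pi_S(V)$ is constructible, and being Zariski dense in the irreducible variety $W$ it contains a nonempty, hence dense, Zariski-open $U_0 \subseteq W$; pick $g$ with $W \setminus U_0 \subseteq V(g)$ and $g \notin I(W)$ (possible unless $W \setminus U_0 = \emptyset$, a case in which $\pi_S(\trop(V)) = \trop(W)$ outright). Given $\omega \in \trop(W) = \val(W(K))$, write $\omega = \val(x)$ with $x \in W(K)$: either $x \in U_0(K) \subseteq (\pi_S(V))(K) = \pi_S(V(K))$, whence $\omega \in \val(\pi_S(V(K))) = \pi_S(\trop(V))$, or $x \in (W \cap V(g))(K)$, whence $\omega \in \trop(W \cap V(g))$. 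Therefore $\trop(W) \setminus \pi_S(\trop(V)) \subseteq \trop(W \cap V(g))$, and by (a) this last set has dimension $\dim(W \cap V(g)) < \dim W$, since $W$ is irreducible and $g \notin I(W)$.

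To finish I would chain
\[
    \dim \pi_S(V) = \dim W = \dim \trop(W) = \dim \pi_S(\trop(V)),
\]
the first equality because a variety and its Zariski closure have equal dimension, the second by (a), and the last from the two facts just proved: $\pi_S(\trop(V)) \subseteq \trop(W)$ gives ``$\le$'', and since $\trop(W)$ is a finite union of polyhedra one of which has dimension $\dim W$, while $\trop(W \cap V(g))$ is a finite union of polyhedra each of strictly smaller dimension, removing the latter from the former leaves a subset of $\pi_S(\trop(V))$ still of dimension $\dim W$, giving ``$\ge$''. The step I expect to be the main obstacle is the containment $\trop(W) \setminus \pi_S(\trop(V)) \subseteq \trop(W \cap V(g))$: one must keep track of how the non-closedness of $\pi_S(V)$ (merely Zariski dense in $W$) interacts with the non-closedness of $\val(V(K))$, and the device that makes it clean is to work over a field with value group all of $\mathbb{R}$, so that $\trop(\cdot) = \val(\cdot(K))$ holds exactly, together with the dimension bound for $\trop(W \cap V(g))$ supplied by the Structure Theorem.
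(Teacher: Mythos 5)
The paper does not actually prove this lemma --- it is quoted from Yu \cite{yu2017algebraic} without proof, and the proof there is essentially two lines: by the Sturmfels--Tevelev elimination theorem (the same reference \cite{sturmfels2007elimination} that the paper invokes for Lemma~\ref{lemma:HadamardProductMinkowskiSum}), tropicalization commutes with coordinate projections, so $\pi_S(\trop(V)) = \trop\bigl(\overline{\pi_S(V)}\bigr)$, and then Bieri--Groves gives $\dim\trop\bigl(\overline{\pi_S(V)}\bigr) = \dim\overline{\pi_S(V)} = \dim\pi_S(V)$. Your argument is a correct reconstruction of exactly the coordinate-projection case of Sturmfels--Tevelev from first principles: you realize $\trop$ as $\val$ of $K$-points over a field with value group all of $\mathbb{R}$, use Chevalley's theorem together with the lifting of $K$-points of a constructible image to show that $\pi_S(\trop(V))$ exhausts $\trop(W)$ outside the tropicalization of a proper closed subvariety of $W$, and then use the Bieri--Groves dimension theorem to see that this discrepancy cannot affect the dimension. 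What this buys is independence from the elimination theorem, at the cost of invoking the strong form of the Fundamental Theorem (surjectivity of $\val$ onto value-group points of the tropicalization, i.e.\ Kapranov/Payne); what the citation route buys is brevity. Both are legitimate, and your dimension bookkeeping at the end (a finite union of polyhedra containing a full-dimensional polyhedron minus a lower-dimensional polyhedral set must contain a polyhedron of full dimension) is sound.

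One caveat you should make explicit. Under the paper's Definition~\ref{defn:tropicalizaton}, $\trop(V)$ is really the tropicalization of $V\cap(\mathbb{C}^*)^E$: if $V$ lies in a coordinate hyperplane then $\trop(V)=\emptyset$ and the lemma is false as stated (e.g.\ $V=\{x_1=0\}\subseteq\mathbb{C}^2$, $S=\{2\}$), so the statement implicitly assumes $V$ irreducible and not contained in a coordinate hyperplane --- as holds in every application in the paper. Correspondingly, $\val$ only lands in $\mathbb{R}^E$ on torus points, so in your lifting step you must apply Chevalley to $\pi_S$ restricted to $V\cap(K^*)^E$: a lift $y\in V(K)$ of $x\in U_0(K)$ could a priori have a vanishing coordinate in $E\setminus S$, in which case $\val(y)\notin\trop(V)$. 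Replacing $\pi_S(V)$ by the (still constructible, still dense in $W$) image of $V\cap(\mathbb{C}^*)^E$ before choosing $U_0$ repairs this.
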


Lemma \ref{lemma:HadamardProductMinkowskiSum} below says that Hadamard products interact cleanly with tropicalization.
Recall that the Minkowski sum of sets $A,B\subseteq \mathbb{R}^E$
is the set $A + B := \{a + b : a \in A, b \in B\}$.

\begin{lemma}\label{lemma:HadamardProductMinkowskiSum}
    Let $E$ be a finite set and let
    $U,V \subseteq \mathbb{C}^E$ be irreducible varieties.
    Then $\trop(U\star V) = \trop(U) + \trop(V)$.
\end{lemma}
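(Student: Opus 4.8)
The plan is to reduce the statement to the known behavior of tropicalization under multiplication of varieties inside an algebraic torus, since the Hadamard product is, up to a standard diagonal trick, exactly such a product. Concretely, let $T = (\mathbb{C}^*)^E$ and consider the multiplication map $\mu: T \times T \to T$, $\mu(x,y) = x \star y$ (coordinate-wise product). One wants $\trop(\overline{\mu(U \times V) \cap T}) = \trop(U) + \trop(V)$, at least after intersecting $U, V$ with the torus. The key classical input is the Structure Theorem / fundamental theorem of tropical geometry together with the fact that for subvarieties $A, B$ of a torus, $\trop(A \cdot B) = \trop(A) + \trop(B)$: this follows because the group operation on $T$ tropicalizes to addition on $\mathbb{R}^E$, and tropicalization commutes with images under monomial (hence torus-homomorphism) maps in the appropriate sense — a point $w$ lies in $\trop(A \cdot B)$ iff it is the image of a point of $\trop(A \times B) = \trop(A) \times \trop(B)$ under the tropical multiplication $\mathbb{R}^E \times \mathbb{R}^E \to \mathbb{R}^E$, $(u,v) \mapsto u + v$, which is precisely Minkowski sum.

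First I would set up the torus reduction carefully. Define $U^\circ = U \cap T$ and $V^\circ = V \cap T$; since $U, V$ are irreducible, either $U^\circ$ is dense in $U$ or $U$ lies in a coordinate hyperplane, and similarly for $V$. The nicest path is to first prove the identity under the assumption $U = U^\circ$, $V = V^\circ$ (i.e.\ $U, V$ meet the torus densely), using the product-in-a-torus fact above; the tropicalization $\trop(U \star V)$ is then the image of $\trop(U) \times \trop(V)$ under $(u,v) \mapsto u+v$, which is $\trop(U) + \trop(V)$ by definition of Minkowski sum. Then I would handle the degenerate case where $U$ (say) is contained in some coordinate subspace $\{x_e = 0 : e \in S\}$: here both sides lose the coordinate directions indexed by $S$ in a compatible way — $U \star V$ is also contained in that coordinate subspace, and on the remaining coordinates one is again in the torus-dense situation — so the general statement follows by induction on the number of vanishing coordinates, or by a limiting argument. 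An alternative to the torus reduction: cite the fact that $\trop(\cdot)$ of the product variety $U \times V \subseteq \mathbb{C}^E \times \mathbb{C}^E$ is $\trop(U) \times \trop(V)$, and that the Hadamard product is the image of $U \times V$ under the bilinear map $(x,y) \mapsto x \star y$; tropicalizing a monomial map is linear, and here the map $(x,y)\mapsto (x_e y_e)_e$ tropicalizes to $(u,v)\mapsto (u_e+v_e)_e$, whose image of $\trop(U)\times\trop(V)$ is exactly $\trop(U)+\trop(V)$ — but one must be careful that taking the tropicalization of an image equals the image of the tropicalization, which again requires the map to respect the torus structure (it does, off the coordinate hyperplanes).

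The main obstacle I expect is precisely the interaction with the coordinate hyperplanes, i.e.\ the possibility that $U$ or $V$ (or points of $U \star V$ realizing a given $w$) have some coordinate equal to zero: tropicalization of images behaves well for torus-equivariant maps but the Hadamard product is not a self-map of a single torus, and ``$\trop$ of an image $=$ image of $\trop$'' can fail at the boundary. I would deal with this by working over the Puiseux series field (so that $\trop(V)$ is literally the set of coordinatewise valuations of points of $V$ with all coordinates nonzero), proving the $\supseteq$ inclusion by explicitly multiplying Puiseux-series points of $U$ and $V$ whose valuation vectors are prescribed, and the $\subseteq$ inclusion by noting any Puiseux point of $U\star V$ with nonzero coordinates is (generically, hence after a small perturbation within the variety) a product of nonzero-coordinate Puiseux points of $U$ and $V$; continuity/density of the torus part of an irreducible variety in its tropicalization (the Structure Theorem) closes the remaining gaps. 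Everything else — that $\trop(U \times V) = \trop(U) \times \trop(V)$, that valuation turns products into sums — is routine.
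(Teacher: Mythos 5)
Your proposal is correct and follows essentially the same route as the paper, which simply cites Theorem~1.1 of Sturmfels--Tevelev (``Elimination theory for tropical varieties''): the Hadamard product is the image of $U\times V$ under the coordinatewise multiplication map of tori, $\trop(U\times V)=\trop(U)\times\trop(V)$, and tropicalization commutes with images under monomial maps, whose tropicalization here is $(u,v)\mapsto u+v$. The torus-boundary and ``$\trop$ of image vs.\ image of $\trop$'' issues you flag are exactly what that cited theorem handles, so your sketch amounts to re-deriving it rather than taking a different path.
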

\begin{proof}
    This is an immediate consequence of Theorem 1.1 in \cite{sturmfels2007elimination}.
\end{proof}

Definition~\ref{defn:bergmanFan} below associates a polyhedral fan to each matroid $M$.
Proposition~\ref{prop:algMatroidOfLinearSpaceIsBergmanFanOfMatroid} tells us that if $M$
is the algebraic matroid of a linear space $L$,
then the polyhedral fan given in Definition~\ref{defn:bergmanFan} is the tropicalization of $L$.

\begin{defn}\label{defn:bergmanFan}
    Let $M$ be a matroid on ground set $E$.
    A \emph{flag of flats} of $M$ is a set of nested nonempty flats of $M$.
    Given $e \in M$ and a flag of flats,
    let $\mathcal{F}^e$ denote the minimal element of $\mathcal{F}$ containing $e$.
    For each flag of flats $\mathcal{F}$ of $M$, define
    \[
        K_\mathcal{F} := \{\omega \in \mathbb{R}^E: \omega_e \le \omega_f \textnormal{ when }
        \mathcal{F}^e \subsetneq \mathcal{F}^f \textnormal{ and }
        \omega_e = \omega_f \textnormal{ when } \mathcal{F}^e = \mathcal{F}^f\}.
    \]
    The \emph{Bergman fan} of a loopless matroid $M$, denoted $\berg(M)$, is defined to be
    \[
        \berg(M):= \bigcup_{\mathcal{F}} K_\mathcal{F}
    \]
    where the union is over all flags of flats of $M$.
\end{defn}

\begin{prop}[{\cite[Ch. 9]{sturmfels2002solving}}]\label{prop:algMatroidOfLinearSpaceIsBergmanFanOfMatroid}
    Let $E$ be a finite set and let $L \subseteq \mathbb{C}^E$ be a linear space not contained in a coordinate hyperplane.
    Then $\trop(L) = \berg(\mathcal{M}(L))$.
\end{prop}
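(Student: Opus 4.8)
The plan is to compute $\trop(L)$ directly from the defining ideal of the linear space and match the result cell-by-cell with the Bergman fan. First I would fix a parametrization: since $L$ is a linear subspace of $\mathbb{C}^E$, it is cut out by linear forms, and because $L$ is not contained in a coordinate hyperplane, its ideal $I(L)$ is generated by the linear forms vanishing on $L$ together with, in a suitable sense, the linear syzygies among the coordinates; more precisely, for a linear space the \emph{circuits} of $\mathcal{M}(L)$ correspond to the supports of a set of linear forms spanning $I(L)\cap \{\text{degree }1\}$, and these linear forms already form a tropical basis for $L$. Thus I would first recall (or reprove) the standard fact that for a linear space the linear forms in its ideal are a tropical basis, so that $\trop(L)=\bigcap_{\ell}\{\omega: \initial_\omega\ell\text{ is not a monomial}\}$ where $\ell$ ranges over these linear forms. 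For a single linear form $\ell=\sum_{e\in C}c_e x_e$ with $\supp(\ell)=C$ a circuit, $\initial_\omega\ell$ fails to be a monomial exactly when the maximum of $\omega_e$ over $e\in C$ is attained at least twice.

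Next I would translate the combinatorial side. Given a flag of flats $\mathcal{F}$ and the cone $K_\mathcal{F}$, I want to show $K_\mathcal{F}\subseteq\trop(L)$ and that every point of $\trop(L)$ lies in some $K_\mathcal{F}$. For the first inclusion: take $\omega\in K_\mathcal{F}$ and any circuit $C$; let $e\in C$ achieve the maximum value of $\omega$ on $C$, and let $F=\mathcal{F}^e$ be the smallest flat in the flag containing $e$. I would argue that $C$ is not contained in $\cl_{\mathcal{M}(L)}(\{f\in C:\mathcal{F}^f\subsetneq F\})$ — because a circuit cannot have all but one element spanning the last element when that last element lies strictly outside — hence there is a second element $e'\in C$ with $\mathcal{F}^{e'}=F$, and then $\omega_{e'}=\omega_e$ by the defining equalities of $K_\mathcal{F}$, so the max is attained twice. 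This shows $\initial_\omega\ell$ is not a monomial for every linear form, hence $\omega\in\trop(L)$. For the reverse direction, given $\omega\in\trop(L)$ I would build a flag: order the distinct values of $\omega$, and for each threshold $t$ let $S_t=\{e:\omega_e\ge t\}$; the condition that every circuit attains its max at least twice forces $\cl_{\mathcal{M}(L)}(S_t)$ to again be a set on which $\omega$ is $\ge t$ (a circuit straddling the boundary would give a unique maximizer), so the flats $\cl(S_t)$ form a flag $\mathcal{F}$ with $\omega\in K_\mathcal{F}$.

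The main obstacle, and the step I would spend the most care on, is the tropical-basis claim: justifying that the linear forms in $I(L)$ suffice to compute $\trop(L)$, i.e.\ that no higher-degree element of $I(L)$ can witness a monomial in $\initial_\omega I(L)$ that the linear forms miss. This is classical — it is exactly the content of the cited Chapter~9 of \cite{sturmfels2002solving} and is also where the matroid circuit axioms enter — but it requires either invoking that reference's Gr\"obner-theoretic argument or reproving that the Gr\"obner degeneration of a linear ideal with respect to any $\omega$ is again generated in degree one, so that $\initial_\omega I(L)$ contains a monomial iff some $\initial_\omega\ell$ is a monomial. Once that reduction is in hand, the two inclusions above are purely a matter of matching the "max attained twice on every circuit" condition with the closure-operator description of flags of flats, which is routine matroid bookkeeping. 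I would therefore structure the write-up as: (1) reduce to linear forms via the degree-one Gr\"obner basis fact; (2) restate $\trop(L)$ as the "every circuit's $\omega$-max is non-unique" locus; (3) prove $\bigcup_\mathcal{F}K_\mathcal{F}$ equals that locus by the two constructions above.
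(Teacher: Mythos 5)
The paper does not prove this proposition at all --- it is quoted from \cite[Ch.~9]{sturmfels2002solving} --- so you are supplying an argument where the paper supplies a citation. Your overall architecture (reduce to the circuit forms via the degree-one tropical basis fact, restate $\trop(L)$ as the ``max attained at least twice on every circuit'' locus, then match that locus with $\bigcup_{\mathcal F}K_{\mathcal F}$) is the standard Ardila--Klivans route and is the right one; deferring the tropical-basis step to the reference is reasonable, and your forward inclusion $K_{\mathcal F}\subseteq\trop(L)$ is correct: if only one $e\in C$ had $\mathcal F^e$ maximal among $\{\mathcal F^f: f\in C\}$, then $C\setminus\{e\}$ would lie in a flat of the flag strictly below $\mathcal F^e$, forcing $e$ into that flat and contradicting minimality of $\mathcal F^e$.

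The reverse inclusion, however, is oriented the wrong way for the conventions actually in force. The paper defines initial forms via $\argmax$ and defines $K_{\mathcal F}$ by $\omega_e\le\omega_f$ when $\mathcal F^e\subsetneq\mathcal F^f$, so small flats carry \emph{small} weights. Under these conventions it is the \emph{sublevel} sets $A_t=\{e:\omega_e\le t\}$ that are forced to be flats: if $f\in\cl(A_t)$ with $\omega_f>t$, a circuit $C$ with $f\in C\subseteq A_t\cup\{f\}$ has $f$ as its unique $\omega$-maximizer, contradicting $\omega\in\trop(L)$. Your superlevel sets $S_t=\{e:\omega_e\ge t\}$ need not be closed, and your parenthetical justification fails there: a circuit witnessing $f\in\cl(S_t)$ with $\omega_f<t$ attains its maximum inside $S_t\cap C$, possibly at several elements, so no contradiction arises. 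Concretely, for $U_{2,3}$ (a generic plane in $\mathbb{C}^3$, single circuit $\{a,b,c\}$) and $\omega=(1,1,0)\in\trop(L)$, one has $S_1=\{a,b\}$ but $\cl(S_1)=\{a,b,c\}$. Moreover, even where the $S_t$ happen to be flats, the flag they generate assigns small flats to elements of \emph{large} weight, so $\omega$ would satisfy the reversed inequalities rather than those defining $K_{\mathcal F}$. The repair is purely a matter of flipping the level sets: take the distinct values $t_1<\dots<t_k$ of $\omega$, set $F_j=\{e:\omega_e\le t_j\}$ (closed by the unique-maximizer argument, with $F_k=E$), and check directly that $\mathcal F=\{F_1\subsetneq\dots\subsetneq F_k\}$ puts $\omega$ in $K_{\mathcal F}$.
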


\section{The algebraic matroid of a Hadamard product of linear spaces}\label{section:mainResult}
We begin this section by stating Theorem~\ref{thm:mainResult}, its main result.
It gives a combinatorial description of the algebraic matroid of a Hadamard product of two linear spaces in terms of the algebraic matroids of each linear space.

\begin{thm}\label{thm:mainResult}
    Let $U,V \subseteq \mathbb{C}^E$ be finite-dimensional linear subspaces and define $M := \mathcal{M}(U)$ and $N:= \mathcal{M}(V)$.
    If neither $U$ nor $V$ is contained in a coordinate hyperplane, then
    \[
        \mathcal{M}(U\star V) = \mathcal{M}(r_M + r_N - 1).
    \]
\end{thm}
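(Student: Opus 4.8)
The plan is to show that $\mathcal M(U\star V)$ and $\mathcal M(r_M+r_N-1)$ have the same independent sets; since both are matroids on $E$ (the function $r_M+r_N-1$ is submodular, monotone and bounded, so Definition~\ref{defn:matroidFromSubmodular} applies), this proves they coincide. Write $W:=U\star V$. One inclusion is elementary and needs no tropical input: I would use the dimension bound $\dim(A\star B)\le \dim A+\dim B-1$ valid for irreducible varieties $A,B\subseteq\mathbb C^F$ neither of which lies in a coordinate hyperplane, which holds because the $\mathbb C^\times$-action $(\lambda,a,b)\mapsto(\lambda a,\lambda^{-1}b)$ fixes Hadamard products, so the generic fibre of $A\times B\to A\star B$ is positive-dimensional. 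Applying this to $A=\pi_T(U)$ and $B=\pi_T(V)$ (still not in coordinate hyperplanes, since $U,V$ are not), and using $\pi_T(W)=\overline{\pi_T(U)\star\pi_T(V)}$ together with $\dim\pi_T(U)=r_M(T)$ and $\dim\pi_T(V)=r_N(T)$, gives $\dim\pi_T(W)\le r_M(T)+r_N(T)-1$ for every nonempty $T\subseteq E$. If $I$ is independent in $\mathcal M(W)$ then so is every $I'\subseteq I$, whence $|I'|=\dim\pi_{I'}(W)\le r_M(I')+r_N(I')-1$, which is exactly the condition defining independence in $\mathcal M(r_M+r_N-1)$.

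For the reverse inclusion, let $I$ be independent in $\mathcal M(r_M+r_N-1)$. Replacing $E$ by $I$ and $U,V$ by $\pi_I(U),\pi_I(V)$ (note $\mathcal M(\pi_I U)=M|_I$ and $\mathcal M(\pi_I V)=N|_I$, both loopless, and the hypothesis on subsets of $I$ becomes the analogous hypothesis on the new ground set), it suffices to prove: \emph{if $|T|\le r_M(T)+r_N(T)-1$ for every nonempty $T\subseteq E$, then $\dim W=|E|$.} By Lemma~\ref{lemma:tropPreservesAlgMatroid} (with $S=E$), Lemma~\ref{lemma:HadamardProductMinkowskiSum} (applicable since $U,V$ are irreducible), and Proposition~\ref{prop:algMatroidOfLinearSpaceIsBergmanFanOfMatroid}, we have $\dim W=\dim\trop(W)=\dim\bigl(\trop(U)+\trop(V)\bigr)=\dim\bigl(\berg(M)+\berg(N)\bigr)$, so it is enough to show this Minkowski sum of fans is $|E|$-dimensional.

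The dimension of a Minkowski sum of polyhedral fans is the maximum, over pairs of cones one from each, of $\dim$ of the sum of their linear spans, and for $\berg(M)$ this is attained on the maximal cones $K_{\mathcal F}$ indexed by complete flags of flats $\mathcal F=(F_1\subsetneq\dots\subsetneq F_{r_M}=E)$, where $\operatorname{span}(K_{\mathcal F})=\{\omega\in\mathbb R^E:\omega\text{ is constant on each block of }P_{\mathcal F}\}$ and $P_{\mathcal F}$ is the partition $\{F_1,F_2\setminus F_1,\dots,F_{r_M}\setminus F_{r_M-1}\}$ of $E$. Since $\dim\bigl(\{\omega\text{ const.\ on }P\}+\{\omega\text{ const.\ on }Q\}\bigr)=|P|+|Q|-|P\vee Q|$ for the join $P\vee Q$ in the partition lattice, we get $\dim(\berg(M)+\berg(N))=r_M+r_N-\min_{\mathcal F,\mathcal G}|P_{\mathcal F}\vee P_{\mathcal G}|$, the minimum over complete flags $\mathcal F$ of $M$ and $\mathcal G$ of $N$. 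A spanning-forest count shows $|P_{\mathcal F}\vee P_{\mathcal G}|\ge r_M+r_N-|E|$ always, so the whole theorem reduces to the combinatorial claim: \emph{under the count hypothesis there exist complete flags of flats $\mathcal F$ of $M$ and $\mathcal G$ of $N$ with $|P_{\mathcal F}\vee P_{\mathcal G}|=r_M+r_N-|E|$} (forcing $\dim W=|E|$).

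I expect this flag construction to be the main obstacle. I would prove it by induction on $|E|$. When $|E|\ge2$ every singleton is tight ($|\{e\}|=1=r_M(\{e\})+r_N(\{e\})-1$, as $M,N$ are loopless), and $\cl_M$ and $\cl_N$ send tight sets to tight sets, so there is a nonempty tight set $T$ that is a flat of both $M$ and $N$; if $T\neq E$ one assembles $\mathcal F$ from a complete flag of $M|_T$ followed by one of $M/T$ (and $\mathcal G$ likewise), applies induction to the two restrictions and the two contractions, and splices the resulting partitions. The delicate point is that the count hypothesis is \emph{not} inherited verbatim by $M/T,N/T$: tightness of $T$ only yields $|U|\le r_{M/T}(U)+r_{N/T}(U)$, losing the ``$-1$''. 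Overcoming this seems to require either strengthening the induction to the exact identity $\dim(\berg(M)+\berg(N))=\min_{\emptyset\neq T}\bigl(|E\setminus T|+r_M(T)+r_N(T)-1\bigr)$ for \emph{all} loopless $M,N$ (whose right-hand side behaves well under contraction, via the matroid-union rank formula underlying Theorem~\ref{thm:unionFromSubmodular}), or else handling the residual case ``$E$ itself tight'' — where $P_{\mathcal F}\vee P_{\mathcal G}$ must be a single block — by a direct exchange argument pairing up a basis of $M$ with a basis of $N$. Granting the flag construction, combining it with the first inclusion shows $\mathcal M(U\star V)$ and $\mathcal M(r_M+r_N-1)$ have the same independent sets, hence are equal.
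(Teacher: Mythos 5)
Your reduction is sound and, up to translation, identical to the paper's: the easy inclusion via the $\mathbb{C}^\times$-action is a nice elementary replacement for one half of the paper's combinatorial lemma, and your partition-lattice computation $\dim(\berg(M)+\berg(N))=r_M+r_N-\min_{\mathcal F,\mathcal G}|P_{\mathcal F}\vee P_{\mathcal G}|$ is exactly the paper's Lemma~\ref{lemma:graphFromConeInMinkowskiSumOfBergmanFans} in disguise (blocks of $P_{\mathcal F}\vee P_{\mathcal G}$ are the connected components of the bipartite graph $H_{\mathcal F,\mathcal G}^E$, and your target $|P_{\mathcal F}\vee P_{\mathcal G}|=r_M+r_N-|E|$ says $H_{\mathcal F,\mathcal G}^E$ is a forest). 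But the proof stops at precisely the point where the real work begins. The existence of flags $\mathcal F,\mathcal G$ achieving this bound under the hypothesis $|T|\le r_M(T)+r_N(T)-1$ is the entire technical content of the theorem (the second half of Lemma~\ref{lemma:independentIffCycleFree} in the paper), and you do not prove it: you propose an induction on a common tight flat, correctly identify that the count condition degrades under contraction (tightness of $T$ only gives $|S|\le r_{M/T}(S)+r_{N/T}(S)$ for the contractions, losing the $-1$), and then offer two speculative ways around this without carrying either out. As written, this is a gap, not a proof.

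For comparison, the paper avoids your contraction difficulty entirely. It first uses the matroid union theorem (Theorem~\ref{thm:unionFromSubmodular}): since $I$ is independent in $\mathcal M(r_M+r_N)$, write $I=I_1\cup I_2$ with $I_i$ independent in $M,N$ respectively, and augment so that $I_1,I_2$ each span $I$; the strict inequality $|I|\le r_M(I)+r_N(I)-1$ then forces $|I_1\cap I_2|\ge 1$. The flags are built as the closures of initial segments of orderings of $I_1$ and $I_2$, and the orderings are produced by a top-down greedy procedure seeded at an element of $I_1\cap I_2$; the recursion bottoms out by reapplying the same count argument to the unindexed elements $I_1',I_2'$, which satisfy $I_1'\subseteq\cl_N(I_2')$ and $I_2'\subseteq\cl_M(I_1')$ and hence are either empty or again share an element. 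If you want to salvage your sketch, your first alternative (strengthening the induction to the exact min-formula for $\dim(\berg(M)+\berg(N))$ for all loopless $M,N$) is the more promising route, but it would need to be proved, not merely asserted to "behave well under contraction."
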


Before proving Theorem~\ref{thm:mainResult},
we use it to give a short proof of the matroid-theoretic statement of Pollaczek-Geiringer's characterization of generic rigidity in the plane.

\begin{proof}[Proof of Theorem~\ref{thm:lamansMatroid}]
    Let $M$ denote the graphic matroid of the complete graph $K_n$ on vertex set $\{1,\dots,n\}$.
    For each set $S\subseteq E(K_n)$, consider the graph whose vertex set consists of the vertices incident to an edge of $S$ and whose edge set is $S$, and let $v(S)$ and $c(S)$ denote the number of vertices and connected components of this graph.
    Recall that the rank function of $M$ is $r_M(S) = v(S)-c(S)$.
    By Proposition~\ref{prop:cmHadamardProduct},
    $\cm_n^2$ is the Hadamard product of two linear spaces whose matroid is $M$.
    Theorem~\ref{thm:mainResult} therefore implies that $\mathcal{M}(\cm_n^2) = \mathcal{M}(2r_M-1)$.
    If $I$ is independent in $\mathcal{M}(2r_M-1)$,
    then $|I'| \le 2r_M(I')-1 = 2v(I') - 2c(I')-1 \le 2v(I') - 3$ for all $I' \subseteq I$.
    If $I$ is dependent in $\mathcal{M}(2r_M-1)$,
    let $I' \subseteq I$ be such that $|I'| > 2r_M(I') - 1$.
    Then there exists some $I'' \subseteq I'$ such that the graph on edge set $I''$
    is connected and $|I''| > 2r_M(I'')-1 = 2v(I'') -3$.
\end{proof}

We now turn to proving Theorem~\ref{thm:mainResult}.
When $E$ is finite, Lemmas~\ref{lemma:HadamardProductMinkowskiSum} and~\ref{lemma:tropPreservesAlgMatroid}, along with Proposition~\ref{prop:algMatroidOfLinearSpaceIsBergmanFanOfMatroid}, tell us that the algebraic matroid of a Hadamard product of linear spaces is determined by the dimensions of the images of the coordinate projections of $\berg(M) + \berg(N)$, where $M$ and $N$ are the matroids of each linear space.
To this end, we introduce the following definition that, via Lemma~\ref{lemma:graphFromConeInMinkowskiSumOfBergmanFans} below, reduces these dimension computations to an auxiliary combinatorics problem,
which via Lemma~\ref{lemma:independentIffCycleFree}, is a bridge to independence in $\mathcal{M}(r_M + r_N-1)$.

\begin{defn}
    Let $M,N$ be matroids on a common ground set $E$.
    Let $\mathcal{F},\mathcal{G}$ be flags of flats for $M$ and $N$, respectively.
    For each subset $S \subseteq E$,
    let $H_{\mathcal{F},\mathcal{G}}^S$ be the bipartite graph on partite sets
    $\mathcal{F},\mathcal{G}$ with an edge for each $e \in S$
    connecting $\mathcal{F}^e$ to $\mathcal{G}^e$.
\end{defn}

When $M$ and $N$ have finite rank, $H_{\mathcal{F},\mathcal{G}}^S$ will have finitely many vertices,
though it could have infinitely many edges.

\begin{lemma}\label{lemma:graphFromConeInMinkowskiSumOfBergmanFans}
    Let $M$ and $N$ be matroids of finite rank on a common ground set $E$ and let $S \subseteq E$.
    Then $\dim(\pi_S(\berg(M) + \berg(N)))$ is equal to the maximum
    rank of the graphic matroid of $H_{\mathcal{F},\mathcal{G}}^S$
    as $\mathcal{F}$ and $\mathcal{G}$ respectively
    range over flags of flats of $M$ and $N$.
\end{lemma}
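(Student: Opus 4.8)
The plan is to compute $\dim(\pi_S(\berg(M)+\berg(N)))$ directly from the description of the Bergman fan in Definition~\ref{defn:bergmanFan}. Since tropicalizations (and Minkowski sums of fans) are finite unions of polyhedral cones, and since the dimension of a coordinate projection of a fan is the maximum over its maximal cones of the dimension of the projection of that cone, the first step is to reduce to a single summand. Namely, every cone of $\berg(M)+\berg(N)$ is of the form $K_{\mathcal F}+K_{\mathcal G}$ for some flag of flats $\mathcal F$ of $M$ and $\mathcal G$ of $N$, so
\[
    \dim(\pi_S(\berg(M)+\berg(N))) = \max_{\mathcal F,\mathcal G} \dim\big(\pi_S(K_{\mathcal F}+K_{\mathcal G})\big),
\]
and since $\pi_S$ is linear and the projection of a sum of sets is the sum of their projections, $\pi_S(K_{\mathcal F}+K_{\mathcal G}) = \pi_S(K_{\mathcal F}) + \pi_S(K_{\mathcal G})$, whose dimension is $\dim(\pi_S(K_{\mathcal F}) + \pi_S(K_{\mathcal G}))$. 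So I am reduced to showing that for fixed flags $\mathcal F,\mathcal G$, this last dimension equals the rank of the graphic matroid of $H_{\mathcal F,\mathcal G}^S$.

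The key observation is that $\pi_S(K_{\mathcal F})$ is (essentially) a product of ``coordinate-blocks'' dictated by the fibers of the map $e \mapsto \mathcal F^e$: the defining conditions of $K_{\mathcal F}$ force $\omega_e = \omega_f$ whenever $\mathcal F^e = \mathcal F^f$, and impose only inequalities between distinct blocks, so $\pi_S(K_{\mathcal F})$ has full dimension equal to the number of distinct values $\mathcal F^e$ as $e$ ranges over $S$, i.e.\ the number of vertices of $H_{\mathcal F,\mathcal G}^S$ lying in the partite set $\mathcal F$ that are actually hit by an edge. Concretely, coordinatize $\mathbb R^S$ by first passing to the quotient that identifies coordinates in the same $\mathcal F$-block; $\pi_S(K_{\mathcal F})$ is the preimage of a full-dimensional cone there, hence $\dim \pi_S(K_{\mathcal F})$ equals the number of $\mathcal F$-blocks meeting $S$, and likewise for $\mathcal G$. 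Now I want $\dim\big(\pi_S(K_{\mathcal F}) + \pi_S(K_{\mathcal G})\big)$. The sum of these two subspaces-up-to-a-cone is, as a linear span, the span of the two coordinate subspaces $W_{\mathcal F} := \{\omega : \omega_e = \omega_f \text{ when } \mathcal F^e=\mathcal F^f\}$ and $W_{\mathcal G}$ restricted to $S$; and a standard computation identifies $\dim(\pi_S W_{\mathcal F} + \pi_S W_{\mathcal G})$ with exactly the number of vertices minus the number of connected components of the bipartite graph $H_{\mathcal F,\mathcal G}^S$, which is precisely the rank of its graphic matroid. The intuition: each $\mathcal F$-block and each $\mathcal G$-block is a vertex, each $e\in S$ is an edge linking its two blocks, two coordinates $\omega_e,\omega_f$ are forced equal in $W_{\mathcal F}+W_{\mathcal G}$ if and only if $e,f$ lie in the same connected component of $H_{\mathcal F,\mathcal G}^S$, and so the number of degrees of freedom is the number of components — wait, rather the span has dimension (number of vertices) $-$ (number of components), matching $r$ of the cycle matroid. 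One must be slightly careful that we are taking spans of cones, not the cones themselves: but $K_{\mathcal F}$ is a full-dimensional cone inside $W_{\mathcal F}$ (it is cut out of $W_{\mathcal F}$ by finitely many inequalities and contains the lineality of $W_{\mathcal F}$ plus an open region), so $\dim \pi_S(K_{\mathcal F}) = \dim \pi_S(W_{\mathcal F})$, and similarly the Minkowski sum of the two cones spans $\pi_S(W_{\mathcal F}) + \pi_S(W_{\mathcal G})$, so no information is lost.

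Finally I take the maximum over $\mathcal F$ and $\mathcal G$, matching the right-hand side of the lemma. The main obstacle, and the step deserving the most care, is the linear-algebra identification $\dim\big(\pi_S(W_{\mathcal F}) + \pi_S(W_{\mathcal G})\big) = v(H_{\mathcal F,\mathcal G}^S) - c(H_{\mathcal F,\mathcal G}^S)$: one should argue it by viewing $\pi_S(W_{\mathcal F})^\perp$ and $\pi_S(W_{\mathcal G})^\perp$ as spanned by the ``difference'' vectors $\mathbf e_e - \mathbf e_f$ over edges sharing an $\mathcal F$-endpoint (resp.\ $\mathcal G$-endpoint), so that $\big(\pi_S(W_{\mathcal F}) + \pi_S(W_{\mathcal G})\big)^\perp$ is spanned by $\mathbf e_e - \mathbf e_f$ over $e,f$ in a common connected component of $H_{\mathcal F,\mathcal G}^S$, whose dimension is $|S| - c(H_{\mathcal F,\mathcal G}^S)$; subtracting from $|S|$ gives $|S| - (|S| - c) $... one then relates this back through $v - c$ using that the number of edges equals $|S|$ up to the parallel classes that the graphic matroid quotients out, which is exactly why the \emph{rank} of the graphic matroid (rather than $|S|$ itself) appears. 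A clean way to sidestep bookkeeping is to note the graphic matroid of $H_{\mathcal F,\mathcal G}^S$ is the matroid of linear independence of the vectors $\{\mathbf e_{\mathcal F^e} \oplus \mathbf e_{\mathcal G^e} : e\in S\}$ in $\mathbb R^{\mathcal F}\oplus\mathbb R^{\mathcal G}$, whose rank equals the dimension of the image of the linear map sending $\omega\in\mathbb R^S$ to $\big((\sum_{\mathcal F^e = F}\omega_e)_F, (\sum_{\mathcal G^e=G}\omega_e)_G\big)$ — and this image dimension is visibly $\dim(\pi_S(W_{\mathcal F})+\pi_S(W_{\mathcal G}))$ by rank–nullity applied to the dual of the projection, completing the proof.
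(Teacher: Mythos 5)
Your argument is correct and is essentially the paper's own proof: both reduce to the dimension of the coordinate projection of the linear hull of $K_{\mathcal F}+K_{\mathcal G}$ and identify that projection with the column space of the unsigned vertex--edge incidence matrix of $H^S_{\mathcal F,\mathcal G}$, whose rank is that of the graphic matroid. Note only that your intermediate orthogonal-complement detour is off (since $(A+B)^{\perp}=A^{\perp}\cap B^{\perp}$, not the span of the union), but your final ``clean way'' via the incidence matrix and rank--nullity is the complete and correct version, so nothing essential is missing.
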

\begin{proof}
    First note that
    \[
    	\berg(M) + \berg(N) = \bigcup_{\mathcal{F},\mathcal{G}} K_\mathcal{F} + K_\mathcal{G}
    \]
    where the union is taken over all pairs $\mathcal{F},\mathcal{G}$ such that $\mathcal{F},\mathcal{G}$
    are respectively flags of flats of $M$ and $N$.
    We claim that the linear hull of $K_\mathcal{F} + K_\mathcal{G}$ is the column span of the incidence matrix of
    $H_{\mathcal{F},\mathcal{G}}^E$.
    Indeed, the linear hull of $K_{\mathcal{F}}$ (resp.~$K_\mathcal{G}$) is the column span of the matrix
    $M_\mathcal{F}$ (resp. $M_\mathcal{G}$)
    whose columns are the characteristic vectors of the flats in $\mathcal{F}$ (resp.~$\mathcal{G}$).
    We can then apply an invertible sequence of column operations to $M_\mathcal{F}$ (resp. $M_\mathcal{G}$) to obtain the matrix $N_\mathcal{F}$ (resp.~$N_\mathcal{G}$) whose columns are in bijection with the flats in $\mathcal{F}$ (resp. $\mathcal{G}$), where the column corresponding to $F \in \mathcal{F}$ (resp.~$G\in \mathcal{G}$) is the characteristic vector of the set containing the elements in $F$ (resp.~$G$) that lie in no proper sub-flat of $F$ (resp.~$G$) in $\mathcal{F}$ (resp.~$\mathcal{G}$).
    The incidence matrix of $H_{\mathcal{F},\mathcal{G}}^E$ is $(N_\mathcal{F} | N_\mathcal{G})$, thus proving the claim.

    It follows that the projection of the linear hull of $K_\mathcal{F} + K_\mathcal{G}$ onto
    the coordinates indexed by $S \subseteq E$ is the column space of the incidence matrix of $H_{\mathcal{F},\mathcal{G}}^S$.
    The dimension of this linear space is the rank of the graphic matroid of $H_{\mathcal{F},\mathcal{G}}^S$.
\end{proof}

The closure operator of a matroid $M$ will be denoted by $\cl_M$.
Given a subset $I$ of the ground set of $M$, $M|I$ denotes the restriction of $M$ to $I$.

\begin{lemma}\label{lemma:independentIffCycleFree}
    Let $M$ and $N$ be matroids of finite rank on a common ground set $E$ and let $S \subseteq E$.
    Then $S$ is independent in $\mathcal{M}(r_M + r_N - 1)$
    if and only if there exist flags of flats $\mathcal{F},\mathcal{G}$ of $M$ and $N$
    such that $H_{\mathcal{F},\mathcal{G}}^S$ is cycle-free.
\end{lemma}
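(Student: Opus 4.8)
The plan is to characterize independence in $\mathcal{M}(r_M + r_N - 1)$ via the submodular-function construction of Definition~\ref{defn:matroidFromSubmodular}, and to match this up with an appropriate choice of flags of flats witnessing that $H_{\mathcal{F},\mathcal{G}}^S$ is cycle-free. Recall that $S$ is independent in $\mathcal{M}(r_M + r_N - 1)$ precisely when $|S'| \le r_M(S') + r_N(S') - 1$ for every nonempty $S' \subseteq S$. On the graph side, $H_{\mathcal{F},\mathcal{G}}^S$ is cycle-free exactly when $|S|$ (the number of edges) does not exceed the number of vertices it touches, minus the number of connected components it spans; equivalently, for every nonempty $S' \subseteq S$, the number of edges of $H_{\mathcal{F},\mathcal{G}}^{S'}$ is at most the number of vertices it touches minus one (a graph is a forest iff every nonempty edge subset spans more vertices than edges, i.e.\ induces at least one more vertex than edge). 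So the first step is to observe that both sides of the biconditional are ``local'' conditions quantified over subsets $S' \subseteq S$, and it therefore suffices to prove, for a fixed nonempty $S$: there exist flags $\mathcal{F},\mathcal{G}$ with $H_{\mathcal{F},\mathcal{G}}^{S'}$ having at most $|\mathrm{vertices}| - 1$ edges for all $S' \subseteq S$, if and only if $|S'| \le r_M(S') + r_N(S') - 1$ for all $S' \subseteq S$.

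For the ``only if'' direction I would argue directly: given flags $\mathcal{F},\mathcal{G}$ with $H_{\mathcal{F},\mathcal{G}}^S$ cycle-free, fix any nonempty $S' \subseteq S$. The vertices of $\mathcal{F}$ that are endpoints of edges of $H_{\mathcal{F},\mathcal{G}}^{S'}$ are flats of $M$ of the form $\mathcal{F}^e$ for $e \in S'$; since they form a chain (a flag is nested), there are at most $r_M(\mathrm{cl}_M(S')) + 1 \le r_M(S') + \cdots$ — more carefully, a nested chain of distinct nonempty flats all contained in $\mathrm{cl}_M(S')$ has length at most $r_M(S') $ if we also observe that the bottom flat can be taken to have rank $\ge 1$; I would count so that the $\mathcal{F}$-side contributes at most $r_M(S')$ vertices and the $\mathcal{G}$-side at most $r_N(S')$ vertices. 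Then cycle-freeness gives $|S'| \le r_M(S') + r_N(S') - 1$, using that the forest is spanned by these two groups of vertices. The ``if'' direction is the substantive one: assuming the inequalities, I want to produce the flags. Here the natural move is induction on $|S|$, building the flags greedily, or — cleaner — to invoke the fact that $\{S' : |S''| \le r_M(S'') + r_N(S'') - 1 \text{ for all } S'' \subseteq S'\}$ is exactly the independent-set family of $\mathcal{M}(r_M + r_N - 1)$, and to construct, by repeatedly extending, flags of flats for $M$ and $N$ so that each added element $e$ attaches to $\mathcal{F}^e$ and $\mathcal{G}^e$ without creating a cycle; the counting inequality is precisely what guarantees at each stage that some such extension is available.

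The main obstacle I anticipate is the bookkeeping in the ``if'' direction: one must show that the submodular inequalities don't merely prevent cycles locally but can be realized \emph{simultaneously} by a single pair of flags. The cleanest route is probably to note that since $S$ is independent in $\mathcal{M}(r_M + r_N - 1) = \mathcal{M}(r_M) \vee \mathcal{M}(r_N - ?) $ — but $r_N - 1$ is not a matroid rank function, so matroid union (Theorem~\ref{thm:unionFromSubmodular}) doesn't literally apply; one would instead want something like: $S$ independent in $\mathcal{M}(r_M + r_N - 1)$ iff $S$ can be partitioned into a part independent in $M$ and a part whose closure-chain structure in $N$ is constrained, all save one ``shared'' element. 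I expect the right formalization is to set up a matching/Hall-type or exchange argument: order $S = \{e_1, \dots, e_k\}$ so that each prefix is independent in $\mathcal{M}(r_M + r_N - 1)$, and inductively maintain flags $\mathcal{F}, \mathcal{G}$ such that $H_{\mathcal{F},\mathcal{G}}^{\{e_1,\dots,e_i\}}$ is a forest; adding $e_{i+1}$, the inequality $i+1 \le r_M(\{e_1,\dots,e_{i+1}\}) + r_N(\{e_1,\dots,e_{i+1}\}) - 1$ forces that either $\mathrm{cl}_M$ or $\mathrm{cl}_N$ strictly grows, which lets one refine the corresponding flag to introduce a new vertex for $e_{i+1}$'s endpoint, keeping the graph acyclic. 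Pushing this exchange argument through cleanly, and in particular verifying that the flag refinement at each step is compatible with all previously placed edges, is where the real work lies.
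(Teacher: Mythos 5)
Your forward direction (given flags with $H^S_{\mathcal{F},\mathcal{G}}$ cycle-free, deduce independence) is sound and is essentially the paper's argument read contrapositively: for $S'\subseteq S$ the distinct flats $\mathcal{F}^e$, $e\in S'$, form a chain, and choosing for each one an element of $S'$ it contains that its predecessor does not yields an $M$-independent subset of $S'$, so the $\mathcal{F}$-side of $H^{S'}_{\mathcal{F},\mathcal{G}}$ has at most $r_M(S')$ vertices (and similarly at most $r_N(S')$ on the $\mathcal{G}$-side); a forest on these vertices then has at most $r_M(S')+r_N(S')-1$ edges.

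The converse contains a genuine gap. The mechanism you propose --- that the inequality $i+1\le r_M(\{e_1,\dots,e_{i+1}\})+r_N(\{e_1,\dots,e_{i+1}\})-1$ ``forces that either $\cl_M$ or $\cl_N$ strictly grows,'' so that $e_{i+1}$ can always be attached to a freshly created flag vertex --- is false whenever the prefix inequality is slack. For example, with $M=N=U_{2,3}$ on $\{a,b,c\}$ the full set is independent ($3\le 2+2-1$), but at the third step neither rank increases, so the last edge must be placed between two already-existing vertices lying in different components of the current forest, and whether that is possible depends on all earlier choices; a one-element-at-a-time greedy extension can paint itself into a corner, and you give no argument that the flag refinements preserve acyclicity of the previously placed edges (you yourself flag this as ``where the real work lies,'' but that work is the entire content of this direction). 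The paper closes this by a global construction you do not reach: apply matroid union (Theorem~\ref{thm:unionFromSubmodular}) to $r_M+r_N$ to write $S=I_1\cup I_2$ with $I_1,I_2$ independent and spanning $S$ in $M$ and $N$ respectively, observe that the ``$-1$'' forces $|I_1\cap I_2|\ge 1$, anchor the top flats of both flags at a common element of $I_1\cap I_2$, and then run a top-down peeling procedure that alternately indexes the elements of $I_2\setminus I_1$ absorbed into $\cl_M$ of the already-indexed part of $I_1$ and vice versa, recursing on the leftovers $I_1',I_2'$ (which again intersect unless empty). Some substitute for this decomposition and for the role of $I_1\cap I_2$ is needed; without it your sketch does not yield the flags.
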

\begin{proof}
    First assume there exist $\mathcal{F},\mathcal{G}$ such that $H_{\mathcal{F},\mathcal{G}}^S$ is cycle-free.
    Since $H_{\mathcal{F},\mathcal{G}}^S$ has finitely many vertices, $S$ is finite.
    For $I \subseteq E$,
    define $\mathcal{F}|I$ (respectively $\mathcal{G}|I$) to be the flags of flats of $M|I$ ($N|I$)
    obtained by restricting each $F \in \mathcal{F}$ ($G \in \mathcal{G}$) to $I$.
    The following argument shows that $S$ is independent in $\mathcal{M}(r_M + r_N - 1)$.
    Let $\eta_I: \mathcal{F}|I \cup \mathcal{G}|I \rightarrow \mathcal{F}\cup \mathcal{G}$
    be the map that sends each $F \in \mathcal{F}|I$ to the minimal
    element of $\mathcal{F}$ containing $F$,
    and each $G \in \mathcal{G}|I$ to the minimal element of $\mathcal{G}$ containing $G$.
    Then $\eta_I$ defines a graph homomorphism from $H_{\mathcal{F}|I,\mathcal{G}|I}^I$
    to $H_{\mathcal{F},\mathcal{G}}^S$
    that is an injection on edge sets.
    If $I \subseteq S$ has $r_M(I) + r_N(I)$ or more elements,
    then $H_{\mathcal{F}|I,\mathcal{G}|I}$ has a cycle.
    Via $\eta_I$, this gives a cycle in $H_{\mathcal{F},\mathcal{G}}$.

    Assume that $I$ is independent in $\mathcal{M}(r_M + r_N - 1)$.
    Then $I$ is finite.
    We construct flags of flats $\mathcal{F},\mathcal{G}$ of $M$ and $N$
    such that $H_{\mathcal{F},\mathcal{G}}^I$ is cycle-free.
    Since $I$ is also independent in $\mathcal{M}(r_M + r_N)$,
    Theorem~\ref{thm:unionFromSubmodular} implies $I = I_1 \cup I_2$
    for independent sets $I_1,I_2$ of $M,N$.
    By the augmentation axiom for independent sets of a matroid,
    we may assume that $I_1$ and $I_2$ span $I$ in $M$ and $N$ respectively.
    Given an ordering of the of the elements of $I_1$ and $I_2$, i.e.
    \[
        I_1 = \{e_1^{(1)},\dots,e_{r_M}^{(1)}\} \quad {\rm and} \quad I_2 = \{e_1^{(2)},\dots,e_{r_N}^{(2)}\}
    \]
    define $F_i := \cl_M(\{e_1^{(1)},\dots,e_i^{(1)}\})$ and
    $G_i := \cl_N(\{e_1^{(2)},\dots,e_i^{(2)}\})$
    and the flags of flats $\mathcal{F} := \{F_1,F_2,\dots,F_{r_M(I)}\}$
    and $\mathcal{G} := \{G_1,\dots,G_{r_N(I)}\}$.

    We now describe an iterative procedure to order $I_1$ and $I_2$
    so that the resulting $H_{\mathcal{F},\mathcal{G}}^I$ is cycle-free.
    Since $I$ is independent in $\mathcal{M}(r_M + r_N - 1)$, our spanning assumptions on $I_1$ 
    and $I_2$ imply $|I_1 \cap I_2| \ge 1$.
    We begin at the top, setting $e_{r_M(I)}^{(1)} = e_{r_N(I)}^{(2)}$ to be an element of $I_1 \cap I_2$.
    Now, assume $e_{r_M(I)}^{(1)},\dots,e_{r_M(I)-k_M}^{(1)}$
    and $e_{r_N(I)}^{(2)},\dots,e_{r_N(I)-k_N}^{(2)}$ have been set for some $k_M,k_N$ and define
    $I_1' = I_1\setminus \{e_{r_M(I)}^{(1)},\dots,e_{r_M(I)-k_M}^{(1)}\}$ and
    $I_2' = I_2\setminus \{e_{r_N(I)}^{(2)},\dots,e_{r_N(I)-k_N}^{(2)}\}$ to be the unindexed elements.
    For $0 \le i \le k_M$, define $S_i^{(1)}$ by
    \begin{align*}
        S_i^{(1)}:=\cl_M(\{e_{r_M(I)-i}^{(1)},\dots,&e_{r_M(I)-k_M}^{(1)}\} \cup I_1')\setminus\\
        &\left[\{e_{r_M(I)-i}^{(1)}\} \cup\cl_M(\{e_{r_M(I)-i-1}^{(1)},\dots,e_{r_M(I)-k_M}^{(1)}\} \cup I_1')\right].
    \end{align*}
    Note that $S_i^{(1)} \subseteq I_2\setminus I_1$.
    We will take as an inductive hypothesis that there exists some $0 \le i_0 \le k_M$ such that
    $S_i^{(1)} \subseteq I_2'$ whenever $i \ge i_0$, and $S_i^{(1)} \cap I_2' = \emptyset$ otherwise.
    For each $i_0 \le i \le k_M$, index $S_i^{(1)}$ using the largest available indices in any order.
    Switch the roles of $I_1$ and $I_2$ and of $M$ and $N$, define $S_j^{(2)}$ analogously for
    $0 \le j \le k_N$ and repeat, continuing until nothing new can be indexed in this way, i.e.~when
    \[
    	\bigcup_{i=0}^{k_M} S_i^{(1)} \cap I_2' = \bigcup_{i=0}^{k_N} S_i^{(2)} \cap I_1' = \emptyset.
    \]
    At this point, no matter how we index $I_1',I_2'$,
    the induced subgraph of $H_{\mathcal{F},\mathcal{G}}^I$
    whose vertex set is $F_{k_M},\dots,F_{r_M(I)},G_{k_N},\dots,G_{r_N(I)}$ is a tree.
    Moreover, since
    \begin{align*}
    	\bigcup_{i=0}^{k_M} S_i^{(1)} = I_2\setminus(\{e^{(2)}_{r_N(I)}\} \cup \cl_M(I_1')) \quad {\rm and} \quad 
    	\bigcup_{i=0}^{k_N} S_i^{(2)} = I_1\setminus(\{e^{(1)}_{r_M(I)}\} \cup \cl_M(I_2')),
    \end{align*}
    we also have $I_1' \subseteq \{e^{(1)}_{r_M(I)}\} \cup \cl_N(I_2')$ and  $I_2' \subseteq \{e^{(2)}_{r_N(I)}\}  \cup \cl_M(I_1')$.
    By definition of $I_1',I_2'$, we know $e^{(1)}_{r_M(I)} = e^{(2)}_{r_N(I)} \notin I_1' \cup I_2'$,
    i.e.~that $I_1' \subseteq \cl_N(I_2')$ and  $I_2' \subseteq \cl_M(I_1')$,
    and therefore $r_M(I_1' \cup I_2') + r_N(I_1' \cup I_2') = |I_1'| + |I_2'|$.
    Independence of $I_1' \cup I_2'$ in $\mathcal{M}(r_M + r_N - 1)$
    then implies that either $I_1' \cup I_2' = \emptyset$ or $I_1' \cap I_2' \neq \emptyset$.
    In the former case, we are done.
    In the latter, we repeat this procedure setting $I_1$ to $I_1'$ and $I_2$ to $I_2'$.
    Since $I_1\cap I_2 = \{e_{r_M(I)}\} \cup (I_1' \cap I_2')$, this will eventually terminate
    with $H_{\mathcal{F},\mathcal{G}}^I$ being
    a forest with $|I_1 \cap I_2|$ connected components.
\end{proof}

We now have the following matroid-theoretic generalization of Theorem~\ref{thm:mainResult}.

\begin{thm}\label{thm:mainResultMatroidVersion}
    Let $M$ and $N$ be loopless matroids of finite rank on a common ground set $E$.
    Then $I \subseteq E$ is independent in $\mathcal{M}(r_M + r_N - 1)$ if and only
    if $\dim(\pi_I(\berg(M) + \berg(N))) = |I|$.
\end{thm}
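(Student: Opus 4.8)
The plan is to combine Lemma~\ref{lemma:graphFromConeInMinkowskiSumOfBergmanFans} and Lemma~\ref{lemma:independentIffCycleFree}, which together do essentially all of the work. First I would observe that, by Lemma~\ref{lemma:graphFromConeInMinkowskiSumOfBergmanFans}, the quantity $\dim(\pi_I(\berg(M) + \berg(N)))$ equals the maximum, over all choices of flags of flats $\mathcal{F}$ of $M$ and $\mathcal{G}$ of $N$, of the rank of the graphic matroid of the bipartite graph $H_{\mathcal{F},\mathcal{G}}^I$. Since this graph has $|I|$ edges, its graphic matroid has rank exactly $|I|$ precisely when the graph is cycle-free (a graph on $k$ edges has a spanning forest of size $k$ iff it has no cycles). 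Hence $\dim(\pi_I(\berg(M) + \berg(N))) = |I|$ if and only if there exist flags of flats $\mathcal{F},\mathcal{G}$ such that $H_{\mathcal{F},\mathcal{G}}^I$ is cycle-free.

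Next I would invoke Lemma~\ref{lemma:independentIffCycleFree}, which states exactly that the existence of such flags $\mathcal{F},\mathcal{G}$ is equivalent to $I$ being independent in $\mathcal{M}(r_M + r_N - 1)$. Chaining the two equivalences gives the theorem. One small bookkeeping point I would address: Lemma~\ref{lemma:graphFromConeInMinkowskiSumOfBergmanFans} and Lemma~\ref{lemma:independentIffCycleFree} are both stated for matroids of finite rank on a common ground set, and the hypothesis here that $M,N$ are loopless guarantees that the Bergman fans $\berg(M)$ and $\berg(N)$ are defined (Definition~\ref{defn:bergmanFan} requires looplessness), so the setup is consistent. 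I would also note that when $I$ is infinite the right-hand side cannot equal $|I|$ since $\dim(\pi_I(\berg(M)+\berg(N)))$ is bounded by $r_M(E) + r_N(E) < \infty$, and correspondingly an infinite set is never independent in $\mathcal{M}(r_M + r_N - 1)$ by boundedness; so both conditions fail together and the equivalence is vacuously fine in that case, but in practice one restricts to finite $I$ exactly as in Lemma~\ref{lemma:independentIffCycleFree}.

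The substantive content has really been pushed into the two preceding lemmas, so the proof of this theorem itself is short. The main obstacle, such as it is, is simply making sure the translation "graphic matroid of $H_{\mathcal{F},\mathcal{G}}^I$ has full rank $|I|$" $\Longleftrightarrow$ "$H_{\mathcal{F},\mathcal{G}}^I$ is cycle-free" is stated cleanly and that maximizing the rank over $\mathcal{F},\mathcal{G}$ interacts correctly with the existential quantifier in Lemma~\ref{lemma:independentIffCycleFree} — i.e.\ that $\max_{\mathcal{F},\mathcal{G}} \rank = |I|$ iff \emph{some} pair $\mathcal{F},\mathcal{G}$ yields a forest, which is immediate since the rank can never exceed $|I|$. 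Given Lemmas~\ref{lemma:graphFromConeInMinkowskiSumOfBergmanFans} and~\ref{lemma:independentIffCycleFree}, there is no remaining difficulty.
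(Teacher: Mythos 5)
Your proposal is correct and follows exactly the paper's route: the paper also derives this theorem as an immediate consequence of Lemmas~\ref{lemma:graphFromConeInMinkowskiSumOfBergmanFans} and~\ref{lemma:independentIffCycleFree}, with the only implicit step being the observation you make explicit, that a graph with $|I|$ edges has graphic-matroid rank $|I|$ precisely when it is cycle-free.
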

\begin{proof}
    This is an immediate consequence of Lemmas~Lemmas~\ref{lemma:graphFromConeInMinkowskiSumOfBergmanFans} and~\ref{lemma:independentIffCycleFree}.
\end{proof}

We are now ready to prove Theorem~\ref{thm:mainResult}.
\begin{proof}[Proof of Theorem~\ref{thm:mainResult}]
    First assume $E$ is finite.
    Since neither $U$ nor $V$ is contained in a coordinate hyperplane,
    $M$ and $N$ are loopless.
    Proposition~\ref{prop:algMatroidOfLinearSpaceIsBergmanFanOfMatroid} and Lemmas~\ref{lemma:tropPreservesAlgMatroid} and \ref{lemma:HadamardProductMinkowskiSum}
    imply that the desired result is a special case of Theorem~\ref{thm:mainResultMatroidVersion}.
    Now assume $E$ is infinite. Then $I\subseteq E$ is independent in $\mathcal{M}(U\star V)$
    if and only if $I$ is independent in $\mathcal{M}(U\star V)|I$.
    Since $\mathcal{M}(U\star V)$ has finite rank and $\mathcal{M}(\pi_I(U\star V)) = \mathcal{M}(U\star V)|I$,
    the finite case implies that
    $I$ is independent if and only if $|I| \le r_M(I)+r_N(I)-1$.
\end{proof}

It would be interesting to see if Theorem~\ref{thm:mainResult} could be generalized to allow Hadamard products
of more than two linear spaces.
To this end, we make the following conjecture.

\begin{conj}\label{conj:theOnlyOne}
    If $L_1,\dots,L_d \subseteq \mathbb{R}^E$ are finite-dimensional linear spaces with algebraic matroids $M_1,\dots,M_d$, then
    $\mathcal{M}(L_1\star\dots\star L_d) = \mathcal{M}(r_{M_1} + \dots + r_{M_d} - d + 1)$.
\end{conj}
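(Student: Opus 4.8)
The plan is to run the proof of Theorem~\ref{thm:mainResult} with $d$ factors. Regarding the $L_i$ inside $\mathbb{C}^E$ and arguing as in that proof, we may assume $E$ is finite (replace $E$ by each finite $I \subseteq E$, using that $\mathcal{M}(r_{M_1} + \dots + r_{M_d} - d + 1)$ has finite rank) and that each $M_i$ is loopless (an element that is a loop of some $M_i$ is a loop of both matroids in the statement, so may be deleted). Iterating Lemma~\ref{lemma:HadamardProductMinkowskiSum}, and using that a Hadamard product of irreducible varieties is irreducible (being the Zariski closure of a polynomial image of a product), gives
\[
    \trop(L_1 \star \dots \star L_d) \;=\; \trop(L_1) + \dots + \trop(L_d) \;=\; \berg(M_1) + \dots + \berg(M_d),
\]
the last equality by Proposition~\ref{prop:algMatroidOfLinearSpaceIsBergmanFanOfMatroid}. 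By Lemma~\ref{lemma:tropPreservesAlgMatroid}, the conjecture is then equivalent to: for each $S \subseteq E$, $\dim\bigl(\pi_S(\berg(M_1) + \dots + \berg(M_d))\bigr) = |S|$ if and only if $|I| \le \sum_{i=1}^d r_{M_i}(I) - d + 1$ for every $I \subseteq S$. Since Minkowski sum distributes over the unions defining the Bergman fans, $\pi_S$ of the left-hand fan is a finite union of polyhedral cones $\pi_S(K_{\mathcal{F}_1} + \dots + K_{\mathcal{F}_d})$, one for each tuple of flags of flats $\mathcal{F}_i$ of $M_i$; exactly as in Lemma~\ref{lemma:graphFromConeInMinkowskiSumOfBergmanFans}, the linear hull of such a cone is $\mathrm{span}_{\mathbb{R}}\{\mathbf{1}_{F \cap S} : F \in \mathcal{F}_i,\ 1 \le i \le d\}$. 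So the question becomes whether some tuple of flags makes these indicator vectors span all of $\mathbb{R}^S$.

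One direction — giving the containment $\mathcal{M}(L_1 \star \dots \star L_d) \subseteq \mathcal{M}(r_{M_1} + \dots + r_{M_d} - d + 1)$ — is a counting argument. For any flags and any $I \subseteq S$, the sets $F \cap I$ for $F \in \mathcal{F}_i$ form a chain of flats of $M_i|I$, so the vectors $\mathbf{1}_{F \cap I}$ coming from $\mathcal{F}_i$ span a subspace of $\mathbb{R}^I$ of dimension at most $r_{M_i}(I)$; a short case analysis, according to how many of the $d$ flags contain a flat spanning $I$, shows that at least $d-1$ units of that potential are unavoidably lost (a flag that fails to span $I$ contributes at most $r_{M_i}(I) - 1$, while the flags that do span $I$ share the common top vector $\mathbf{1}_I$). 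Hence $\dim\bigl(\pi_I(K_{\mathcal{F}_1} + \dots + K_{\mathcal{F}_d})\bigr) \le \sum_{i=1}^d r_{M_i}(I) - d + 1$ for all tuples of flags, so if the count fails for some $I \subseteq S$ then $\dim\bigl(\pi_I(\berg(M_1) + \dots + \berg(M_d))\bigr) < |I|$ and $S$ is dependent in $\mathcal{M}(L_1 \star \dots \star L_d)$.

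The substance of the conjecture is the converse: assuming $|I| \le \sum_{i=1}^d r_{M_i}(I) - d + 1$ for all $I \subseteq S$, exhibit flags $\mathcal{F}_1, \dots, \mathcal{F}_d$ with $\mathrm{span}\{\mathbf{1}_{F \cap S}\} = \mathbb{R}^S$. I would try to generalize Lemma~\ref{lemma:independentIffCycleFree}: use Theorem~\ref{thm:unionFromSubmodular} to write $S = I_1 \cup \dots \cup I_d$ with $I_i$ independent in and spanning $S$ in $M_i$; then, given orderings of the $I_i$, take $\mathcal{F}_i$ to be the flag of $M_i$-closures of the initial segments of $I_i$, and choose the orderings so that the resulting $d$-partite incidence system of indicator vectors has full row rank. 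For $d = 2$ this amounts to making the associated bipartite graph a forest, and the count forces $|I_1 \cap I_2| \ge 1$, seeding an inductive ``peel off a shared element, restrict to the remainder, recurse'' argument. I expect this to be the main obstacle — and likely the reason the statement remains conjectural. For $d \ge 3$ the clean $d=2$ dictionary breaks: linear independence of the $d$-partite, $d$-uniform incidence system $\{\mathbf{1}_{F \cap S}\}$ is strictly weaker than the hypergraph being ``hyperforest''-like (there are $d$-partite analogues of odd cycles whose incidence vectors are linearly independent), so one cannot simply aim for a forest; nor can one induct on $d$ through Theorem~\ref{thm:mainResult}, since the partial product $L_1 \star \dots \star L_{d-1}$ is not a linear space. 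A proof would seem to require a genuinely new inductive construction of the flags matched to the count condition, or an independent handle on the rank function of the Dilworth-truncation-type matroid $\mathcal{M}(r_{M_1} + \dots + r_{M_d} - d + 1)$ together with a geometric operation on Hadamard products that realizes it.
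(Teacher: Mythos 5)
This statement is Conjecture~\ref{conj:theOnlyOne}; the paper offers no proof of it (it is introduced precisely as an open generalization of Theorem~\ref{thm:mainResult}), so there is nothing to compare your argument against, and your proposal --- which you candidly do not claim to be complete --- does not settle it either. That said, the parts you do carry out are sound and match the paper's framework: the reduction to finite $E$ and loopless $M_i$, the iterated use of Lemma~\ref{lemma:HadamardProductMinkowskiSum} (correctly noting the irreducibility needed to iterate it) together with Lemma~\ref{lemma:tropPreservesAlgMatroid} and Proposition~\ref{prop:algMatroidOfLinearSpaceIsBergmanFanOfMatroid}, and the counting argument for the containment $\mathcal{M}(L_1\star\dots\star L_d)\subseteq \mathcal{M}(r_{M_1}+\dots+r_{M_d}-d+1)$ all check out. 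In particular your case analysis for the upper bound $\dim\bigl(\pi_I(K_{\mathcal{F}_1}+\dots+K_{\mathcal{F}_d})\bigr)\le \sum_i r_{M_i}(I)-d+1$ is correct: if $k\ge 1$ of the flags contain a flat spanning $I$ they share the vector $\mathbf{1}_I$ and jointly lose $k-1$ dimensions, each remaining flag contributes at most $r_{M_i}(I)-1$, and the $k=0$ case gives an even smaller bound.

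The genuine gap is exactly where you locate it: producing, from the count condition, a tuple of flags whose indicator vectors span $\mathbb{R}^S$. Your diagnosis of why Lemma~\ref{lemma:independentIffCycleFree} does not extend is accurate --- for $d\ge 3$ full rank of the $d$-partite incidence system is strictly weaker than acyclicity of the associated hypergraph, and one cannot induct on $d$ via Theorem~\ref{thm:mainResult} because the partial Hadamard product is no longer linear. So treat your write-up as partial progress on an open problem, not as a proof; if you want to pursue it, the one-containment you established at least confirms that the conjectured matroid is an upper bound (every independent set of $\mathcal{M}(L_1\star\dots\star L_d)$ satisfies the count), which is consistent with the conjecture and narrows the remaining work to a single direction.
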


\section{Gain graphs and symmetry-forced rigidity}\label{section:symmetry}
Gain graphs describe ordinary graphs modulo free actions of groups.
Their use in rigidity theory originates with
Ross~\cite{ross2011geometricDissertation} and Malestein and Theran~\cite{malestein2013generic}
(though in~\cite{malestein2013generic} they are called ``colored graphs'').
In this section, we provide the necessary background on gain graphs and symmetric frameworks
and use this to introduce an analogue of the Cayley-Menger variety for symmetry-forced rigidity.
With all this at our disposal, we use Theorem~\ref{thm:mainResult} to give a short proof
of~\cite[Theorem~1]{malestein2015frameworks}, which is also a special case of \cite[Theorem~6.3]{jordan2016gain}.

\subsection{Gain graphs, frame matroids, and symmetry}
For a more leisurely introduction to gain graphs, see~\cite[Chapter 2]{gross2001topological}.
As before, the vertex and edge sets of a graph $G$ will be denoted $V(G)$ and $E(G)$.
When $G$ is directed, $A(G)$ will denote its arc set and $E(G)$ will denote
the edge set of its underlying undirected graph.
Directed graphs will be allowed to have loops and parallel arcs
but undirected graphs we consider will all be simple.
The source and target of an arc $e$ will be denoted $\source(e)$ and $\target(e)$.
For $S \subseteq E(G)$, we let $V(S)$ denote the set of vertices incident to some edge in $S$,
and we let $C(S)$ denote the partition of $S$ by connected components of the graph $(V(S),S)$.

Given a group $\mathcal{S}$, an \emph{$\mathcal{S}$-gain graph}, or simply a \emph{gain graph},
is a pair $(G,\phi)$ consisting of a directed graph $G$
along with an arc-labeling $\phi: A(G)\rightarrow \mathcal{S}$.
The \emph{gain} of a walk in the undirected graph underlying $G$ is defined to be
the product of the arc labels, inverting whenever an arc is traversed backwards.
In this sense, the orientation of $G$ does not really matter since reversing the direction of an arc $e$
is the same as inverting $\phi(e)$.
A cycle in $(G,\phi)$ is \emph{balanced} if its gain is the identity element of $\mathcal{S}$.
Given a gain graph $(G,\phi)$, the \emph{frame matroid} $\mathcal{M}(G,\phi)$ of $(G,\phi)$
is the matroid whose ground set is $E(G)$
where $S \subseteq E(G)$ is independent whenever every connected component of the subgraph of $G$ on edge set $S$
has at most one cycle, which is not balanced.
For more on frame matroids see \cite{zaslavsky1989biased,zaslavsky1991biased}.

Let $G$ be an undirected simple graph.
An \emph{automorphism} of $G$ is a permutation $\pi:V(G)\rightarrow V(G)$
such that $\{u,v\} \in E(G)$ if and only if $\{\pi(u),\pi(v)\} \in E(G)$.
The set of automorphism of $G$ is a group under composition which we denote by $\aut(G)$.
Given a group $\mathcal{S}$, an \emph{$\mathcal{S}$-action on $G$} is a group homomorphism $\rho:\mathcal{S}\rightarrow \aut(G)$.
An $\mathcal{S}$-action $\rho$ is said to be \emph{free} if $\rho(g)(v) \neq v$ unless $g$ is the identity element of $\mathcal{S}$
(note that unlike in \cite{gross2001topological}, we do not require the same condition for the induced action on $E(G)$).
We will say that a graph $G$ is \emph{$\mathcal{S}$-symmetric} if there exists a free $\mathcal{S}$-action on $G$.

We will now describe how $\mathcal{S}$-gain graphs offer a compact way to describe $\mathcal{S}$-symmetric graphs.
Let $\mathcal{S}$ be a group and let $\rho$ be a free $\mathcal{S}$-action on a simple undirected graph $G$.
Note that $\rho$ extends to an $\mathcal{S}$-action on $E(G)$ by $g\{u,v\} = \{gu,gv\}$.
Let $V(G)\contract \rho$ and $E(G)\contract \rho$ respectively denote the orbits of the action of $\rho$ on $V(G)$ and $E(G)$.
The $\mathcal{S}$-orbit of a single vertex $v$ or edge $e$ will be denoted $\mathcal{S}v$ and $\mathcal{S}e$.
Let $U$ consist of exactly one representative from each vertex orbit in $V(G)\contract \rho$.
Since $\rho$ is free, each edge orbit in $E(G)\contract \rho$ can be written as $\{(gu,ghv): g \in \mathcal{S}\}$ for unique $h \in \mathcal{S}$ and $u,v \in U$.
Viewing each such edge-orbit as an arc from $\mathcal{S}u$ to $\mathcal{S}v$ labeled by the aforementioned $h$,
we obtain a gain graph on vertex set $V(G)\contract \rho$, which we we denote by $G\contract (\rho,U)$.

Conversely, we associate to a gain graph $(G,\phi)$ the \emph{covering graph},
which is the simple undirected graph $H$
with vertex set $\mathcal{S}\times V(G)$
and edge set
\[
    \{\{(g,u),(g\phi(e),v)\}: e = (u,v) \in A(G), g \in \mathcal{S}\}.
\]
Then $\mathcal{S}$ acts freely on the covering graph via the action $\rho$ defined by $\rho(h)(g,u) = (hg,u)$.
Moreover, setting $U = \{({\rm id},u): u \in V(G)\}$ gives $H\contract (\rho,U)=(G,\phi)$.

\subsection{Symmetry-forced rigidity and Cayley-Menger varieties}
Let $\mathcal{E}(d)$ denote the group of Euclidean isometries of $\mathbb{R}^d$
and let $\mathcal{S}$ be a subgroup of $\mathcal{E}(d)$.
An \emph{$\mathcal{S}$-symmetric framework} is a framework $(G,p)$ such that there exists an
$\mathcal{S}$-action $\rho$ on $G$ such that for each $v \in V(G)$ and $g \in \mathcal{S}$, $gp(v) = p(\rho(g)(v))$.
Unless otherwise stated, the group action corresponding to an
$\mathcal{S}$-symmetric framework will be free.

Let $\mathcal{S}$ be a subgroup of $\mathcal{E}(d)$.
An $\mathcal{S}$-gain graph $(G,\phi)$
along with a function $p: V(G)\rightarrow \mathbb{R}^d$ encodes an $\mathcal{S}$-symmetric framework $(H,q)$
on the covering graph $H$ of $(G,\phi)$ given by $q((g,v)) = gp(v)$.
Conversely, any $\mathcal{S}$-symmetric framework can be encoded by an $\mathcal{S}$-gain graph in this way.
In particular, if $(H,q)$ is $\mathcal{S}$-symmetric with corresponding $\mathcal{S}$-action $\rho$ and $U$ consists of one element from each vertex orbit, then we set $G := H\contract (\rho,U)$ and define $p:V(G)\rightarrow \mathbb{R}^d$
by $p(\mathcal{S}u) = q(u)$ for $u \in U$.
Given an $\mathcal{S}$-symmetric framework,
we refer to its representation as the pair $(H,q)$ as its \emph{classical representation}
and its representation as a triple $(G,\phi,p)$ as a \emph{gain graph representation} (note that gain graph representations need not be unique).
Figure~\ref{fig:representAsGainGraph} shows the classical and gain graph
representations of an $\mathcal{S}$-symmetric framework
where $\mathcal{S}$ is the group generated by a ninety degree rotation.
Figure~\ref{figure:wallpaperSymmetry} shows examples with wallpaper symmetry.

Given an integer $n$ and a group $\mathcal{S}$,
define $K_n(\mathcal{S})$ to be the directed graph on vertex set $\{1,\dots,n\}$ with
$|\mathcal{S}|-1$ loops at each vertex
and $|\mathcal{S}|$ parallel arcs from $i$ to $j$ whenever $i < j$.
Let $\psi_n$ be the gain function on $K_n(\mathcal{S})$ that associates a distinct
element of $\mathcal{S}$ to each non-loop edge of $K_n(\mathcal{S})$,
and a distinct non-identity element of $\mathcal{S}$ to each loop of $K_n(\mathcal{S})$.
We will often abuse notation and write $K_n(\mathcal{S})$ to mean the gain graph $(K_n(\mathcal{S}),\psi_n)$.
Figure~\ref{fig:completeGainGraph} shows $K_2(\mathbb{Z}_2)$.
When $\mathcal{S}$ is finite,
matroids of the form $\mathcal{M}(K_n(\mathcal{S}))$ are known as \emph{Dowling geometries} \cite{tanigawa2015matroids}.
When $\mathcal{S}$ is infinite, $\mathcal{M}(K_n(\mathcal{S}))$ is an infinite matroid with finite rank.

\begin{defn}\label{defn:symmetricCayleyMenger}
    Let $n,d$ be integers and let $\mathcal{S}$ be a subgroup of $\mathcal{E}(d)$.
    Define the map $D^\mathcal{S}_n: (\mathbb{R}^d)^n\rightarrow \mathbb{R}^{A(K_n(\mathcal{S}))}$
    so that for $e \in A(K_n(\mathcal{S}))$,
    \[
        D^\mathcal{S}_n(x^{(1)},\dots,x^{(n)})_e = \|x^{(\source(e))}-\psi_n(e)x^{(\target(e))}\|_2^2.
    \]
    The \emph{$\mathcal{S}$-symmetry-forced Cayley-Menger variety} $\cm_n^\mathcal{S}$
    is the Zariski closure of $D^\mathcal{S}_n((\mathbb{R}^d)^n)$.
\end{defn}

When $\mathcal{S}$ is a subgroup of $\mathcal{E}(d)$,
each $\mathcal{S}$-gain graph $(G,\phi)$ on vertex set $\{1,\dots,n\}$ can be viewed as a subset
of the coordinates of $\cm_n^\mathcal{S}$.
With this in mind, we say that $(G,\phi)$ is \emph{generically infinitesimally ($\mathcal{S}$-symmetry forced) rigid}
if $(G,\phi)$ is a spanning set of $\mathcal{M}(\cm_n^\mathcal{S})$.
Strictly speaking, this notion of generic infinitesimal symmetry forced rigidity generalizes
other notions in the literature (e.g.~\cite{jordan2016gain,ross2011geometricDissertation,schulze2011orbit}),
but the idea is essentially the same in all cases,
the only differences coming from restrictions on the group $\mathcal{S}$.
For the reader not familiar with this literature, we now defend this terminology.

An \emph{$\mathcal{S}$-symmetric motion} of a symmetric framework $(G,\phi,p)$ is a continuous function
$f: [0,1]\rightarrow (\mathbb{R}^d)^{V(G)}$ such that
$\|f(t)^{(\source(e))}-\phi(e)f(t)^{(\target(e))}\|_2^2 = \|p^{(\source(e))}-\phi(e)p^{(\target(e))}\|_2^2$
for all $e \in A(G)$ and $t \in [0,1]$.
Any $\mathcal{S}$-symmetric motion of $(G,\phi,p)$ that is also a motion of
$(K_n(\mathcal{S}),\psi_n,q)$ for all $q: V(G)\rightarrow \mathbb{R}^d$ is called \emph{trivial}.
An infinitesimal motion of $(G,\phi,p)$ is a vector $g \in (\mathbb{R}^d)^{V(G)}$
such that $\langle g^{(\source(e))}-\phi(e)g^{(\target(e))}, p^{(\source(e))}-\phi(e)p^{(\target(e))}\rangle = 0$
for all $e \in A(G)$.
If $f$ is a motion of $(G,\phi,p)$, then $f'(0)$ is an infinitesimal motion of $(G,\phi,p)$.
When $g = f'(0)$ for a trivial motion $f$ of $(G,\phi,p)$, then $g$ is said to be a \emph{trivial infinitesimal motion}.
The set of infinitesimal motions of a symmetric framework $(G,\phi,p)$ is a linear subspace of $(\mathbb{R}^d)^{V(G)}$
that always contains the set of trivial infinitesimal motions.
One says that $(G,\phi,p)$ is \emph{$\mathcal{S}$-symmetry forced infinitesimally rigid} if 
all its $\mathcal{S}$-symmetric infinitesimal motions are trivial.
Infinitesimal rigidity of $(G,\phi,p)$ is thus equivalent 
to the condition that its space of $\mathcal{S}$-symmetric infinitesimal motions has
the same dimension as the linear space of trivial infinitesimal motions.

It is a straightforward computation to see that the set of infinitesimal motions of $(G,\phi,p)$ is the nullspace
of the differential of $\pi_{G,\phi}\circ D_n^\mathcal{S}$ at $p$.
Thus $(G,\phi,p)$ is infinitesimally rigid when the submatrix of the differential of $\pi_{G,\phi}\circ D_n^\mathcal{S}$
has maximum possible rank as $(G,\phi)$ and $p$ are allowed to vary.
For fixed $(G,\phi)$, the rank of this differential is maximized at any $p$ such that
at least one maximal non-identically-zero minor of the Jacobian of $\pi_{G,\phi}\circ D_n^\mathcal{S}$
does not vanish when $p$ is plugged in.
The set of such $p$ is a Zariski-open subset of $(\mathbb{R}^d)^n$,
i.e.~such $p$ are \emph{generic}.
The dimension of the image of a polynomial map is equal to the rank of its differential at a generic point,
so $(G,\phi)$ is spanning in $\mathcal{M}(\cm_n^\mathcal{S})$ if and only if $(G,\phi,p)$
is infinitesimally rigid whenever $p$ is generic.
Thus it makes sense to call spanning sets of $\mathcal{M}(\cm_n^\mathcal{S})$ generically infinitesimally ($\mathcal{S}$-symmetry forced) rigid.

\subsection{Cyclic symmetry groups}
One of the main results of \cite{jordan2016gain,malestein2015frameworks} is a Laman-like characterization
of the minimally generically infinitesimally rigid $\mathcal{S}$-gain graphs when $\mathcal{S}$ is a finite
rotation subgroup of $\mathcal{E}(2)$.
We now offer a short proof of this result using Theorem~\ref{thm:mainResult}.

\begin{thm}[\cite{jordan2016gain,malestein2015frameworks}]\label{thm:cyclicSymmetryMatroid}
    Let $\mathcal{S}\subset\mathcal{E}(2)$ be a finite rotation subgroup.
    Then $\mathcal{M}(\cm_n^\mathcal{S}) = \mathcal{M}(2r_{\mathcal{M}(K_n(\mathcal{S}))}-1)$.
    In particular, if $|\mathcal{S}| > 1$, then an $\mathcal{S}$-gain graph $(G,\phi)$ is minimally generically infinitesimally rigid if and only if
    $G$ has $2|V(G)|-1$ edges, and for all subgraphs $G'$ of $G$,
    \[
        |E(G')| \le -1 + \sum_{F \in C(E(G'))} 2|V(F)|-2\alpha(F)
    \]
    where $\alpha(F) = 0$ when the sub gain graph of $(G,\phi)$
    with edge set $F$ contains an unbalanced cycle and $\alpha(F) = 1$ otherwise.
\end{thm}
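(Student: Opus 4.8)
The plan is to mimic exactly the structure of the proof of Theorem~\ref{thm:lamansMatroid}, using the Hadamard-product description of $\cm_n^\mathcal{S}$ together with Theorem~\ref{thm:mainResult}. First I would verify that, for a finite rotation subgroup $\mathcal{S}\subset\mathcal{E}(2)$, the symmetric Cayley--Menger variety $\cm_n^\mathcal{S}$ is a Hadamard product $L_n^\mathcal{S}\star L_n^\mathcal{S}$ of a single linear space with itself, where $L_n^\mathcal{S}\subseteq\mathbb{C}^{A(K_n(\mathcal{S}))}$ is a linear space whose algebraic matroid is the frame matroid $\mathcal{M}(K_n(\mathcal{S}))$. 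Concretely, I would apply the same change of variables $x_i\mapsto (t_i+s_i)/2$, $y_i\mapsto (t_i-s_i)/(2\sqrt{-1})$ used in Example~\ref{ex:cayleyMengerHadamard}: under this substitution a rotation by angle $\theta$ acts diagonally, scaling $t_i$ by $e^{i\theta}$ and $s_i$ by $e^{-i\theta}$. The coordinate of $\cm_n^\mathcal{S}$ attached to an arc $e$ with $\psi_n(e)=R_\theta$ becomes, after the computation, $(t_{\source(e)}-e^{i\theta}t_{\target(e)})(s_{\source(e)}-e^{-i\theta}s_{\target(e)})$. Hence $\cm_n^\mathcal{S}=L\star L'$ where $L$ is parameterized by $d_e=t_{\source(e)}-e^{i\theta_e}t_{\target(e)}$ and $L'$ by the conjugate expression; since complex conjugation is a coordinate-preserving automorphism of these parameterizations, $\mathcal{M}(L)=\mathcal{M}(L')$, and I claim this common matroid is $\mathcal{M}(K_n(\mathcal{S}))$. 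Neither $L$ nor $L'$ lies in a coordinate hyperplane, since for every arc $e$ the linear form defining $d_e$ is nonzero (it is nonzero even on loops because $e^{i\theta}\neq 1$ for a nontrivial rotation).

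The identification $\mathcal{M}(L)=\mathcal{M}(K_n(\mathcal{S}))$ is the main technical point, and the one I expect to be the chief obstacle. The cocircuit/independence structure of a linear space parameterized by $d_e=t_{\source(e)}-\zeta_e t_{\target(e)}$ (with $\zeta_e$ a root of unity depending on the arc label) is governed by which collections of these ``twisted coboundary'' vectors are linearly independent in $\mathbb{C}^{\{1,\dots,n\}}$. A set $S$ of arcs is dependent precisely when there is a linear dependence among the vectors $v_e := \chi_{\source(e)} - \zeta_e\,\chi_{\target(e)}$; tracing such a dependence around a connected subgraph, one finds it exists exactly when every connected component of $S$ is either a forest or contains a unique cycle whose total gain $\prod \zeta_e^{\pm1}$ equals $1$ (a balanced cycle), matching the independence rule of the frame matroid. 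This is a standard but slightly delicate calculation; I would either cite the known identification of such linear matroids with Dowling/frame matroids (cf.\ \cite{zaslavsky1989biased,zaslavsky1991biased,tanigawa2015matroids}) or give the short direct argument that a minimal dependence corresponds to a balanced cycle.

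Once $\cm_n^\mathcal{S}=L\star L$ with $\mathcal{M}(L)=\mathcal{M}(K_n(\mathcal{S}))=:M$ is established, Theorem~\ref{thm:mainResult} gives $\mathcal{M}(\cm_n^\mathcal{S})=\mathcal{M}(2r_M-1)$ immediately, proving the first sentence. For the ``in particular'' statement I would unwind $\mathcal{M}(2r_M-1)$ using the count axiom of Definition~\ref{defn:matroidFromSubmodular}: $(G,\phi)$ is independent iff $|E(G')|\le 2r_M(E(G'))-1$ for every subgraph $G'$. The rank function of the frame matroid decomposes over connected components $F\in C(E(G'))$ as $r_M(F)=|V(F)|-\alpha(F)$, where $\alpha(F)=1$ if the component is balanced (equivalently contains no unbalanced cycle) and $\alpha(F)=0$ otherwise; this is the classical formula for the rank of a frame matroid, which I would cite or derive. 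Substituting gives exactly the displayed inequality $|E(G')|\le -1+\sum_{F\in C(E(G'))}\bigl(2|V(F)|-2\alpha(F)\bigr)$. Finally, $(G,\phi)$ is a basis (hence minimally generically infinitesimally rigid, by the discussion preceding this theorem and Proposition-style reasoning analogous to Proposition~\ref{prop:rigidIffSpanning}) iff it is independent and $|E(G)|$ equals the rank of $\cm_n^\mathcal{S}$; since $\dim\cm_n^\mathcal{S}=\dim(L\star L)=2n-1$ when $|\mathcal{S}|>1$ (the space $L$ has dimension $n$, and the Hadamard product of two copies of a generic-enough $n$-dimensional space through the origin has dimension $2n-1$, the $-1$ coming from the common scaling torus), the basis condition becomes $|E(G)|=2n-1=2|V(G)|-1$, as claimed. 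The one case needing a remark is $|\mathcal{S}|=1$, where $L$ is the ordinary coboundary space, $\cm_n^\mathcal{S}=\cm_n^2$, and we recover Theorem~\ref{thm:lamansMatroid} with rank $2n-3$; the hypothesis $|\mathcal{S}|>1$ in the ``in particular'' clause is exactly what makes every loop contribute a nonzero, and in fact full-rank-at-that-vertex, coordinate, bumping the dimension from $2n-3$ to $2n-1$.
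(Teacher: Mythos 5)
Your proposal is correct and follows essentially the same route as the paper: the identical change of variables exhibits $\cm_n^\mathcal{S}$ as a Hadamard product of two linear spaces, Theorem~\ref{thm:mainResult} reduces everything to identifying their common matroid with $\mathcal{M}(K_n(\mathcal{S}))$ (which the paper, like you, handles by citation, namely to \cite[Lemma~6.10.11]{oxley2006matroid} after noting the loops at each vertex form a parallel class), and Zaslavsky's rank formula unwinds the count. The only blemishes are cosmetic---your sentence characterizing dependence of the twisted coboundary vectors actually states the independence condition, and minimal dependences are unbalanced bicyclic subgraphs as well as balanced cycles---but neither affects the argument.
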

\begin{proof}
    Theorem 2.1(j) in~\cite{zaslavsky1991biased} tells us that
    the rank function of the frame matroid $\mathcal{M}(G,\phi)$ is $r_{\mathcal{M}(G,\phi)}(S) = \sum_{F \in C(S)} V(F) - \alpha(F)$.
    Thus it now suffices to prove the first part of the theorem, i.e.~that
    $\mathcal{M}(\cm_n^\mathcal{S}) = \mathcal{M}(2r_{\mathcal{M}(K_n(\mathcal{S}))}-1)$.
    Since $\mathcal{S}$ is a finite rotation subgroup of $\mathcal{E}(2)$, we can express it as
    \[
        \mathcal{S} = \left\{\begin{pmatrix}
            \cos(\theta_k) & -\sin(\theta_k)\\
            \sin(\theta_k) & \cos(\theta_k)
        \end{pmatrix}:
        \theta_k = \frac{2k\pi}{|\mathcal{S}|}, k = 1,\dots,|\mathcal{S}|
        \right\}.
    \]
    Now, we may write
    \[
        \psi_n(e) = \begin{pmatrix}
            \cos(\theta_e) & -\sin(\theta_e)\\
            \sin(\theta_e) & \cos(\theta_e)
        \end{pmatrix}.
    \]
    If we write the coordinates of the domain $(\mathbb{C}^2)^n$ of $D_n^\mathcal{S}$ as $(x_i,y_i)_{i=1}^n$,
    then the parameterization $D_n^\mathcal{S}$ of $\cm_n^\mathcal{S}$ tells us that a generic
    point $d \in \cm_n^\mathcal{S}$ can be expressed as
    \[
        d_e = (x_{\source(e)} - \cos(\theta_e)x_{\target(e)} + \sin(\theta_e)y_{\target(e)})^2 +
        (y_{\source(e)} - \sin(\theta_e)x_{\target(e)} - \cos(\theta_e)y_{\target(e)})^2.
    \]
    We apply the following change of variables (letting $i$ denote $\sqrt{-1}$)
    \[
        x_u \mapsto \frac{x_u + y_u}{2} \qquad y_u \mapsto \frac{x_u - y_u}{2i},
    \]
    so that our parameterization becomes
    \[
        d_e = (x_{\source(e)} - \exp(i\theta_e) x_{\target(e)})(y_{\source(e)} - \exp(-i\theta_e) y_{\target(e)}).
    \]
    Thus we have expressed $\cm_n^\mathcal{S}$ as the Hadamard product of two linear spaces
    $L,L'\subseteq \mathbb{C}^{A(K(\mathcal{S}))}$
    respectively parameterized as $d_{e} = x_{\source(e)} - \exp(i\theta_e) x_{\target(e)}$
    and $d_e = y_{\source(e)} - \exp(-i\theta_e) y_{\target(e)}$.
    Theorem~\ref{thm:mainResult} now implies that it suffices to show that
    $\mathcal{M}(L) = \mathcal{M}(L') = \mathcal{M}(K_n(\mathcal{S}))$.
    This follows from \cite[Lemma 6.10.11]{oxley2006matroid} by noting that all the loop edges
    in $K_n^\mathcal{S}$ incident to a single vertex form a parallel class.
\end{proof}

\section{Symmetry groups with translations}\label{section:symmetryGroupsWithTranslation}
The punchline of this section is Theorem~\ref{thm:mainTheoremTranslationsAndRotations} which gives a combinatorial characterization
of symmetry-forced infinitesimal rigidity in the case that the symmetry group is a subgroup of $\mathbb{R}^2\rtimes SO(2)$,
i.e.~consists of rotations and translations.
The more technical parts of this section are powered by an extremely lucky coincidence,
namely that the action of $SO(2)$ on $\mathbb{R}^2$ can be modeled by complex arithmetic.
The corresponding symmetric Cayley-Menger varieties can be expressed as Hadamard products of \emph{affine} spaces,
but not linear spaces.
Thus, our first order of business is Theorem~\ref{thm:mainAffine},
a generalization of Theorem~\ref{thm:mainResult} that can handle affine spaces.
For this, we will require basic results about elementary lifts of matroids.

\subsection{Elementary lifts of matroids}
A set of circuits $\mathcal{C}$ of a matroid $M$ is called a \emph{linear class}
if whenever $C_1 \cup C_2 \in \mathcal{C}$ satisfy $r_M(C_1 \cup C_2) = |C_1 \cup C_2|-2$,
then $C_3 \in \mathcal{C}$ whenever $C_3$ is a circuit satisfying $C_3 \subset C_1 \cup C_2$.
Given a matroid $M$ on ground set $E$ and a linear class of circuits $\mathcal{C}$,
a set $S \subseteq E$ is \emph{$\mathcal{C}$-balanced} if every circuit contained in $S$ is in $\mathcal{C}$.

An \emph{elementary coextension} of a matroid $M$ is a matroid $N$ such that $N\contract \{e\} = M$
for an element $e$ in the ground set of $N$.
An \emph{elementary lift} of $M$ is a matroid of the form $N\setminus\{e\}$
such that $M = N\contract\{e\}$.
The following proposition tells us that the elementary lifts of a given matroid are characterized by its linear classes.

\begin{prop}[{\cite[Proposition 3.1]{whittle1989generalisation}}]\label{prop:liftsFromLinearClasses}
    Let $M$ be a matroid on ground set $E$ and let $\mathcal{C}$ be a linear class of circuits of $M$.
    Then the function $r: 2^E\rightarrow \mathbb{Z}$ defined by
    \[
        r_\mathcal{C}(S) := \begin{cases}
            r_M(S) & \textnormal{if} \ S \ \textnormal{is } \mathcal{C}\textnormal{-balanced} \\
            1+r_M(S) & \textnormal{otherwise}
        \end{cases}
    \]
    is the rank function of a lift of $M$.
    Moreover, every lift of $M$ can be obtained in this way.
\end{prop}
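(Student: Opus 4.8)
The plan is to handle the proposition's two assertions in turn. For the first, I would verify that $r_{\mathcal C}$ meets the standard criteria for a matroid rank function (nonnegative, $r(\emptyset)=0$, monotone, submodular, and $r(S)\le r(S\cup\{e\})\le r(S)+1$), and then realize the resulting matroid $M':=\mathcal M(r_{\mathcal C})$ as $N\setminus\{e_0\}$ for a suitable one-element coextension $N$ of $M$. For the second, I would start from an arbitrary elementary lift of $M$, read a candidate linear class off it, and check that it reproduces the lift.

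Set $\delta(S):=r_{\mathcal C}(S)-r_M(S)$, so $\delta(S)\in\{0,1\}$, with $\delta(S)=1$ exactly when $S$ contains a circuit of $M$ outside $\mathcal C$. Since that is an upward-closed condition, $\delta$ is monotone, and monotonicity of $r_{\mathcal C}$ follows from that of $r_M$. Nonnegativity and $r_{\mathcal C}(\emptyset)=0$ are clear. For the unit-increase bound, the only case needing thought is when $S$ is $\mathcal C$-balanced while $S\cup\{e\}$ is not: then the witnessing circuit $C\notin\mathcal C$ inside $S\cup\{e\}$ must use $e$, so $e\in\cl_M(C\setminus\{e\})\subseteq\cl_M(S)$ and hence $r_M(S\cup\{e\})=r_M(S)$, giving $r_{\mathcal C}(S\cup\{e\})=r_M(S)+1=r_{\mathcal C}(S)+1$; the remaining cases reduce to the unit-increase bound for $r_M$.

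The substance of the first assertion is submodularity. Writing the target inequality as
\[
\delta(S\cup T)+\delta(S\cap T)\ \le\ \bigl(r_M(S)+r_M(T)-r_M(S\cup T)-r_M(S\cap T)\bigr)+\delta(S)+\delta(T)
\]
and using $\delta(S\cap T)\le\delta(S),\delta(T)\le\delta(S\cup T)$ together with submodularity of $r_M$, one checks that every case is immediate except the one in which $S$, $T$, and $S\cap T$ are $\mathcal C$-balanced but $S\cup T$ is not; there one must prove that $(S,T)$ is \emph{not} a modular pair of $M$, i.e.\ that the submodularity inequality for $r_M$ is strict. This is the step I expect to be the main obstacle, and it is where the linear-class hypothesis is used. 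Assuming $(S,T)$ were modular: because $S$ and $T$ are $\mathcal C$-balanced, the circuit $C\subseteq S\cup T$ witnessing that $S\cup T$ is unbalanced satisfies $C\not\subseteq S$ and $C\not\subseteq T$, so a standard structural fact about modular pairs provides circuits $C_1\subseteq S$ and $C_2\subseteq T$ with $C\subseteq C_1\cup C_2$ and $C_1\cup C_2$ of $M$-nullity two; but $C_1,C_2\in\mathcal C$ (as $S,T$ are balanced), so the defining property of a linear class forces $C\in\mathcal C$, a contradiction. Hence $r_{\mathcal C}$ is a matroid rank function. To see $M':=\mathcal M(r_{\mathcal C})$ is an elementary lift of $M$, I would take $N$ to be the matroid on $E\cup\{e_0\}$ whose independent sets are the $M'$-independent subsets of $E$ together with the sets $I\cup\{e_0\}$ for $I\subseteq E$ independent in $M$; the exchange axiom for $N$ follows from a short check (the only non-immediate case uses the unit-increase property of $r_{\mathcal C}$ and the identity that a subset of $E$ is $M$-independent exactly when it is $M'$-independent and $\mathcal C$-balanced), and by construction $N\setminus\{e_0\}=M'$ and $N\contract\{e_0\}=M$.

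For the converse, let $M'=N\setminus\{e_0\}$ with $M=N\contract\{e_0\}$. If $e_0$ is a loop or a coloop of $N$ then $M'=M$, which corresponds to taking $\mathcal C$ to be the (trivially linear) class of all circuits of $M$; so assume $e_0$ is neither. Then $r_N(\{e_0\})=1$, and comparing $r_{M'}(S)=r_N(S)$ with $r_M(S)=r_N(S\cup\{e_0\})-1$ shows $r_{M'}(S)-r_M(S)\in\{0,1\}$, equal to $0$ precisely when $e_0\notin\cl_N(S)$. Define $\mathcal C:=\{C:C\text{ is a circuit of }M\text{ that is dependent in }M'\}$; for a circuit $C$ this is exactly the condition $e_0\notin\cl_N(C)$. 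I claim that for all $S\subseteq E$, $S$ is $\mathcal C$-balanced iff $e_0\notin\cl_N(S)$, which then yields $r_{M'}=r_{\mathcal C}$: one direction is monotonicity of $\cl_N$, and for the other, if $e_0\in\cl_N(S)$ pick $T\subseteq S$ minimal with $e_0\in\cl_N(T)$; then $T$ is $N$-independent and $T\cup\{e_0\}$ is a circuit of $N$, so $T$ is a circuit of $M$ with $e_0\in\cl_N(T)$, whence $T\notin\mathcal C$ and $S$ is not $\mathcal C$-balanced. Finally, $\mathcal C$ is a linear class: given distinct $C_1,C_2\in\mathcal C$ with $r_M(C_1\cup C_2)=|C_1\cup C_2|-2$, each $C_i$ is again a circuit of $M'$, so $r_{M'}(C_1\cup C_2)\le|C_1\cup C_2|-1$ and hence equals $|C_1\cup C_2|-2$ or $|C_1\cup C_2|-1$; the larger value would make $M'$ restricted to $C_1\cup C_2$ a matroid of nullity one, which has a \emph{unique} circuit — impossible, as it contains the two distinct circuits $C_1$ and $C_2$. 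Therefore $r_{M'}(C_1\cup C_2)=r_M(C_1\cup C_2)$, so $C_1\cup C_2$ is $\mathcal C$-balanced, and hence so is every circuit contained in it, as required.
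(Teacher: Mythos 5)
The paper does not prove this proposition itself (it is quoted from Whittle), so your attempt is measured on its own terms. The architecture is right and most of it checks out: the reduction of submodularity to the single case ``$S$, $T$, $S\cap T$ balanced, $S\cup T$ unbalanced,'' the unit-increase argument, the construction of the coextension $N$, and the entire converse direction (including the verification that your $\mathcal{C}$ is a linear class via the uniqueness of the circuit in a nullity-one restriction) are all correct.

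The genuine gap is the one step you yourself flag as the main obstacle: the ``standard structural fact'' that if $(S,T)$ is a modular pair and $C\subseteq S\cup T$ is a circuit with $C\not\subseteq S$ and $C\not\subseteq T$, then there are circuits $C_1\subseteq S$ and $C_2\subseteq T$ with $C\subseteq C_1\cup C_2$ and $r_M(C_1\cup C_2)=|C_1\cup C_2|-2$. This is not a standard fact, and it is exactly where all the content of the proposition lives, since it is the only place the linear-class hypothesis is used. What modularity actually gives you cheaply is weaker: choosing a basis $B_0$ of $S\cap T$ and extending it to bases $B_S$ of $S$ and $B_T$ of $T$, modularity makes $B:=B_S\cup B_T$ a basis of $S\cup T$ in which every fundamental circuit $C_e$ ($e\in (S\cup T)\setminus B$) lies entirely in $S$ or entirely in $T$, and one can show $C\subseteq\bigcup_{e\in C\setminus B}C_e$. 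But $|C\setminus B|$ may exceed $2$, so this covers $C$ by possibly many circuits of $\mathcal{C}$, whereas the linear-class axiom is only a closure condition on \emph{modular pairs} of circuits. Bridging that gap --- either by showing an adapted basis can always be chosen with $|C\setminus B|=2$, or by an induction that iterates the pairwise axiom --- is the nontrivial content of the theorem; it is the circuit-side dual of the step in Crapo's single-element extension theorem where closure under modular pairs of hyperplanes is upgraded to a full modular cut. As written, the proof assumes precisely what needs to be proved.
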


Given a matroid $M$ and a linear class $\mathcal{C}$ of circuits of $M$,
we let $M^\mathcal{C}$ denote the elementary lift of $M$ corresponding to $\mathcal{C}$
as in Proposition~\ref{prop:liftsFromLinearClasses}.

\subsection{Algebraic matroids of Hadamard products of affine spaces}
The set of matrices with entries in a field $\mathbb{K}$ whose rows are
indexed by a set $E$ and columns indexed by a set $F$ will be denoted $\mathbb{K}^{E\times F}$.
When $E$ or $F$ is $\{1,\dots,r\}$, we will simply write $\mathbb{K}^{E\times r}$ or $\mathbb{K}^{r\times F}$.

Let $V\subseteq \mathbb{C}^{E}$ be an affine space.
Then there exists $A(V) \in \mathbb{C}^{E\times r}$ and $b(V) \in \mathbb{C}^{E}$ such that $b(V)^TA(V) = 0$
and $V = \{A(V)x + b(V): x \in \mathbb{C}^r\}$.
Since $b(V)^TA(V) = 0$, $b(V)$ is uniquely defined.
Only the column span of $A(V)$ matters to us, so the fact that $A(V)$ is not uniquely defined will not be a problem.
Define the linear space $L(V)\subseteq \mathbb{C}^{E\cup \{*\}}$ as follows
\begin{equation}\label{eq:L(V)}
    L(V) := \left\{
    \begin{pmatrix}
        A(V) & b(V) \\ 0 & 1
    \end{pmatrix}
    \begin{pmatrix}
        x \\ \lambda
    \end{pmatrix}:
    x \in \mathbb{C}^r, \lambda \in \mathbb{C}
    \right\}.
\end{equation}
Note that $\mathcal{M}(V)$ is the matroid of linear independence on the rows of $A(V)$,
and $\mathcal{M}(L(V))$ is the matroid of linear independence on the rows of the block matrix
defining $L(V)$ in \eqref{eq:L(V)}.
It then follows that $\mathcal{M}(L(V))$ is an elementary coextension of $\mathcal{M}(V)$.
The corresponding elementary lift will play such an important role that we give it its own notation.

\begin{defn}
    Given an affine space $V\subseteq \mathbb{C}^E$,
    we denote the elementary lift $\mathcal{M}(L(V))\setminus \{*\}$ of $\mathcal{M}(V)$
    by $\mathcal{M}^L(V)$
    and the corresponding linear class of $\mathcal{M}(V)$ by $\mathcal{C}(V)$.
\end{defn}

The following lemma gives an explicit description of $\mathcal{C}(V)$.
Given a subset $S$ of the rows of a matrix $M$, we let $M_S$ denote the row-submatrix of $M$ corresponding to $S$.

\begin{lemma}\label{lemma:liftOfMatroidOfAffineSpace}
    Let $V \subseteq \mathbb{C}^E$ be an affine space
    and let $\mathcal{C}$ consist of the circuits $C$ of $\mathcal{M}(V)$
    such that $b(V)_C \in {\rm span}(A(V)_C)$.
    Then $\mathcal{C}(V) = \mathcal{C}$.
\end{lemma}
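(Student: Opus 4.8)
The plan is to unwind the definitions so that membership of a circuit $C$ in $\mathcal{C}(V)$ becomes a concrete linear-algebraic condition, and then check that this condition is exactly $b(V)_C \in \mathrm{span}(A(V)_C)$. Recall that by definition $\mathcal{C}(V)$ is the linear class of $\mathcal{M}(V)$ corresponding, via Proposition~\ref{prop:liftsFromLinearClasses}, to the elementary lift $\mathcal{M}^L(V) = \mathcal{M}(L(V))\setminus\{*\}$. A circuit $C$ of $\mathcal{M}(V)$ lies in $\mathcal{C}(V)$ precisely when $C$ is still dependent (equivalently, still a circuit, since deleting $*$ cannot shrink its rank below $|C|-1$) in $\mathcal{M}^L(V)$; by Proposition~\ref{prop:liftsFromLinearClasses} this happens iff $r_{\mathcal{C}(V)}(C) = r_{\mathcal{M}(V)}(C) = |C| - 1$, i.e.\ iff $C$ is $\mathcal{C}(V)$-balanced, which is the same as saying $C \in \mathcal{C}(V)$. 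So the real content is to characterize, in terms of the matrices $A(V)$ and $b(V)$, when a circuit $C$ of $\mathcal{M}(V)$ remains a circuit after deleting the column $*$ from the block matrix defining $L(V)$ in \eqref{eq:L(V)}.

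The key computation is the following. The matroid $\mathcal{M}(L(V))$ is linear independence on the rows of
\[
    B := \begin{pmatrix} A(V) & b(V) \\ 0 & 1 \end{pmatrix},
\]
whose rows are indexed by $E \cup \{*\}$, and $\mathcal{M}^L(V) = \mathcal{M}(L(V))\setminus\{*\}$ is linear independence on the rows of $B$ indexed by $E$, i.e.\ on the rows of $(A(V)\mid b(V))$. Now let $C$ be a circuit of $\mathcal{M}(V)$, so that the rows $A(V)_C$ are minimally linearly dependent: there is a unique (up to scaling) vector $\mu$ supported exactly on $C$ with $\mu^T A(V)_C = 0$. The rows of $(A(V)\mid b(V))$ indexed by $C$ are dependent iff this same $\mu$ satisfies $\mu^T b(V)_C = 0$ as well. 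I will show this last condition is equivalent to $b(V)_C \in \mathrm{span}(A(V)_C)$: indeed $\mu$ spans the (one-dimensional) left kernel of $A(V)_C$, so $\mu^T b(V)_C = 0$ iff $b(V)_C$ is orthogonal to that left kernel iff $b(V)_C$ lies in the row space of $A(V)_C$, which — since we are looking at row-submatrices indexed by $C$ — is exactly $\mathrm{span}(A(V)_C)$ in the present notation. Tracing this back: $C$ remains dependent in $\mathcal{M}^L(V)$ (hence is a circuit there, by the rank comparison above) iff $b(V)_C \in \mathrm{span}(A(V)_C)$, which is precisely the defining condition for $C \in \mathcal{C}$.

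Two small points need care. First, one must confirm that if $C$ is a circuit of $\mathcal{M}(V)$ that stays dependent in $\mathcal{M}^L(V)$, then it is in fact a \emph{circuit} of $\mathcal{M}^L(V)$ and not merely dependent — this follows because every proper subset of $C$ is independent in $\mathcal{M}(V)$, hence independent in the lift $\mathcal{M}^L(V)$ (a lift can only increase rank), so $C$ is minimally dependent. Second, one should note that $\mathcal{C}$ as defined in the statement is automatically a linear class: this is either checked directly from the linear-class axiom using the $r_M(C_1\cup C_2) = |C_1\cup C_2|-2$ hypothesis, or — more cleanly — deduced a posteriori, since we have identified $\mathcal{C}$ with $\mathcal{C}(V)$, which is a linear class by construction. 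The main obstacle, such as it is, is purely bookkeeping: keeping straight the three ambient spaces ($\mathbb{C}^r$, $\mathbb{C}^E$, $\mathbb{C}^{E\cup\{*\}}$) and the fact that $\mathcal{M}(V)$ is linear independence on \emph{rows} of $A(V)$ while $V$ itself lives in the column picture; once that dictionary is fixed the argument is a one-line orthogonality statement about the left kernel of $A(V)_C$.
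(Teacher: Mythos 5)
Your argument is correct and is precisely the expansion of the paper's one-line proof, which simply cites Proposition~\ref{prop:liftsFromLinearClasses} together with the presentation of $L(V)$ in \eqref{eq:L(V)}: a circuit $C$ of $\mathcal{M}(V)$ lies in $\mathcal{C}(V)$ iff the rows of $(A(V)\mid b(V))$ indexed by $C$ remain dependent, iff the unique left-kernel vector $\mu$ of $A(V)_C$ satisfies $\mu^T b(V)_C=0$. One terminological nit: the orthogonal complement of the left kernel of $A(V)_C$ is its \emph{column} space, not its row space, though you correctly identify the relevant object with $\mathrm{span}(A(V)_C)$ in the paper's notation.
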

\begin{proof}
    This follows from Proposition~\ref{prop:liftsFromLinearClasses}
    by noting the presentation of $L(V)$ in~\eqref{eq:L(V)}.
\end{proof}

For $\alpha \in \mathbb{C}$,
define $H_\alpha\subseteq \mathbb{C}^{E \cup \{*\}}$ by $H_\alpha := \{x \in \mathbb{C}^{E \cup \{*\}}: x_{*} = \alpha\}$.
Then $V = \pi_E(H_1 \cap L(V))$.
Letting $r_L$ and $r_V$ denote the rank functions of $\mathcal{M}(L(V))$ and $\mathcal{M}(V)$, we have
\[
    r_L(S) = \begin{cases}
        r_V(S) & \textnormal{if } * \notin S \textnormal{ and } b(V)_S \in {\rm span}(A(V)_S) \\
        1+r_V(S\setminus \{*\}) & \textnormal{otherwise}.
    \end{cases}
\]

\begin{lemma}\label{lemma:tropOfAffine}
    Let $E$ be a finite set and let $V \subseteq \mathbb{C}^E$ be an affine space.
    Then $\trop(V) = \pi_E(\trop(L(V)) \cap H_0)$.
\end{lemma}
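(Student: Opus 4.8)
The plan is to prove the set equality $\trop(V) = \pi_E(\trop(L(V)) \cap H_0)$ by relating tropicalizations via the defining equations and the explicit presentation of $L(V)$ from \eqref{eq:L(V)}. First I would recall that $V = \pi_E(H_1 \cap L(V))$, so that points of $V$ arise from points of $L(V)$ whose last coordinate is $1$; tropically, the natural analogue replaces the hyperplane $\{x_* = 1\}$ by the hyperplane $\{x_* = 0\}$, reflecting that $\val(1) = 0$. The core computation is to compare ideals: if $I(V) \subseteq \mathbb{C}[x_e : e \in E]$ is the ideal of $V$, then the ideal $I(L(V)) \subseteq \mathbb{C}[x_e : e \in E, x_*]$ is obtained from $I(V)$ by \emph{homogenizing} in a certain sense — more precisely, $L(V)$ is the affine cone-like object whose intersection with $\{x_* = 1\}$ projects to $V$, so $I(L(V))$ is generated by the $x_*$-homogenizations of a generating set of $I(V)$ together with nothing else (since $L(V)$ is a linear space, $I(L(V))$ is generated by linear forms, and these are exactly the homogenizations of the affine-linear forms cutting out the affine span of $V$).

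With that algebraic relationship in hand, the key steps are: (1) show $\pi_E(\trop(L(V)) \cap H_0) \subseteq \trop(V)$, and (2) show the reverse inclusion. For (1), take $\omega \in \trop(L(V))$ with $\omega_* = 0$; given any $f \in I(V)$, its homogenization $\tilde f \in I(L(V))$ satisfies that $\initial_\omega \tilde f$ has no monomial, and setting $x_* = 0$ — equivalently, using $\omega_* = 0$ so the $x_*$-degree does not affect the $\omega$-weight — recovers $\initial_{\pi_E(\omega)} f$ up to terms killed by the dehomogenization; a careful bookkeeping argument shows $\initial_{\pi_E(\omega)} f$ is still monomial-free. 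For (2), conversely, given $\omega' \in \trop(V)$, lift it to $\omega = (\omega', 0) \in \mathbb{R}^{E \cup \{*\}}$ and check $\omega \in \trop(L(V))$: since $I(L(V))$ is generated by the linear homogenizations $\tilde g$ of affine forms $g$ vanishing on $V$, and $\{\tilde g\}$ is automatically a tropical basis (linear ideals have their generators as a tropical basis), it suffices to check $\initial_\omega \tilde g$ is monomial-free for each such $g$, which follows from monomial-freeness of $\initial_{\omega'} g$ together with $\omega_* = 0$. Throughout I would lean on Proposition~\ref{prop:algMatroidOfLinearSpaceIsBergmanFanOfMatroid} and the concrete description of $L(V)$ to avoid delicate Gröbner-basis arguments where possible — in fact an alternative route is to identify $\trop(L(V))$ with the Bergman fan $\berg(\mathcal{M}(L(V)))$ and argue combinatorially, but the direct ideal-theoretic approach is cleaner for an affine space that may not be in general position.

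The main obstacle I anticipate is the bookkeeping in step (1): passing between a polynomial $f \in I(V)$ and its homogenization $\tilde f$, one must track how the initial form behaves when $\omega_* = 0$, because homogenization pads lower-degree monomials with powers of $x_*$, and setting $x_* = 0$ (or exploiting $\omega_* = 0$) must not accidentally annihilate the entire initial form. The subtlety is that $\initial_\omega \tilde f$ could a priori be divisible by $x_*$, in which case its image under $\pi_E$-dehomogenization is zero rather than $\initial_{\omega'} f$. Handling this requires observing that because $\omega_* = 0$, the $\omega$-weight of a monomial $\bfx^\alpha x_*^k$ in $\tilde f$ equals the $\omega'$-weight of $\bfx^\alpha$, and the top-degree part of $\tilde f$ (the part with $k = 0$) is exactly the top-degree-in-total-degree part of $f$ — but one genuinely needs $f$ to be homogeneous, or to decompose $f$ by degree, to make this match up. The honest fix is to work with a homogeneous generating set, or to note that $\trop$ only depends on $I(V)$ as a whole and argue at the level of the whole ideal rather than single polynomials, invoking the existence of a tropical basis for $V$. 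Once that is navigated, both inclusions fall out, and the projection $\pi_E$ commutes with everything because $x_*$ has been pinned to weight $0$.
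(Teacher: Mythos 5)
Your argument is correct in substance, but it takes a genuinely different route from the paper. The paper's proof is a two-line appeal to big machinery: since $V=\pi_E(H_1\cap L(V))$ and $\trop(H_1)=H_0$, it suffices to show that $\trop(L(V))\cap H_0$ equals the \emph{stable} intersection of $\trop(L(V))$ with $H_0$, which by the Osserman--Payne lifting theorem (in the form of Jensen--Yu) computes $\trop(H_1\cap L(V))$; the transversality hypothesis is verified by noting that every cone of $\trop(L(V))$ contains the all-ones lineality direction, hence no cone lies inside $H_0$. Your proof instead works directly with initial ideals via $x_*$-homogenization, and it goes through: with $\omega_*=0$ the $\omega$-weight of $\bfx^\alpha x_*^k$ equals the $\pi_E(\omega)$-weight of $\bfx^\alpha$, and homogenization is a bijection on monomials, so $\initial_\omega \tilde f$ is exactly the homogenization of $\initial_{\pi_E(\omega)}f$ for \emph{any} $f\in I(V)$ --- your closing worry that you need $f$ homogeneous, or that dehomogenization could annihilate the initial form, is unfounded, since a non-monomial cannot homogenize to a monomial and vice versa. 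Two points deserve care. First, for the inclusion $\pi_E(\trop(L(V))\cap H_0)\subseteq\trop(V)$ you should pass through the standard fact that $\initial_{\omega'}I(V)$ contains a monomial if and only if $\initial_{\omega'}f$ \emph{is} a monomial for some single $f\in I(V)$; checking generators is not enough here, but checking all $f$ is, and that is what your homogenization argument does. Second, your parenthetical ``linear ideals have their generators as a tropical basis'' is false for an arbitrary generating set of linear forms; what is true, and what you actually use, is that the set of \emph{all} linear forms in $I(L(V))$ contains the circuit forms and hence is a tropical basis. What the two approaches buy: the paper's is shorter and slots into the stable-intersection framework already cited elsewhere, while yours is elementary and self-contained, avoiding the lifting theorem entirely at the cost of the bookkeeping above.
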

\begin{proof}
    Since $\trop(H_1) = H_0$
    it suffices to show that $\trop(L(V)) \cap H_0$ is the \emph{stable intersection}
    of $\trop(L(V))$ and $H_0$ (\cite{osserman2013lifting}, but see \cite{jensen2016stable} for a more elementary presentation).
    By \cite[Lemma~2.6]{jensen2016stable}, it is enough to note that
    for any cone $\sigma$ of $\trop(L(V))$, the Minkowski sum $\sigma + H_0$ is $(|E|+1)$-dimensional.
    To see that this is true, note that if $\sigma + H_0$ is not $(|E|+1)$-dimensional,
    then $\sigma \subseteq H_0$. But this is impossible because the lineality space of
    every cone in $\trop(L(V))$ is spanned by the all-ones vector.
\end{proof}

\begin{lemma}\label{lemma:affineSpaceProduct}
    If $U,V\subseteq \mathbb{C}^E$ are finite-dimensional affine spaces, then
    \[
        \mathcal{M}(U\star V) = \mathcal{M}(L(U)\star L(V))\contract \{*\}.
    \]
\end{lemma}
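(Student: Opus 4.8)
The plan is to relate the Hadamard product $U \star V$ to the Hadamard product of the associated linear spaces $L(U)$ and $L(V)$ by tracking how the coordinate $*$ interacts with the Hadamard product operation. The key geometric observation is that $L(U) \star L(V)$ is again a linear space in $\mathbb{C}^{E \cup \{*\}}$, and intersecting it with the hyperplane $\{x_* = 1\}$ and projecting away $*$ should recover $U \star V$: indeed, if $x \in L(U)$ has $x_* = 1$ then $\pi_E(x) \in U$ (this is how $U = \pi_E(H_1 \cap L(U))$ was defined), similarly for $V$, and the Hadamard product of two such points has $*$-coordinate $1 \cdot 1 = 1$ and $E$-part equal to the Hadamard product of the $E$-parts. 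So the first step is to prove the scheme-theoretic or set-theoretic identity $U \star V = \pi_E\bigl((L(U) \star L(V)) \cap H_1\bigr)$, which is essentially a direct computation once one checks that every point of $(L(U) \star L(V)) \cap H_1$ arises (up to Zariski closure) as such a Hadamard product.

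Next I would pass to tropicalizations and invoke the matroid-theoretic machinery already set up. By Lemma~\ref{lemma:HadamardProductMinkowskiSum}, $\trop(L(U) \star L(V)) = \trop(L(U)) + \trop(L(V))$. By Lemma~\ref{lemma:tropOfAffine} applied to the affine space $U \star V$ — or rather, by the same stable-intersection argument used there — one has $\trop(U \star V) = \pi_E\bigl(\trop(L(U) \star L(V)) \cap H_0\bigr)$, provided $U \star V$ is genuinely an affine space and $L(U \star V)$ relates correctly to $L(U) \star L(V)$. The cleaner route is: since $L(U) \star L(V)$ is a linear space with lineality containing the all-ones vector (because each of $L(U)$, $L(V)$ does, and Hadamard products of linear spaces retain this as long as neither lies in a coordinate hyperplane — here the $*$ column guarantees the Bergman-fan hypotheses), the stable intersection of $\trop(L(U) \star L(V))$ with $H_0$ is the honest intersection, by the dimension argument in the proof of Lemma~\ref{lemma:tropOfAffine}. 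Then contraction of $\{*\}$ at the matroid level corresponds exactly to this "slice by $H_0$ and project" operation on Bergman fans, via Proposition~\ref{prop:algMatroidOfLinearSpaceIsBergmanFanOfMatroid}. Combining these, $\trop(U \star V) = \pi_E\bigl((\berg(\mathcal{M}(L(U)\star L(V)))) \cap H_0\bigr)$, and by Lemma~\ref{lemma:tropPreservesAlgMatroid} together with the fact that slicing a Bergman fan by $H_0$ and projecting realizes matroid contraction, the algebraic matroid of $U \star V$ equals $\mathcal{M}(L(U) \star L(V))\contract \{*\}$.

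The main obstacle I anticipate is the first step — establishing that $L(U) \star L(V)$ is itself (a linear space and that) its slice at $x_* = 1$ really projects onto $U \star V$ rather than something smaller, and dually that the Zariski closures behave well under this slicing. Concretely, one must rule out the degeneracy where $(L(U) \star L(V)) \cap H_1$ is empty or lower-dimensional than expected; this is where the hypothesis that we are working with genuine affine spaces (so that the $*$-column of the presentation~\eqref{eq:L(V)} really contributes a $1$) is used, and it parallels the non-degeneracy input in Lemma~\ref{lemma:tropOfAffine} that no cone of $\trop(L(V))$ lies inside $H_0$. A secondary subtlety is that $L(U) \star L(V)$ need not equal $L(U \star V)$ on the nose — they have the same tropicalization and the same matroid after contracting $*$, which is all we need, but one should phrase the argument so as not to accidentally assert the stronger equality. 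Once the slicing identity and the Bergman-fan translation of contraction are in hand, the conclusion follows by assembling Lemmas~\ref{lemma:tropPreservesAlgMatroid}, \ref{lemma:HadamardProductMinkowskiSum}, and \ref{lemma:tropOfAffine} with Proposition~\ref{prop:algMatroidOfLinearSpaceIsBergmanFanOfMatroid}.
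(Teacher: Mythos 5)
Your proposal follows essentially the same route as the paper's proof: tropicalize, write $\trop(L(U)\star L(V))$ as the Minkowski sum $\berg(\mathcal{M}(L(U)))+\berg(\mathcal{M}(L(V)))$ via Lemma~\ref{lemma:HadamardProductMinkowskiSum} and Proposition~\ref{prop:algMatroidOfLinearSpaceIsBergmanFanOfMatroid}, identify $\trop(U\star V)$ with $\pi_E\bigl(H_0\cap\trop(L(U)\star L(V))\bigr)$ by the stable-intersection argument of Lemma~\ref{lemma:tropOfAffine}, and read off the contraction of $*$ via Lemma~\ref{lemma:tropPreservesAlgMatroid}. One incidental claim is false and should be excised: $L(U)\star L(V)$ is \emph{not} in general a linear space (e.g.\ $L\star L=\cm_n^2$ in Proposition~\ref{prop:cmHadamardProduct} is cut out by polynomials of higher degree), and likewise $U\star V$ is not in general an affine space, so $L(U\star V)$ need not even be defined. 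Neither slip is fatal, because the argument only ever uses the tropicalization of $L(U)\star L(V)$, and the property actually needed --- that every cone of $\trop(L(U)\star L(V))$ contains the all-ones vector in its lineality space, so no cone lies in $H_0$ and the stable intersection with $H_0$ is the honest intersection --- follows from the Minkowski-sum description together with the corresponding property of each Bergman fan, exactly as you note parenthetically; so phrase the justification that way rather than via linearity. Finally, the lemma is stated for possibly infinite $E$, so you should add the routine reduction to the finite case (restrict to a candidate independent set $I$, using that all matroids involved have finite rank), as the tropical machinery is set up only for finite $E$.
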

\begin{proof}
    Assume that $E$ is finite.
    Lemmas~\ref{lemma:HadamardProductMinkowskiSum} and~\ref{lemma:tropOfAffine},
    and Proposition~\ref{prop:algMatroidOfLinearSpaceIsBergmanFanOfMatroid} imply
    \[
        \trop(U \star V) = \pi_E(H_0 \cap (\berg(\mathcal{M}(L(U))) + \berg(\mathcal{M}(L(V))))).
    \]
    It follows from this that $I$ is independent in $\mathcal{M}(U\star V)$
    if and only if $I \cup\{*\}$ is independent in $\mathcal{M}(L(U) \star L(V))$.
    
    Now assume $E$ is infinite.
    Then $I$ is independent in $\mathcal{M}(U\star V)$ if and only if
    $I$ is independent in $\mathcal{M}(U\star V)|I$.
    Since $\mathcal{M}(U\star V)$ has finite rank, the result now follows from the finite case.
\end{proof}

\begin{lemma}\label{lemma:contractSubmodular}
    Let $f: 2^E\rightarrow \mathbb{Z}$ be increasing, submodular, and bounded.
    For $e \in E$, define $g_e: 2^{E\setminus e}\rightarrow \mathbb{Z}$ by $g_e(S) = \min\{f(S),f(S\cup \{e\})-1\}$.
    Then $\mathcal{M}(f)\contract e = \mathcal{M}(g_e)$.
\end{lemma}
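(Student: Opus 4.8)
The plan is to verify two things in turn: first, that $g_e$ satisfies the hypotheses of Definition~\ref{defn:matroidFromSubmodular}, so that $\mathcal{M}(g_e)$ is a genuine matroid; and second, that $\mathcal{M}(g_e)$ and $\mathcal{M}(f)\contract e$ have literally the same independent sets. For the first point, boundedness is immediate since $g_e(S)\le f(S)\le N$, and monotonicity is immediate since $S\subseteq T$ forces $f(S)\le f(T)$ and $f(S\cup\{e\})\le f(T\cup\{e\})$, hence $g_e(S)\le g_e(T)$. Submodularity of $g_e$ is the one place a small argument is needed. Given $X,Y\subseteq E\setminus\{e\}$, I would split into cases according to which of the two terms $f(\cdot)$ or $f(\cdot\cup\{e\})-1$ attains the minimum defining $g_e(X)$ and $g_e(Y)$. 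If both are attained by the first term, apply submodularity of $f$ to the pair $(X,Y)$; if both by the second term, apply it to $(X\cup\{e\},\,Y\cup\{e\})$, using $(X\cup\{e\})\cap(Y\cup\{e\})=(X\cap Y)\cup\{e\}$; and in the mixed case, say $g_e(X)=f(X)$ and $g_e(Y)=f(Y\cup\{e\})-1$, apply submodularity of $f$ to $(X,\,Y\cup\{e\})$ and use $e\notin X$ to identify $X\cap(Y\cup\{e\})=X\cap Y$. In every case the resulting bound dominates $g_e(X\cup Y)+g_e(X\cap Y)$ because $g_e\le f$ and $g_e\le f(\cdot\cup\{e\})-1$ pointwise.

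For the second point, I would use the standard description of independence in a contraction: assuming $e$ is not a loop of $\mathcal{M}(f)$ (the only case relevant to our applications), a set $I\subseteq E\setminus\{e\}$ is independent in $\mathcal{M}(f)\contract e$ if and only if $I\cup\{e\}$ is independent in $\mathcal{M}(f)$. By Definition~\ref{defn:matroidFromSubmodular}, $I\cup\{e\}$ is independent in $\mathcal{M}(f)$ precisely when $|J|\le f(J)$ for every $J\subseteq I\cup\{e\}$. Partitioning the subsets $J$ of $I\cup\{e\}$ according to whether they contain $e$, i.e.\ writing $J=J'$ or $J=J'\cup\{e\}$ with $J'\subseteq I$, this condition becomes the pair of requirements $|J'|\le f(J')$ and $|J'|+1\le f(J'\cup\{e\})$ for all $J'\subseteq I$. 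Together these are exactly $|J'|\le\min\{f(J'),\,f(J'\cup\{e\})-1\}=g_e(J')$ for all $J'\subseteq I$, which is the defining condition for $I$ to be independent in $\mathcal{M}(g_e)$. Hence the two matroids coincide.

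The conceptual content is just this last observation: contracting $e$ converts the constraint ``$J'\cup\{e\}$ is independent'' into the $-1$ shift built into $g_e$, while the outer minimum with $f(J')$ is what preserves the constraints coming from subsets that avoid $e$. Accordingly, the only step where any real work happens is the submodularity verification for $g_e$, and within that the mixed case, which is precisely where it matters that we are contracting a single element $e\notin X,Y$; the remainder is an unwinding of Definition~\ref{defn:matroidFromSubmodular} together with the rank/independence formula for a contracted matroid.
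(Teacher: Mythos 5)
Your proof is correct, and its core — $I$ is independent in $\mathcal{M}(f)\contract e$ iff $I\cup\{e\}$ is independent in $\mathcal{M}(f)$, which after splitting the subsets of $I\cup\{e\}$ according to whether they contain $e$ unwinds to $|I'|\le g_e(I')$ for all $I'\subseteq I$ — is exactly the paper's argument. The additional material you supply (the case-by-case submodularity check for $g_e$, and the caveat that $e$ must not be a loop of $\mathcal{M}(f)$, without which the stated identity can actually fail) is correct and goes beyond what the paper records, but it does not change the route.
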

\begin{proof}
    A subset $I \subseteq E$ is independent in $\mathcal{M}(f)\contract e$ if and only if
    $I \cup \{e\}$ is independent in $\mathcal{M}(f)$.
    Thus for any subset $I' \subseteq I$,
    we must have $|I'| \le f(I')$ and $|I' \cup \{e\}| \le f(I' \cup \{e\})$.
    Both conditions are satisfied precisely when $|I'| \le g_e(I')$.
\end{proof}


\begin{thm}\label{thm:mainAffine}
    Let $U,V \subseteq \mathbb{C}^E$ be finite-dimensional affine spaces
    and let $M = \mathcal{M}(U)$ and $N = \mathcal{M}(V)$.
	Then $\mathcal{M}(U\star V) = \mathcal{M}(f)$ where $f:2^{E}\rightarrow \mathbb{Z}$ is defined by
    \[
        f(S) = \begin{cases}
            r_M(S) + r_N(S) -1 & \textnormal{if } S \textnormal{ is } \mathcal{C}(U)\textnormal{-balanced and } \mathcal{C}(V)\textnormal{-balanced}\\
            r_M(S) + r_N(S) & \textnormal{otherwise}.
        \end{cases}
    \]
\end{thm}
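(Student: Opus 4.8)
The plan is to deduce Theorem~\ref{thm:mainAffine} from the linear case, Theorem~\ref{thm:mainResult}, by passing to the linearizations $L(U)$ and $L(V)$, contracting the auxiliary coordinate $*$, and then reading off the rank function. As in the proof of Theorem~\ref{thm:mainResult}, one first reduces to the case that $E$ is finite: both $\mathcal{M}(U\star V)$ and $\mathcal{M}(f)$ have finite rank, independence of a set is equivalent to independence of all of its finite subsets, and every operation used below commutes with restriction to a finite subset of $E$. We may also assume that neither $L(U)$ nor $L(V)$ is contained in a coordinate hyperplane of $\mathbb{C}^{E\cup\{*\}}$. Indeed, the coordinate $*$ is never identically zero on $L(U)$ or on $L(V)$, and for $e\in E$ one checks that $L(U)\subseteq\{x_e=0\}$ exactly when $A(U)_e=0$ and $b(U)_e=0$, i.e.\ when $U\subseteq\{x_e=0\}$; in that case $e$ is a loop of $\mathcal{M}(U\star V)$, and a short case analysis using the rank formula for $\mathcal{M}(L(\cdot))$ displayed after Lemma~\ref{lemma:liftOfMatroidOfAffineSpace} shows $f(\{e\})\le 0$, so $e$ is also a loop of $\mathcal{M}(f)$, and $e$ may be deleted from the ground set on both sides (the analogous argument handles the $V$ side). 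After these reductions $\mathcal{M}(L(U))$ and $\mathcal{M}(L(V))$ are loopless, so Theorem~\ref{thm:mainResult} applies to the finite-dimensional linear spaces $L(U)$ and $L(V)$ and gives $\mathcal{M}(L(U)\star L(V)) = \mathcal{M}(g)$, where $g := r_{\mathcal{M}(L(U))}+r_{\mathcal{M}(L(V))}-1$; combining this with Lemma~\ref{lemma:affineSpaceProduct} yields $\mathcal{M}(U\star V) = \mathcal{M}(g)\contract\{*\}$.

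Next I would compute this contraction. The function $g$ is a sum of two matroid rank functions minus a constant, hence monotone, submodular, and bounded, so Lemma~\ref{lemma:contractSubmodular} applies with $e=*$ and gives $\mathcal{M}(U\star V) = \mathcal{M}(g')$, where for $S\subseteq E$
\[
    g'(S) = \min\bigl\{\, g(S),\ g(S\cup\{*\})-1 \,\bigr\}.
\]
It then suffices to verify that $g'=f$. For this I would use the rank formula for $\mathcal{M}(L(\cdot))$ displayed after Lemma~\ref{lemma:liftOfMatroidOfAffineSpace}, together with the fact (immediate from Lemma~\ref{lemma:liftOfMatroidOfAffineSpace} and Proposition~\ref{prop:liftsFromLinearClasses}) that $b(U)_S\in{\rm span}(A(U)_S)$ holds if and only if $S$ is $\mathcal{C}(U)$-balanced, and similarly for $V$. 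Since $*\in S\cup\{*\}$, the formula gives $r_{\mathcal{M}(L(U))}(S\cup\{*\}) = 1+r_M(S)$ and $r_{\mathcal{M}(L(V))}(S\cup\{*\}) = 1+r_N(S)$, so $g(S\cup\{*\})-1 = r_M(S)+r_N(S)$ for every $S\subseteq E$. On the other hand $g(S)$ equals $r_M(S)+r_N(S)-1$ when $S$ is both $\mathcal{C}(U)$- and $\mathcal{C}(V)$-balanced, equals $r_M(S)+r_N(S)$ when $S$ is balanced for exactly one of the two linear classes, and equals $r_M(S)+r_N(S)+1$ when $S$ is balanced for neither. Taking the minimum of $g(S)$ with $r_M(S)+r_N(S)$ in each of these three cases reproduces exactly the two-case definition of $f$, completing the proof.

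The conceptual content of the argument is carried entirely by the chain Lemma~\ref{lemma:affineSpaceProduct} $\to$ Theorem~\ref{thm:mainResult} $\to$ Lemma~\ref{lemma:contractSubmodular}; once that is in place the identity $g'=f$ is bookkeeping. I expect the only genuinely fiddly step to be the preliminary reduction to the case where $L(U)$ and $L(V)$ avoid coordinate hyperplanes — that is, correctly isolating the coordinates on which one of the two affine spaces vanishes identically and checking that such coordinates are loops of $\mathcal{M}(f)$ as well — since this is where the looplessness hypothesis of Theorem~\ref{thm:mainResult} must be honored.
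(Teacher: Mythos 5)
Your proposal is correct and follows essentially the same route as the paper's proof: Lemma~\ref{lemma:affineSpaceProduct} to pass to $L(U)\star L(V)$, Theorem~\ref{thm:mainResult} for the linearizations, Lemma~\ref{lemma:contractSubmodular} to contract $*$, and then the same bookkeeping with the rank formula for $\mathcal{M}(L(\cdot))$ and Lemma~\ref{lemma:liftOfMatroidOfAffineSpace} to identify the result with $f$. The only difference is that you spell out the reduction to finite $E$ and the coordinate-hyperplane/looplessness hypothesis for $L(U)$ and $L(V)$, which the paper leaves implicit.
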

\begin{proof}
    Let $t_U$ and $t_V$ denote the rank functions of $\mathcal{M}(L(U))$ and $\mathcal{M}(L(V))$.
    Lemmas~\ref{lemma:affineSpaceProduct} and~\ref{lemma:contractSubmodular} imply that
    $\mathcal{M}(U\star V) = \mathcal{M}(g)$
    where $g: 2^E\rightarrow \mathbb{Z}$ is defined by
    \[
        g(S) = \min\{t_U(S) + t_V(S) - 1,t_U(S\cup \{*\}) + t_V(S \cup \{*\}) - 2\}.
    \]
	Since $\mathcal{M}(L(Z))\contract \{*\} = \mathcal{M}(Z)$ for $Z = U,V$, this simplifies to
	\[
		g(S) = \min\{t_U(S) + t_V(S) - 1,r_M(S) + r_N(S)\}.
	\]
    Note that $t_U(S) + t_V(S) \ge r_M(S) + r_N(S)$ with equality precisely when
    $b(Z)_S \in {\rm span}(A(Z)_S)$ for $Z = U$ and $Z = V$.
    Lemma~\ref{lemma:liftOfMatroidOfAffineSpace} implies that 
    this is equivalent to $S$ being $\mathcal{C}(U)$- and $\mathcal{C}(V)$-balanced.
    Therefore $f = g$.
\end{proof}



\subsection{\texorpdfstring{Matroids for $\mathbb{R}^2\rtimes SO(2)$-gain graphs}{Matroids for R2 x| SO(2)-gain graphs-gain graphs}}
This section contains the specific details that are needed to state and prove our characterization
of generic symmetry-forced rigidity for two dimensional frameworks whose symmetry groups consist of rotations and translations.
Definition~\ref{defn:matricesForSemidirectProduct} defines two matrices that will be used to prove
Theorem~\ref{thm:mainTheoremTranslationsAndRotations}.
Lemma~\ref{lemma:mainLemma} gives the precise relationship between their matroids;
it is the technical meat of our main theorem.
The glue that holds this lemma together is Remark~\ref{remark:luckyCoincidence},
an extremely lucky coincidence that lets us model the behavior of our symmetry group using complex arithmetic.

Recall that the Euclidean group $\mathcal{E}(d)$ can be expressed as the semidirect product
$\mathcal{E}(d) = \mathbb{R}^d \rtimes O(d)$.
More explicitly,
the composition rule is $(x_1,A_1)(x_2,A_2) = (x_1 + A_1x_2,A_1A_2)$
and the inverse of $(x,A)$ is $(-A^{-1}x,A^{-1})$.
The projection maps onto $\mathbb{R}^d$ and $O(d)$ will be respectively denoted $\pi_1$ and $\pi_2$.
Note that only $\pi_2$ is a group homomorphism.
If $\mathcal{S}$ is a subgroup of $O(d)$, then $\mathbb{R}^d \rtimes \mathcal{S}$ is a subgroup of $\mathcal{E}(d)$.

Let $(G,\phi)$ be a gain graph whose gain group is a subgroup of $\mathbb{R}^2\rtimes SO(2)$.
Then $(G,\pi_2\circ \phi)$ is an $SO(2)$-gain graph.
The goal of the rest of this subsection is to define the matroid $\mathcal{M}^L(G,\phi)$
which will be an elementary lift of $\mathcal{M}(G,\pi_2\circ\phi)$.
In light of Proposition~\ref{prop:liftsFromLinearClasses}, it will be enough to indicate which circuits
of $\mathcal{M}(G,\pi_2\circ\phi)$ are also circuits of $\mathcal{M}^L(G,\phi)$.

A graph is \emph{bicyclic} if it is a subdivision of one of the three graphs shown in Figure~\ref{figure:bicyclicGraphs}.
Subdivisions of the first are called \emph{tight handcuffs}, of the second \emph{loose handcuffs},
and of the third \emph{theta graphs}.
The circuits $\mathcal{M}(G,\phi)$ for any gain graph consist of the balanced cycles
and the bicyclic subgraphs with no induced balanced cycle~\cite[Theorem 6.10.5]{oxley2006matroid}.

\begin{figure}
    \begin{tikzpicture}
        \vertex (a) at (0,0){};
        \draw (a) to [out=45,in=-45,looseness=20] (a);
        \draw (a) to [out=135,in=-135,looseness=20] (a);
    \end{tikzpicture} \qquad
    \begin{tikzpicture}
        \vertex (a) at (0,0){};
        \vertex (b) at (1,0){};
        \path (a) edge (b);
        \draw (b) to [out=45,in=-45,looseness=20] (b);
        \draw (a) to [out=135,in=-135,looseness=20] (a);
    \end{tikzpicture} \qquad
    \begin{tikzpicture}
        \vertex (a) at (0,0){};
        \vertex (b) at (1,0){};
        \path (a) edge (b);
        \draw (a) to [out=45,in=135] (b);
        \draw (a) to [out=-45,in=-135] (b);
        \draw[color=white] (b) to [out=45,in=-45,looseness=20] (b);
    \end{tikzpicture}
    \vspace{-15pt}
    \caption{Any graph that is a subdivision of one of these graphs is \emph{bicyclic}.
    Subdivisions of the first are called \emph{tight handcuffs}, of the second \emph{loose handcuffs}
    and of the the third \emph{theta graphs}.}\label{figure:bicyclicGraphs}
\end{figure}
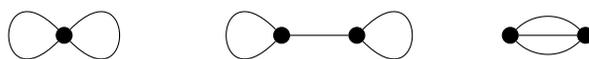

\begin{defn}\label{defn:dutch}
    Let $(G,\phi)$ be an $\mathcal{S}$-gain graph and let $B$ be a bicyclic subgraph.
    A \emph{covering pair of walks} is a pair $(W_1,W_2)$ of closed walks in $B$,
    based at the same vertex,
    such that every edge of $B$ is visited between one and two times by the concatenation $W_1W_2$.
    We say that $B$ is \emph{Dutch} if there exists a covering pair $(W_1,W_2)$ of walks such that $\phi(W_1)\phi(W_2) = \phi(W_2)\phi(W_1)$.
\end{defn}

\begin{lemma}\label{lemma:dutch}
  A bicyclic subgraph $B$ of $(G,\phi)$ is Dutch if and only if for all covering pairs $(W_1,W_2)$ of walks,
  $\phi(W_1)\phi(W_2) = \phi(W_2)\phi(W_1)$.
\end{lemma}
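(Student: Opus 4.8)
The plan is to reduce the statement to a fact about free groups of rank two. The ``if'' direction is trivial, since $B$ possesses at least one covering pair (for a theta graph, take any two of its three cycles, joined at a common vertex; for a handcuff, take its two cycles, conjugating one along the connecting path so that both are based at the same vertex).

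For the ``only if'' direction, I would first observe that subdividing an edge changes nothing: gains multiply along paths, so suppressing the degree-two vertices of $B$ and composing gains accordingly yields a bicyclic graph $B'$ whose covering pairs correspond to those of $B$ with the same gains. Hence we may assume $B$ is one of the three shapes of Figure~\ref{figure:bicyclicGraphs}, loops allowed. Next, fix a vertex $x$ of $B$; then $\pi_1(B,x)$ is free of rank two, and sending a closed walk to its gain defines a homomorphism $\rho_x\colon \pi_1(B,x)\to\mathcal{S}$. The key claim is that a pair $(W_1,W_2)$ of closed walks based at $x$ is a covering pair exactly when $\{[W_1],[W_2]\}$ is a free basis of $\pi_1(B,x)$, and I would verify this by listing covering pairs for each of the three shapes, up to reversing either walk and up to the choice of basepoint among the vertices of $B$: for a theta graph with defining paths $P_1,P_2,P_3$ the covering pairs are, up to these symmetries, the three pairs of distinct cycles $(C_{ij},C_{ik})$ — so that one $P_\ell$ is traversed twice and the other two once — and one notes that $[C_{12}],[C_{23}]$ generate $\pi_1$ while $[C_{13}]=[C_{12}][C_{23}]$; for the tight handcuff the covering pairs are the reversals of the pair formed by its two loops; for the loose handcuff they are the reversals of $(C_1,QC_2\overline{Q})$ with $Q$ the connecting path.

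Granting this, the lemma follows from the following group-theoretic fact: if $F$ is free of rank two, $\{\alpha_1,\alpha_2\}$ and $\{\beta_1,\beta_2\}$ are free bases, and $\rho\colon F\to\mathcal{S}$ is a homomorphism, then $\rho(\alpha_1)$ commutes with $\rho(\alpha_2)$ if and only if $\rho(\beta_1)$ commutes with $\rho(\beta_2)$. Any two free bases of $F$ are connected by Nielsen transformations — transposing the two generators, inverting one generator, or replacing a generator by its product with the other — and each of these preserves the property ``the images of the two generators commute'': transposition and inversion are clear, and for $(\alpha_1,\alpha_2)\mapsto(\alpha_1\alpha_2,\alpha_2)$ one has, writing $g=\rho(\alpha_1)$ and $h=\rho(\alpha_2)$, that $gh$ commutes with $h$ iff $ghh=hgh$ iff $gh=hg$. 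Finally, changing the basepoint conjugates $\rho_x$ by a fixed element of $\mathcal{S}$, and a pair of elements commutes iff any simultaneous conjugate of it does; so the property is the same for every covering pair, regardless of basepoint.

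The hard part will be the structural claim that covering pairs are exactly the free bases of $\pi_1(B,x)$ (up to choice of basepoint). One must be careful that the covering condition genuinely excludes ``redundant'' pairs — for instance a walk obtained from a cycle by inserting a detour that is then doubly covered, which conjugates a cycle gain without enlarging the generated subgroup — and the enumeration for each of the three shapes should be carried out with attention both to orientations and to basepoints that may lie at suppressed degree-two vertices.
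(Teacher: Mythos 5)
Your overall route is genuinely different from the paper's. The paper compares two covering pairs directly: it first treats pairs with a common basepoint, then reduces (by suppressing degree-two vertices, as you also do) to the case where both basepoints have degree at least three, leaving only the two-vertex graphs of Figure~\ref{figure:bicyclicGraphs} to check by hand. You instead pass through $\pi_1(B,x)\cong F_2$ and Nielsen equivalence. That layer of your argument is sound: any two free bases of $F_2$ are Nielsen equivalent, each elementary move preserves ``the images commute'' (your computation $ghh=hgh\Leftrightarrow gh=hg$ handles the only nontrivial move), and a change of basepoint conjugates $\rho_x$, which preserves commutativity. If your structural claim held, this would be a cleaner and more uniform argument than the paper's casework, and it would generalize beyond rank two.

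The gap is in the structural claim itself. First, the ``exactly when'' is false in the direction you do not need: $\{a,\,aba^{-1}\}$ is a free basis of $\pi_1$ of a tight handcuff with loops $a,b$, but the corresponding pair of closed walks traverses the $a$-loop three times, so it is not a covering pair. That is harmless, since your argument only uses ``covering pair $\Rightarrow$ free basis.'' The trouble is that this direction also fails under Definition~\ref{defn:dutch} read literally: in a tight handcuff the pair $(W,W)$ with $W=ab$ visits each loop exactly twice, hence is a covering pair, yet $[W]$ and $[W]$ generate only a cyclic subgroup---and their gains commute trivially, so admitting such pairs would make every tight handcuff Dutch and would falsify the lemma itself. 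The same problem arises from walks with cancelling backtracks, e.g.\ $(a\overline{a},\,b)$, or from a pair in which one walk is trivial in a theta graph. So the enumeration you defer---that every covering pair, suitably normalized, is a free basis---is not a routine verification: it is precisely where one must exclude the ``redundant'' pairs you mention, and it carries essentially all of the content of the ``only if'' direction. Until that enumeration is carried out under an explicit nondegeneracy hypothesis (in the paper's application, the degenerate pairs are exactly those producing the identically zero linear form, so they are never used there), the proof is not complete.
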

\begin{proof}
    The ``if'' direction is trivial.
    To prove the ``only if'' direction, let $(W_1,W_2)$ and $(W_1',W_2')$ be covering pairs of walks
    and assume that $\phi(W_1)\phi(W_2) = \phi(W_2)\phi(W_1)$.
    Let $v$ be the basepoint of $W_i$ and $v'$ of $W_i'$.
    If $v = v'$, then $\{\phi(W_i),\phi(W_i)^{-1} : i = 1,2\} = \{\phi(W_i'),\phi(W_i')^{-1} : i = 1,2\}$.
    If two elements of a group commute with each other,
    then they commute with each others' inverses and so we may assume $v \neq v'$.
    This means that $B$ is not a tight handcuff.
    We may then without loss of generality assume that $v$ and $v'$ have degree at least three
    (following the convention that each loop edge contributes two to the degree of its incident vertex),
    since if either has degree two, then we may contract one of the incident edges
    and compose its gain with the gain of the other incident edge
    without changing $\phi(W_i)$ or $\phi(W_i')$.
    This means that $B$ is either the second or third graph pictured in Figure~\ref{figure:bicyclicGraphs}
    with vertex set $\{v,v'\}$.
    In either case, it is easy to check that $\phi(W_1)$ and $\phi(W_2)$ commute
    if and only if $\phi(W_1')$ and $\phi(W_2')$ commute.
\end{proof}

\begin{thm}[Main theorem]\label{thm:mainTheoremTranslationsAndRotations}
    Let $\mathcal{S}$ be a subgroup of $\mathbb{R}^2\rtimes SO(2)$
    and let $(G,\phi)$ be an $\mathcal{S}$-gain graph with $n\ge 2$ vertices.
    Define $\alpha:2^{A(G)}\rightarrow \{0,1,2,3\}$ as follows
    \[
        \alpha(H) = \begin{cases}
            3 & \textnormal{if every cycle in } H \textnormal{ is balanced}\\
            2 & \textnormal{if not, and the gain of each cycle is a translation}\\
            1 & \textnormal{if none of the above, and all bicyclic subgraphs are Dutch}\\
            0 & \textnormal{otherwise}.
        \end{cases}
    \]
    Then $(G,\phi)$ is generically minimally infinitesimally rigid in $\mathbb{R}^2$ if and only if
    $|A(G)| = 2|V(G)|-\alpha(A(K_{n}(\mathcal{S})))$ and $|F| \le 2|V(F)| - \alpha(F)$
    for all $F \subseteq A(G)$.
\end{thm}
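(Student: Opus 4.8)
The plan is to present $\cm_n^{\mathcal{S}}$ as a Hadamard product of two affine spaces, read off its algebraic matroid from Theorem~\ref{thm:mainAffine}, and recognize the resulting count function as $2|V(\cdot)|-\alpha(\cdot)$. Since $(G,\phi)$ is generically minimally infinitesimally rigid exactly when it is a \emph{basis} of $\mathcal{M}(\cm_n^{\mathcal{S}})$ --- that is, independent with $|A(G)|$ equal to the rank --- it suffices to describe the independent sets and to compute $\rank\mathcal{M}(\cm_n^{\mathcal{S}})$. To set up the product, write points of $(\mathbb{C}^2)^n$ in complex coordinates $z_j := x_j + \sqrt{-1}\,y_j$; the lucky coincidence of Remark~\ref{remark:luckyCoincidence} is that an element of $\mathcal{S}\le\mathbb{R}^2\rtimes SO(2)$ with rotation angle $\theta$ and translation part $\tau$ acts on $\mathbb{R}^2=\mathbb{C}$ by $z\mapsto e^{\sqrt{-1}\theta}z+\tau$, so that $D^{\mathcal{S}}_n(x)_e = \bigl|z_{\source(e)} - e^{\sqrt{-1}\theta_e}z_{\target(e)} - \tau_e\bigr|^2$. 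Substituting an independent family $w_j$ for $\overline{z_j}$ turns this into $\bigl(z_{\source(e)} - e^{\sqrt{-1}\theta_e}z_{\target(e)} - \tau_e\bigr)\bigl(w_{\source(e)} - e^{-\sqrt{-1}\theta_e}w_{\target(e)} - \overline{\tau_e}\bigr)$, whence $\cm_n^{\mathcal{S}} = U\star V$, where $U, V\subseteq\mathbb{C}^{A(K_n(\mathcal{S}))}$ are the affine spaces cut out by the matrices of Definition~\ref{defn:matricesForSemidirectProduct}, parameterized respectively by $d_e = z_{\source(e)} - e^{\sqrt{-1}\theta_e}z_{\target(e)} - \tau_e$ and $d_e = w_{\source(e)} - e^{-\sqrt{-1}\theta_e}w_{\target(e)} - \overline{\tau_e}$.

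The heart of the argument is Lemma~\ref{lemma:mainLemma}, which identifies $\mathcal{M}(U)$, $\mathcal{M}(V)$ and the linear classes $\mathcal{C}(U)$, $\mathcal{C}(V)$. First, $\mathcal{M}(U)$ and $\mathcal{M}(V)$ are both the frame matroid $\mathcal{M}(K_n(\mathcal{S}),\pi_2\circ\psi_n)$ of the underlying $SO(2)$-gain graph: this is the standard $\mathbb{C}$-representation of a frame matroid, together with the parallel-class observation used in the proof of Theorem~\ref{thm:cyclicSymmetryMatroid} (\cite[Lemma~6.10.11]{oxley2006matroid}) to absorb loops sharing a common rotation. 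Second, by Lemma~\ref{lemma:liftOfMatroidOfAffineSpace}, $\mathcal{C}(U)$ consists of those circuits $C$ for which $0\in\pi_C(U)$, i.e.\ for which the affine system $\{\,z_{\source(e)} - e^{\sqrt{-1}\theta_e}z_{\target(e)} = \tau_e : e\in C\,\}$ is solvable. Examining this on the two species of frame-matroid circuits (\cite[Theorem~6.10.5]{oxley2006matroid}), one finds that a rotation-balanced cycle is solvable iff its $\mathcal{S}$-gain is the identity (i.e.\ it is balanced), while a bicyclic circuit $B$ is solvable iff $B$ is Dutch: for a covering pair of closed walks with rotation parts $e^{\sqrt{-1}\alpha}$, $e^{\sqrt{-1}\beta}$ and translation parts $\tau_1,\tau_2$, commutativity of the two gains in $\mathbb{R}^2\rtimes SO(2)$ is exactly the equation $(1-e^{\sqrt{-1}\beta})\tau_1 = (1-e^{\sqrt{-1}\alpha})\tau_2$, which is the consistency condition of the linear system around $B$; Lemma~\ref{lemma:dutch} guarantees this is independent of the covering pair, and since all these conditions are invariant under complex conjugation, $\mathcal{C}(U) = \mathcal{C}(V)$.

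Now Theorem~\ref{thm:mainAffine} yields $\mathcal{M}(\cm_n^{\mathcal{S}}) = \mathcal{M}(f)$ with $f(S) = 2\,r_{\pi_2\circ\phi}(S) - \varepsilon(S)$, where the frame-matroid rank is $r_{\pi_2\circ\phi}(S) = \sum_{F\in C(S)}(|V(F)|-\alpha_{\mathrm{rot}}(F))$ with $\alpha_{\mathrm{rot}}(F)\in\{0,1\}$ (\cite[Theorem~2.1(j)]{zaslavsky1991biased}), and $\varepsilon(S)=1$ iff every circuit of $\mathcal{M}(G,\pi_2\circ\phi)$ contained in $S$ lies in $\mathcal{C}(U)$. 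A short computation shows $f$ and cardinality are additive over connected components in the direction needed to reduce the test $|S|\le f(S)$ to connected $S$, and for connected $F$ one has $f(F) = 2|V(F)| - \bigl(2\alpha_{\mathrm{rot}}(F)+\varepsilon(F)\bigr)$; unwinding cases shows $2\alpha_{\mathrm{rot}}(F)+\varepsilon(F) = \alpha(F)$: if $\alpha_{\mathrm{rot}}(F)=1$ (every cycle rotation-balanced) then $\varepsilon(F)=1$ iff every cycle is balanced ($\alpha=3$) and $\varepsilon(F)=0$ iff every cycle's gain is a translation but not all trivial ($\alpha=2$); if $\alpha_{\mathrm{rot}}(F)=0$ (some cycle has nontrivial rotation) then $\varepsilon(F)=1$ iff every bicyclic subgraph is Dutch ($\alpha=1$) and $\varepsilon(F)=0$ otherwise ($\alpha=0$) --- in the $\alpha=1$ case one uses additionally that, for connected not-rotation-balanced $F$, Dutchness of all bicyclic subgraphs already forces every rotation-balanced cycle of $F$ to be balanced, because a nonzero pure translation never commutes with a nontrivial rotation and connectedness of $F$ would otherwise produce a non-Dutch bicyclic subgraph. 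Finally, $\rank\mathcal{M}(\cm_n^{\mathcal{S}}) = f(A(K_n(\mathcal{S}))) = 2n-\alpha(A(K_n(\mathcal{S})))$; the cleanest route is a direct computation of $\dim\cm_n^{\mathcal{S}}$, whose generic fibre (here $n\ge 2$ is used) is an orbit of the group of direct isometries centralizing the $\mathcal{S}$-action, of dimension $\alpha(A(K_n(\mathcal{S})))$ by the same case analysis. Combining the independence description with this rank gives the claimed characterization.

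The hardest part is Lemma~\ref{lemma:mainLemma}, specifically the identification of the linear class $\mathcal{C}(U)$ on bicyclic circuits with the Dutch condition. Writing down the ``affine potential'' system around a theta graph, a tight handcuff, and a loose handcuff (where the two covering walks overlap only along a connecting path), and recognizing its consistency condition as the commutator relation in $\mathbb{R}^2\rtimes SO(2)$ --- uniformly under subdivision and independently of the chosen covering pair, via Lemma~\ref{lemma:dutch} --- is the technical crux. A secondary nuisance is the bookkeeping around the ``$-1$'' in Theorem~\ref{thm:mainAffine}: since $f$ equals $r_M+r_N-1$ only on balanced sets, both the reduction to connected subsets and the rank-at-the-top equality must track the balance behaviour of $\varepsilon$ with care, including the harmless degenerate value of $f$ on the empty set.
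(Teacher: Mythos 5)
Your proposal is correct and follows essentially the same route as the paper: express $\cm_n^{\mathcal{S}}$ as a Hadamard product of two conjugate affine spaces via complex coordinates, identify their matroids with the frame matroid of the $SO(2)$-projection and their linear classes with the balanced/Dutch circuits (Lemma~\ref{lemma:mainLemma}), apply Theorem~\ref{thm:mainAffine}, and then match the resulting count function with $2|V(\cdot)|-\alpha(\cdot)$ by reducing to connected edge sets. Your explicit observation that, for connected $F$ with an unbalanced rotation, a rotation-balanced cycle with nontrivial translation gain forces a non-Dutch bicyclic subgraph is exactly the point the paper's case analysis relies on (and states somewhat tersely), so no gap remains.
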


Inferring $\alpha(A(K_n(\mathcal{S})))$ from $\mathcal{S}$ is easy:~if $n \ge 2$, then $\alpha(A(K_n(\mathcal{S}))) = 0$ when $\mathcal{S}$ is non-Abelian,
$\alpha(A(K_n(\mathcal{S}))) = 1$ when $\mathcal{S}$ is a rotation group,
$\alpha(A(K_n(\mathcal{S}))) = 2$ when $\mathcal{S}$ is a translation group,
and $\alpha(A(K_n(\mathcal{S}))) = 3$ when $\mathcal{S}$ is trivial.
As a sanity check for Theorem~\ref{thm:mainTheoremTranslationsAndRotations},
note that Theorems~\ref{thm:cyclicSymmetryMatroid},~\ref{thm:lamansMatroid}, and \cite[Theorem 5.2]{ross2015inductive} are immediate consequences.

\begin{ex}\label{ex:wallpaperGroup}
    We will now consider a symmetric framework whose symmetry group is the wallpaper
    group with one degree-four rotation and no reflections nor glide-reflections.
    Let $\{e_1,e_2\}$ denote the standard basis of $\mathbb{R}^2$
    and let $A$ denote the matrix of a counterclockwise rotation ninety degrees about the origin.
    Let $\mathcal{S}$ denote the subgroup of $\mathcal{E}(2)$ generated by $(e_1,I)$, $(e_2,I)$ and $(0,A)$.
    Figure~\ref{figure:wallpaperSymmetry} shows an $\mathcal{S}$-symmetric framework,
    in its classical representation and its gain graph representation.
    Theorem~\ref{thm:mainTheoremTranslationsAndRotations} tells us that the graph underlying this framework is \emph{not}
    generically rigid.
    To see this, note that every cycle in the sub gain graph on vertices $v_1$ and $v_2$ is unbalanced
    with a translation gain. If the entire framework were rigid, this subgraph should have at most two
    edges, but this is not the case.
    If we replace the gain labels on the loops at $v_1$ and $v_2$ by $(e_1,A)$ and $(e_2,A)$,
    then the resulting framework is rigid
    since then every induced bicyclic subgraph would be non-Dutch.
    This example cannot be handled by any of the previously quoted theorems in this paper, but it can be handled by the main results of~\cite{malestein2014frameworks}.
    \begin{figure}
        \includegraphics[scale=0.46]{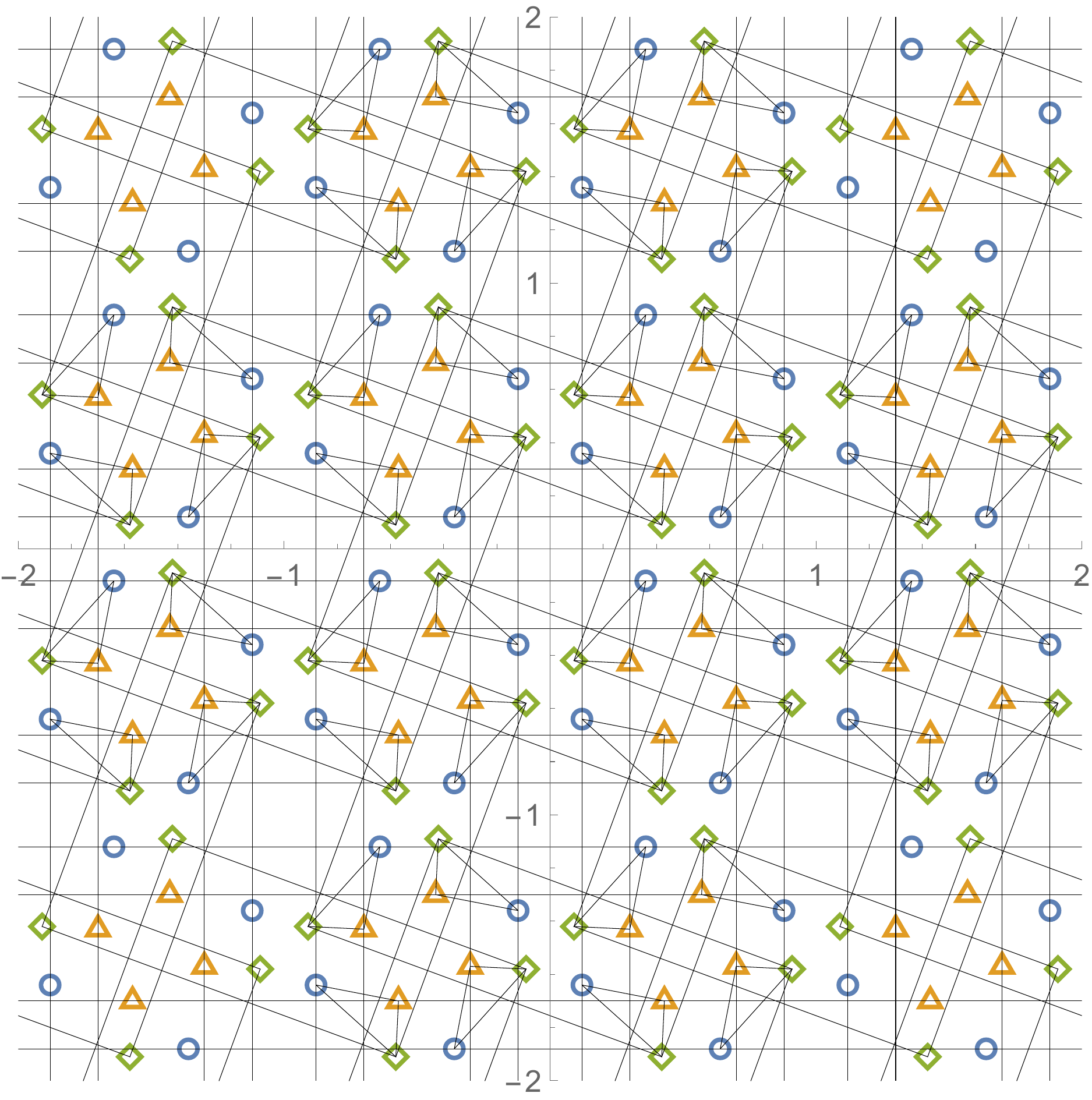}
        \begin{tikzpicture}
            \node (a) at (0,0){\includegraphics[scale=0.01]{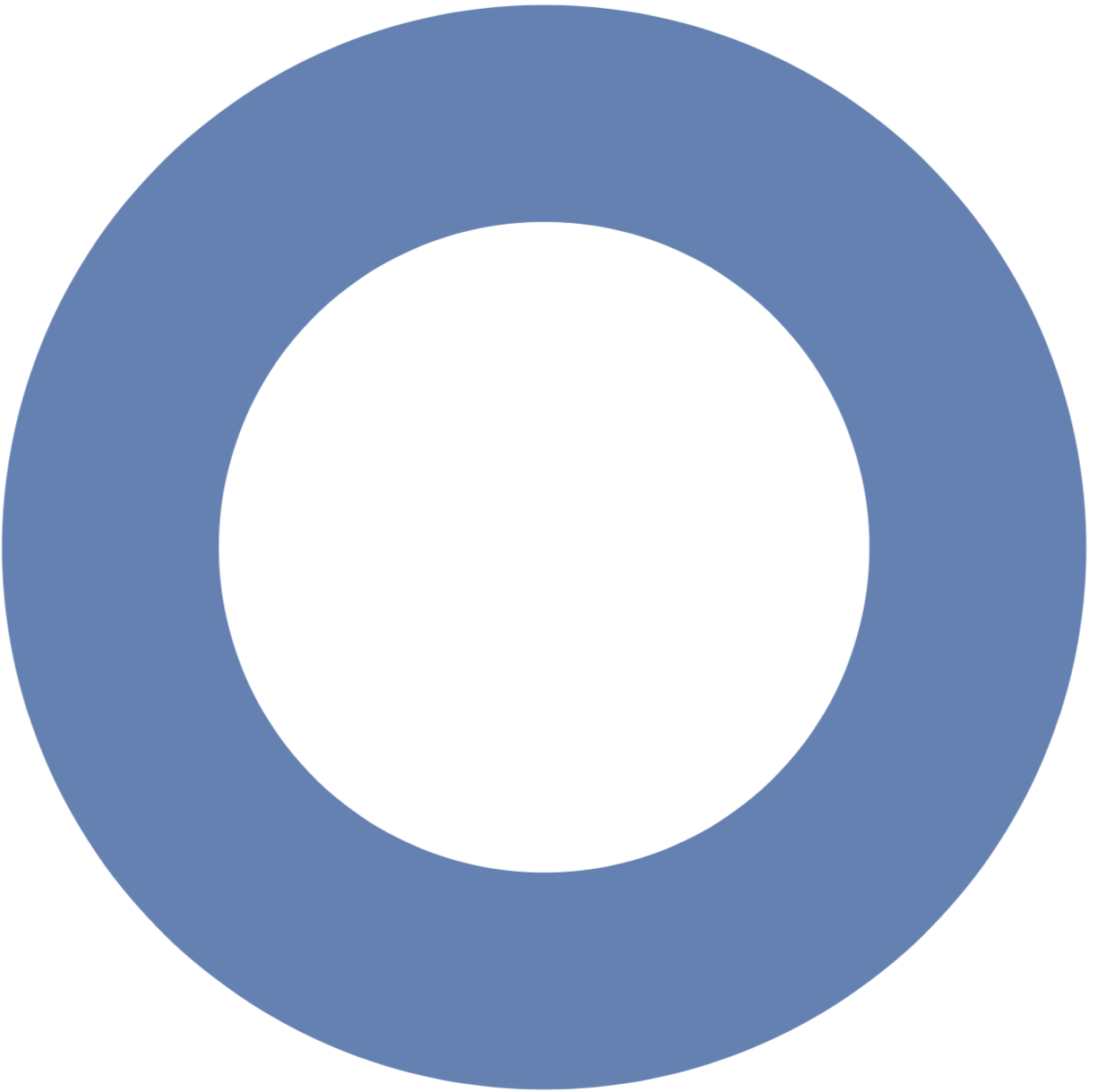}};
            \node (b) at (4,0){\includegraphics[scale=0.01]{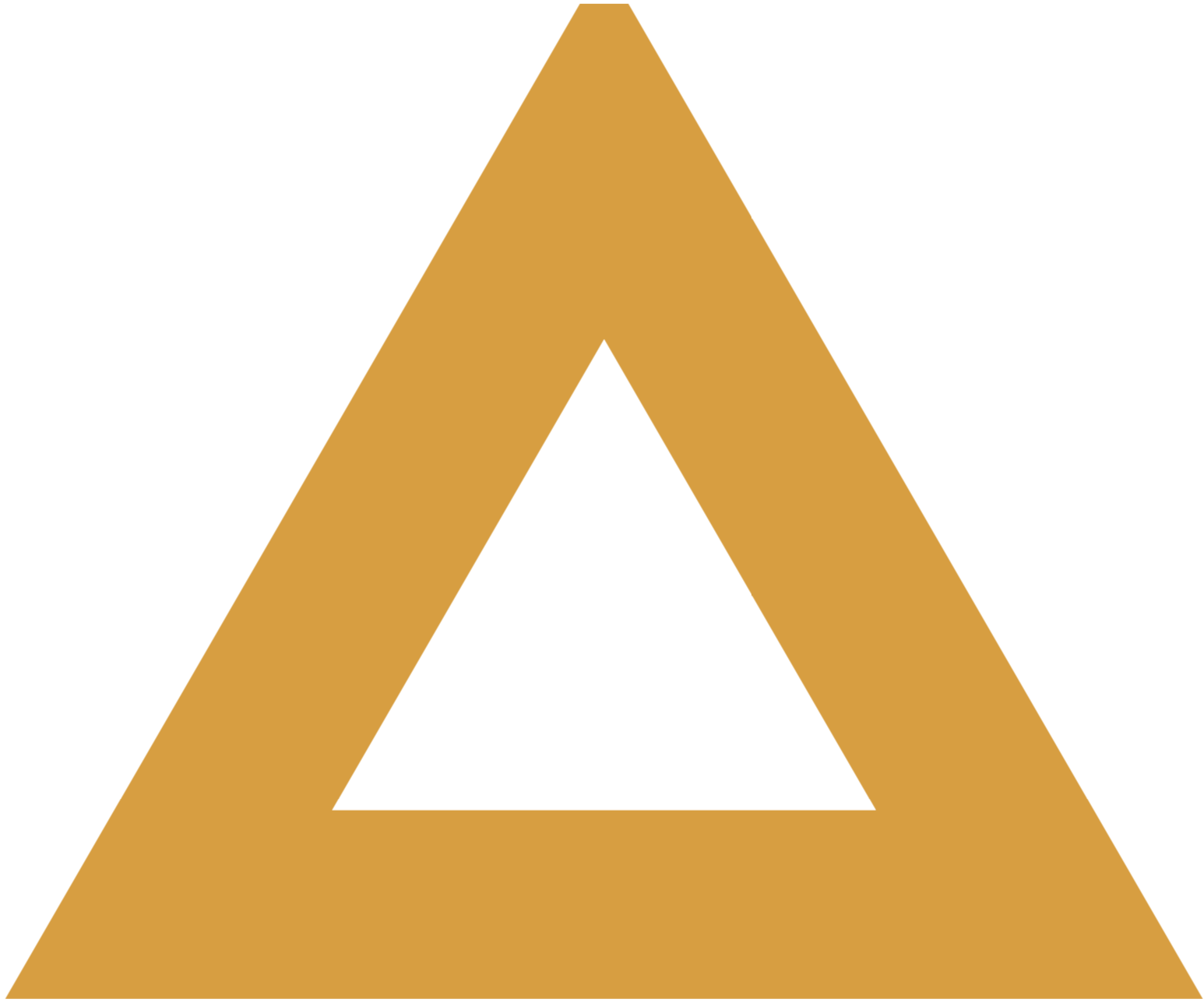}};
            \node (c) at (2,2){\includegraphics[scale=0.01]{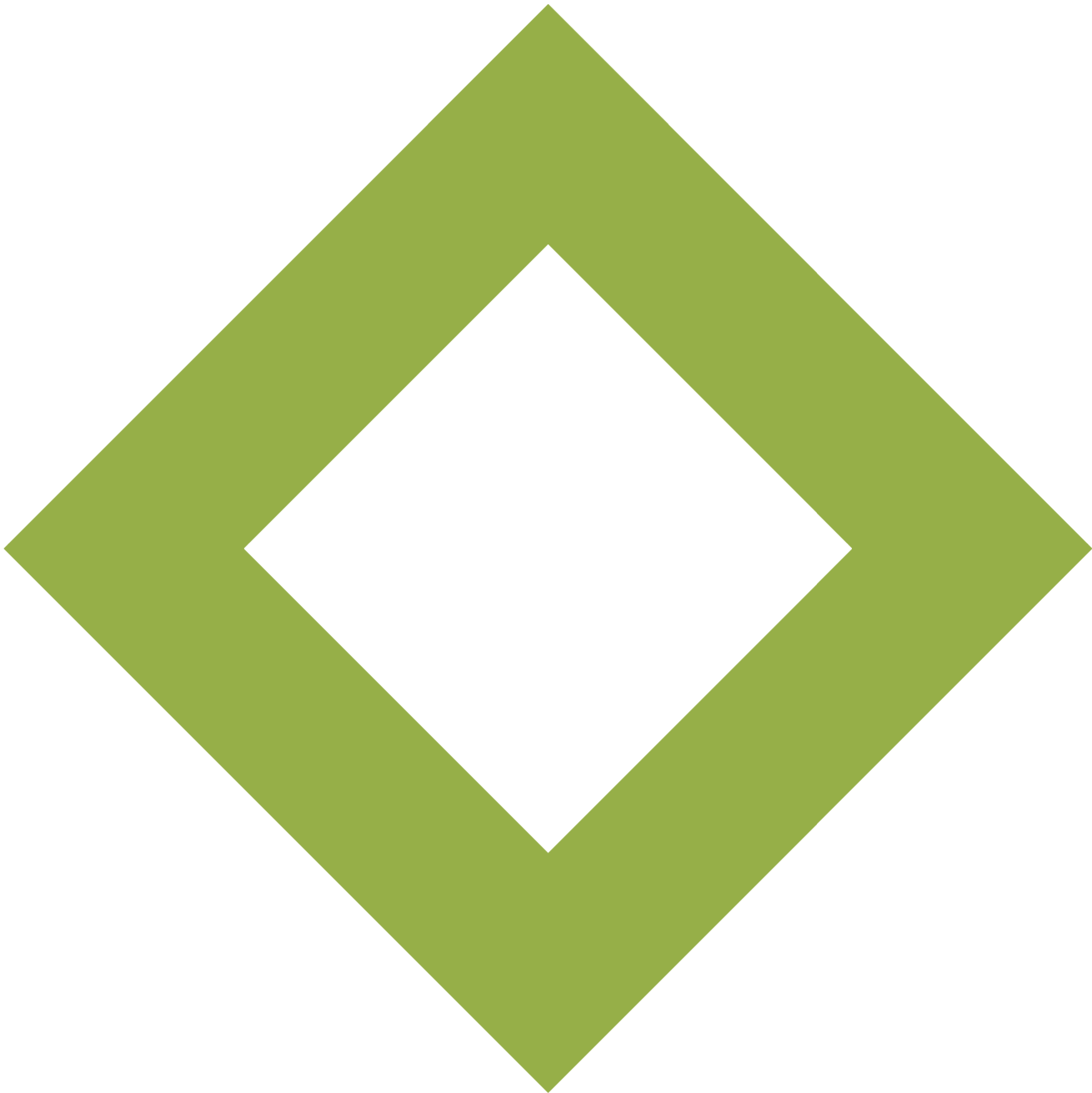}};
            \node at (.6,.2){$v_1$};
            \node at ({4-.6},.2){$v_2$};
            \node at (2,{2-.4}){$v_3$};
            \node at (0,-1){$\left(e_1,I\right)$};
            \node at (4,-1){$\left(e_2,I\right)$};
            \node at (2,3){$\left(0,A\right)$};
            \node at (0,-3.5){};
            \draw[->] (a) to [out=330,in=210,looseness=5] (a);
            \draw[->] (b) to [out=330,in=210,looseness=5] (b);
            \draw[->] (c) to [out=30,in=150,looseness=5] (c);
            \path[->]
                (a) edge (b) edge (c)
                (b) edge (c)
            ;
        \end{tikzpicture}
        \\
        \includegraphics[scale=0.46]{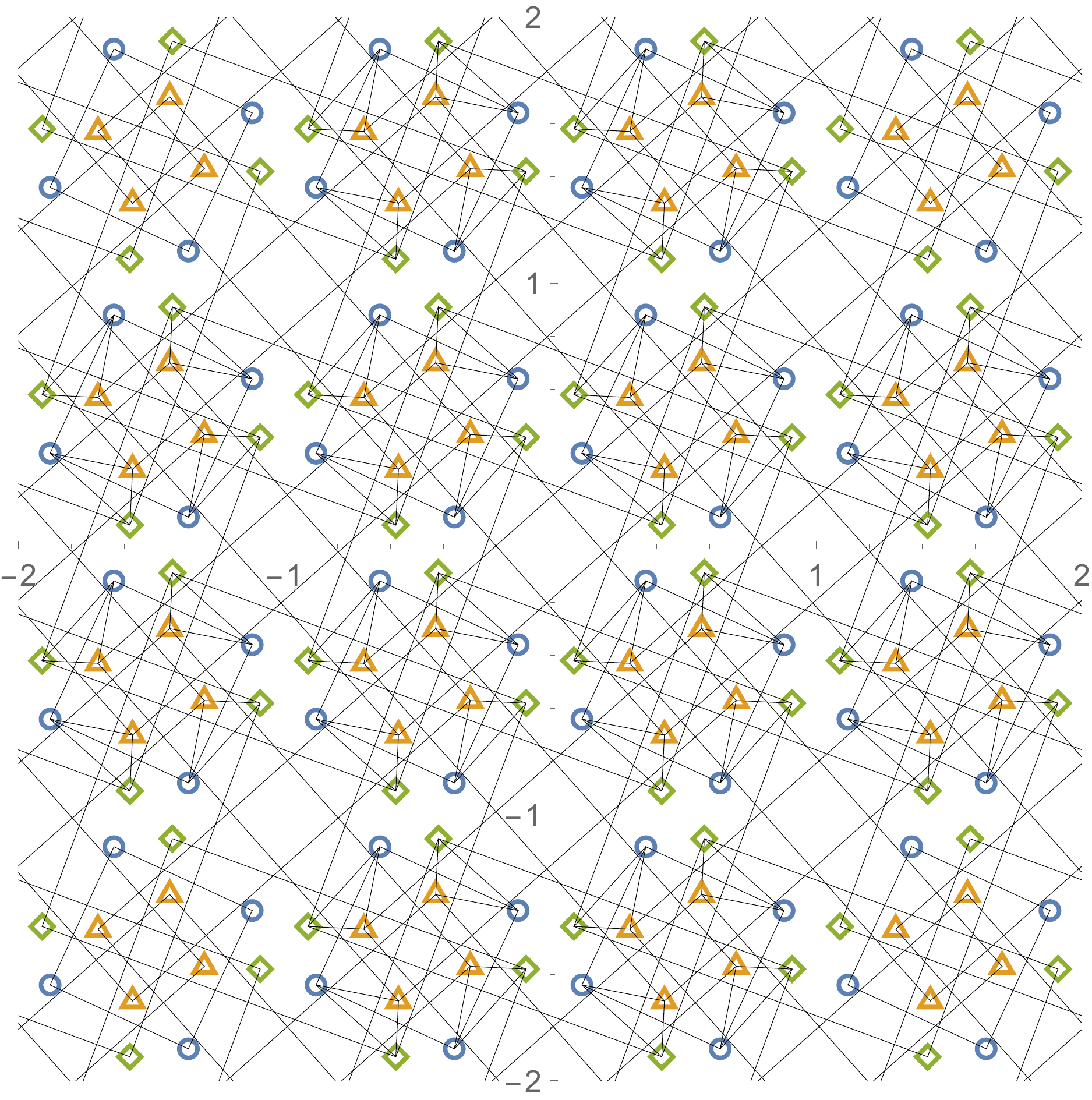}
        \begin{tikzpicture}
            \node (a) at (0,0){\includegraphics[scale=0.01]{blueCircle.png}};
            \node (b) at (4,0){\includegraphics[scale=0.01]{orangeTriangle.png}};
            \node (c) at (2,2){\includegraphics[scale=0.01]{greenDiamond.png}};
            \node at (.6,.2){$v_1$};
            \node at ({4-.6},.2){$v_2$};
            \node at (2,{2-.4}){$v_3$};
            \node at (0,-1){$\left(e_1,A\right)$};
            \node at (4,-1){$\left(e_2,A\right)$};
            \node at (2,3){$\left(0,A\right)$};
            \node at (0,-3.5){};
            \draw[->] (a) to [out=330,in=210,looseness=5] (a);
            \draw[->] (b) to [out=330,in=210,looseness=5] (b);
            \draw[->] (c) to [out=30,in=150,looseness=5] (c);
            \path[->]
                (a) edge (b) edge (c)
                (b) edge (c)
            ;
        \end{tikzpicture}
        \caption{Two frameworks with wallpaper symmetry.
        Here, $A$ denotes the matrix of a ninety-degree counterclockwise rotation
        and $e_1,e_2$ denote the standard basis vectors of $\mathbb{R}^2$,
        and $p(v_1)$, $p(v_2)$, and $p(v_3)$ are respectively
        $(0.64,0.12)$, $(0.70,0.43)$, and $(0.91,0.42)$.
        The framework below is symmetry-forced rigid,
        whereas the framework above is not.
        One motion of the framework above that maintains the symmetry
        comes from rotating the points $Tp(v_1)$ and $Tp(v_2)$ counterclockwise about $Tp(v_3)$,
        where $T$ ranges over the symmetry group.
        An animation of this can be found at \url{https://dibernstein.github.io/Supplementary_materials/symRigid.html}.
        }\label{figure:wallpaperSymmetry}
    \end{figure}
\end{ex}

The remainder of this section proves Theorem~\ref{thm:mainTheoremTranslationsAndRotations}.
The main idea, which is fleshed out in Proposition~\ref{prop:mainTheoremTerseVersion},
is that when $\mathcal{S}\subset \mathbb{R}^2\rtimes SO(2)$,
we can apply Theorem~\ref{thm:mainAffine} since in this case
$\cm_n^\mathcal{S}$ is a Hadamard product of two affine spaces.
Definition~\ref{defn:matricesForSemidirectProduct} gives us matrices to describe these affine spaces,
and Lemma~\ref{lemma:mainLemma} describes their relevant combinatorial properties.

Let $\mathbb{T}$ denote the circle group, i.e.~the set of unit-modulus complex numbers under multiplication.
Define $t: SO(2)\rightarrow \mathbb{T}$ to be the group isomorphism
given by $t(A)=\exp(i\theta)$ where $\theta$ is the angle of rotation of the matrix $A$.
Define $c:\mathbb{R}^2 \rightarrow \mathbb{C}$ to be the group isomorphism given by $c(x,y) = x + iy$.

\begin{rmk}\label{remark:luckyCoincidence}
    Given $A \in SO(2)$ and $x \in \mathbb{R}^2$, $c(Ax) = t(A)c(x)$.
\end{rmk}

\begin{defn}\label{defn:matricesForSemidirectProduct}
    Let $(G,\phi)$ be an $\mathbb{R}^2\rtimes SO(2)$-gain graph.
    Define $M(G,\phi) \in \mathbb{C}^{A(G)\times V(G)}$ by
    \[
        M(G,\phi)_{e,v} :=
        \begin{cases}
            1 & \textnormal{when } e \textnormal{ is not a loop and } v=\source(e) \\
            -t(\pi_2(\phi(e))) & \textnormal{when } e \textnormal{ is not a loop and } v=\target(e) \\
            1-t(\pi_2(\phi(e))) & \textnormal{when } e \textnormal{ is a loop} \\
            0 & \textnormal{otherwise}.
        \end{cases}
    \]
    Define $M^L(G,\phi) \in \mathbb{C}^{A(G)\times (V(G)\cup \{*\})}$ by
    \[
        M^L(G,\phi)_{e,v} =
        \begin{cases}
            -c(\pi_1(\phi(e))) & \textnormal{when } v = * \\
            M(G,\phi)_{e,v} &\textnormal{when } v\neq *.
        \end{cases}
    \]
\end{defn}

\begin{rmk}\label{remark:switchingForNewConstruction}
    In Definition~\ref{defn:matricesForSemidirectProduct},
    the effect of reversing an arc $e$ of $G$ and inverting $\phi(e)$
    is to multiply the corresponding row of $M(G,\phi)$ and $M^L(G,\phi)$
    by $-t(\pi_2(\phi(e))^{-1})$.
    The statement for $M^L(G,\phi)$
    follows via Remark~\ref{remark:luckyCoincidence} from the formula for inverting an element of $\mathcal{E}(d)$.
\end{rmk}

\begin{ex}
    Figure~\ref{figure:liftMatrixExample} shows an $\mathbb{R}\rtimes SO(2)$-gain graph $(G,\phi)$ alongside $M^L(G,\phi)$.
\end{ex}

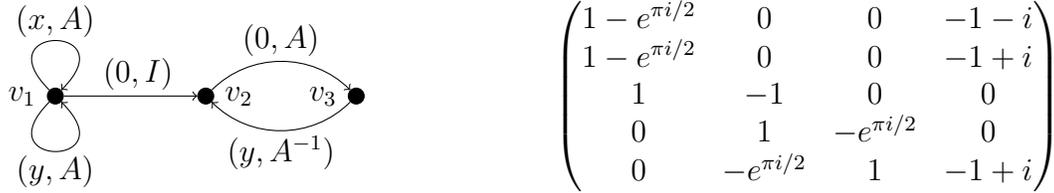
\begin{figure}[h]
    \begin{tikzpicture}
        \vertex (a) at (0,0)[label=left:$v_1$]{};
        \vertex (b) at (2,0)[label=right:$v_2$]{};
        \vertex (c) at (4,0)[label=left:$v_3$]{};
        \draw[->] (a) to [in=45,out=135,looseness=20] (a);
        \draw[->] (a) to [in=-45,out=-135,looseness=20] (a);
        \draw[->] (a) to (b);
        \draw[->] (b) to [out=45,in=135] (c);
        \draw[->] (c) to [out=-135,in=-45] (b);
        \node at (0,1){$(x,A)$};
        \node at (0,-1){$(y,A)$};
        \node at (3,.75){$(0,A)$};
        \node at (3,-.75){$(y,A^{-1})$};
        \node at (1.1,.3){$\left(0,I\right)$};
        \node at (10,0){$
        \begin{pmatrix}
        1-e^{\pi i/2} & 0 & 0 & -1-i\\
        1-e^{\pi i/2} & 0 & 0 & -1+i\\
        1 & -1 & 0 & 0\\
        0 & 1 & -e^{\pi i/2} & 0\\
        0 & -e^{\pi i/2} & 1 & -1+i\\
        \end{pmatrix}
    $};
    \end{tikzpicture}
    \caption{An $\mathbb{R}^2\rtimes SO(2)$-gain graph $(G,\phi)$ alongside $M^L(G,\phi)$.
    Here $A$ is the matrix of a $\pi/2$-radian counterclockwise rotation about the origin,
    $x = (1,1)^T$ and $y = (1,-1)^T$.}\label{figure:liftMatrixExample}
\end{figure}

Given a matrix $A \in \mathbb{K}^{E\times S}$,
the matroid given by linear independence on the \emph{rows} of $A$ will be denoted $\mathcal{M}(A)$.
This is the same matroid as $\mathcal{M}({\rm span}(A))$,
the algebraic matroid of the column span of $A$.
We say that a linear form $f:\mathbb{C}^E\rightarrow \mathbb{C}$ is \emph{supported on $S\subseteq E$}
if the nonzero coefficients of $f$
are at the coordinates indexed by $S$.
Given a matrix $B \in \mathbb{C}^{E\times F \cup \{*\}}$,
define $\aff(B) := \{Bx: x\in\mathbb{C}^{F \cup \{*\}} \textnormal{ with } x_{*} = 1\}$.
If $A$ is the column-submatrix of $B$ consisting of the columns indexed by $F$, then
$\mathcal{M}(\aff(B)) = \mathcal{M}(A)$.

\begin{lemma}\label{lemma:mainLemma}
    Let $(G,\phi)$ be an $\mathbb{R}^2\rtimes SO(2)$-gain graph
    and let $\mathcal{C}$ consist of all balanced circuits and all Dutch bicyclic subgraphs of $(G,\phi)$.
    Then
    \begin{enumerate}[(a)]
    	\item\label{item:isGainMatroid} $\mathcal{M}(\aff(M^L(G,\phi)))=\mathcal{M}(M(G,\phi)) = \mathcal{M}(G,\pi_2\circ\phi)$,
    	\item\label{item:isLinearClass} $\mathcal{C}$ is a linear class, and
    	\item\label{item:isTheLiftWeWant} $\mathcal{C}(\aff(M^L(G,\phi)))=\mathcal{C}$; in other words $\mathcal{M}(G,\pi_2\circ\phi)^{\mathcal{C}} = \mathcal{M}(M^L(G,\phi))$.
    \end{enumerate}
\end{lemma}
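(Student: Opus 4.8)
The plan is to prove the three parts in order, since each relies on the previous one. For part~\eqref{item:isGainMatroid}, I first note that $\mathcal{M}(\aff(M^L(G,\phi)))=\mathcal{M}(M(G,\phi))$ holds by the general remark preceding the lemma: deleting the column indexed by $*$ from $M^L(G,\phi)$ leaves exactly $M(G,\phi)$, and the matroid of an affine space $\aff(B)$ equals the matroid of the column-submatrix of $B$ with the $*$-column removed. So the content is the identification $\mathcal{M}(M(G,\phi)) = \mathcal{M}(G,\pi_2\circ\phi)$. For this, observe that $M(G,\phi)$ depends only on $\pi_2\circ\phi$, and the matrix $M(G,\phi)$ is—up to the row-scalings described in Remark~\ref{remark:switchingForNewConstruction}, which do not affect the matroid—a standard representation of the frame matroid of an $SO(2)\cong\mathbb{T}$-gain graph. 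Concretely, for an $SO(2)$-gain graph $(G,\psi)$ the matrix with $+1$ in the source row, $-t(\psi(e))$ in the target row, and $1-t(\psi(e))$ on a loop is exactly the ``group-labeled incidence matrix'' whose matroid is the frame matroid $\mathcal{M}(G,\psi)$ whenever the gain group embeds in the multiplicative group of the field (here $\mathbb{T}\subset \mathbb{C}^\times$); this is classical and can be cited via \cite[Chapter 6.10]{oxley2006matroid} or proved directly by checking that a connected subgraph's rows are dependent exactly when the subgraph is balanced or, if it contains a cycle, that cycle's gain forces a linear dependence. The rank of each connected component is $|V(F)|-\alpha(F)$ with $\alpha$ the balance indicator, matching the frame-matroid rank formula.

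For part~\eqref{item:isLinearClass} I must check that $\mathcal{C}$—the balanced circuits together with the Dutch bicyclic subgraphs—satisfies the linear-class axiom: if $C_1\cup C_2$ is a union of two circuits with $r(C_1\cup C_2)=|C_1\cup C_2|-2$ and $C_1,C_2\in\mathcal{C}$, then every circuit $C_3\subseteq C_1\cup C_2$ lies in $\mathcal{C}$. The rank condition forces $C_1\cup C_2$ to be (a subdivision of) a bicyclic graph, and the circuits inside it are its two ``cycle-like'' circuits and, in the theta/handcuff cases, possibly a third. The key point is Lemma~\ref{lemma:dutch}: being Dutch is a property of the bicyclic subgraph, independent of the chosen covering pair of walks, and it is equivalent to a single commutation relation among the relevant gains. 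I would argue that if the two circuits in $C_1\cup C_2$ are each either balanced or Dutch, then the commutator of any covering pair of walks is trivial, so every bicyclic circuit inside $C_1\cup C_2$ is Dutch and every cycle-circuit inside it is balanced (a balanced cycle inside a Dutch bicyclic graph stays balanced). The honest bookkeeping here is a short case analysis over the three bicyclic shapes and over which of $C_1,C_2$ are balanced versus merely Dutch; I expect this to be mildly fiddly but routine.

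Part~\eqref{item:isTheLiftWeWant} is where the lucky coincidence of Remark~\ref{remark:luckyCoincidence} does the real work, and it is the step I expect to be the main obstacle. By Lemma~\ref{lemma:liftOfMatroidOfAffineSpace}, $\mathcal{C}(\aff(M^L(G,\phi)))$ consists of the circuits $C$ of $\mathcal{M}(M(G,\phi))$ such that the $*$-column restricted to $C$ lies in the span of the other columns restricted to $C$—equivalently, the circuit $C$, viewed as a minimal linear dependence among rows of $M(G,\phi)$, still yields a dependence when the extra coordinate $-c(\pi_1(\phi(e)))$ is appended to each row. I would analyze this separately for balanced cycles and for bicyclic circuits. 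For a balanced cycle the row dependence is the signed cycle relation; appending the translation coordinates, the dependence persists precisely because the composite gain around the cycle is the identity in $\mathbb{R}^2\rtimes SO(2)$ (not merely in $SO(2)$), and Remark~\ref{remark:luckyCoincidence} is exactly what lets the rotational coefficients $t(\pi_2(\phi(e)))$ multiply the translation parts $c(\pi_1(\phi(e)))$ compatibly—so every balanced cycle is in $\mathcal{C}(\aff(M^L(G,\phi)))$. For a bicyclic circuit $B$, the unique (up to scalar) row dependence is supported on all of $B$ and encodes a covering pair of walks $(W_1,W_2)$; appending the translation coordinate, the dependence survives exactly when $c(\pi_1(\phi(W_1)))$ and $c(\pi_1(\phi(W_2)))$ satisfy the compatibility forced by $t(\pi_2(\phi(W_1)))$ and $t(\pi_2(\phi(W_2)))$, and a direct computation using Remark~\ref{remark:luckyCoincidence} and the semidirect-product multiplication rule shows this holds if and only if $\phi(W_1)$ and $\phi(W_2)$ commute, i.e.\ if and only if $B$ is Dutch. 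Assembling these two computations gives $\mathcal{C}(\aff(M^L(G,\phi)))=\mathcal{C}$, and the final ``in other words'' clause then follows from the definition of $\mathcal{M}(G,\pi_2\circ\phi)^{\mathcal{C}}$ and the identification $\mathcal{M}(\aff(M^L(G,\phi)))=\mathcal{M}(G,\pi_2\circ\phi)$ from part~\eqref{item:isGainMatroid}, together with Proposition~\ref{prop:liftsFromLinearClasses}. The delicate part throughout is getting the bicyclic row-dependence written down explicitly enough that the commutator of the covering walks appears on the nose; once Remark~\ref{remark:luckyCoincidence} is invoked this is linear algebra over $\mathbb{C}$, but keeping track of the subdivisions and the two-versus-one edge visits in the handcuff and theta cases is the part that needs care.
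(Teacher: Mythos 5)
Your plan for parts (a) and (c) is essentially the paper's proof: (a) via the standard frame-matroid representation (the paper cites \cite[Lemma~6.10.11]{oxley2006matroid} after noting the loops at a vertex form a parallel class), and (c) by reducing, via Lemma~\ref{lemma:liftOfMatroidOfAffineSpace}, to checking whether the unique circuit dependence survives appending the $*$-column, split into the balanced-cycle case and the bicyclic case, with Remark~\ref{remark:luckyCoincidence} and Lemma~\ref{lemma:dutch} turning the bicyclic condition into commutation of the covering walks. The one genuine divergence is part (b): you propose a direct case analysis verifying the linear-class axiom, which you rightly flag as fiddly, and your sketch rests on an unverified claim --- e.g.\ that every cycle-circuit inside $C_1\cup C_2$ is balanced \emph{in $(G,\phi)$} (not just in $(G,\pi_2\circ\phi)$); a cycle balanced under $\pi_2\circ\phi$ can still carry a nonzero translation gain, so this needs an actual argument. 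The paper avoids all of this by proving (c) first and observing that (b) is then automatic, since $\mathcal{C}(\aff(M^L(G,\phi)))$ is a linear class by construction (Lemma~\ref{lemma:liftOfMatroidOfAffineSpace} together with Proposition~\ref{prop:liftsFromLinearClasses}). Your own part (c) does not actually use (b), so you could adopt the same shortcut and delete the case analysis entirely; as written, the direct verification of (b) is the only place your plan has a real gap, and it is a removable one.
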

\begin{proof}
	The first equality in \ref{item:isGainMatroid} is clear and the second follows from \cite[Lemma~6.10.11]{oxley2006matroid}
    since the loops at a given vertex in $G$ form a parallel class in $\mathcal{M}(G,\pi_2\circ\phi)$.
	Item \ref{item:isLinearClass} follows from \ref{item:isTheLiftWeWant}.
    Proposition~\ref{prop:liftsFromLinearClasses} tells us that \ref{item:isTheLiftWeWant} will follow if we show
    that for every $S\subseteq A(G)$,
    \begin{equation}\label{eq:rankOfLiftForSemidirectProductLemma}
        \rank(M^L(G,\phi)_S) =
        \begin{cases}
            \rank(M(G,\phi)_S) & \textnormal{if } S \textnormal{ is } \mathcal{C}\textnormal{-balanced} \\
            1+\rank(M(G,\phi)_S) & \textnormal{otherwise}.
        \end{cases}
    \end{equation}
    It is enough to show that \eqref{eq:rankOfLiftForSemidirectProductLemma} holds for
    every circuit of $\mathcal{M}(M(G,\phi))$.    
    Let $S$ be a circuit of $\mathcal{M}(M(G,\phi))$.
    Up to scaling, there exists a unique linear form $f_S:\mathbb{C}^{A(G)}\rightarrow \mathbb{C}$,
    supported on $S$ such that $f_S(z) = 0$ whenever $z = M(G,\phi)x$ for some $x \in \mathbb{C}^{V(G)}$.
    Let $z^*$ be the column of $M^L(G,\phi)$ indexed by $*$.
    Our task now is to show that $S \in \mathcal{C}$ if and only if $f_S(z^*) = 0$.
    By the second equality in \ref{item:isGainMatroid},
    \cite[Theorem 6.10.5]{oxley2006matroid} implies that $S$ is either
    a balanced cycle of $(G,\pi_2\circ\phi)$,
    or a bicyclic subgraph of $(G,\pi_2\circ\phi)$ with no balanced cycles.

    We now introduce a family of linear forms
    that we will use to write $f_S$ explicitly.
    Let $W = e_1,\dots,e_k$ be a walk in $G$.
    Define the linear form $g_W(z) := \sum_{i=1}^k a_i z_{e_i}$ by
    \[
        a_i := \begin{cases}
            t(\pi_2(\phi(e_1)\phi(e_2)\cdots \phi(e_{i-1}))) & \textnormal{if } e_i \textnormal{ traversed according to its orientation} \\
            -t(\pi_2(\phi(e_i)^{-1}\phi(e_1)\phi(e_2)\cdots \phi(e_{i-1}))) & \textnormal{otherwise}
        \end{cases}        
    \]
    Then $g_W(z)$ is supported on a subset of $W$ and
    when $z = M(G,\pi_2\circ\phi)x$, Remark~\ref{remark:switchingForNewConstruction} implies
    \begin{equation}\label{eq:pathLinearForm}
        g_W(z) = x_{\source(e_1)}-t(\pi_2(\phi(e_1)\cdots\phi(e_k)))x_{\target(e_k)}.
    \end{equation}
    In particular, if $W = S$ is a balanced cycle of $(G,\pi_2\circ\phi)$, then \eqref{eq:pathLinearForm}
    is supported on $S$ and evaluates to zero.
    Therefore $f_S = g_S$.
    In this case, it is a straightforward computation that $f_S(z^*) = 0$ if and only if $S$ is also balanced in $(G,\phi)$,
    i.e.~if $S \in \mathcal{C}$.

    Now assume that $S$ is a bicyclic subgraph of $G$ with no induced cycle that is balanced in $(G,\pi_2\circ \phi)$.
    Let $(W_1,W_2)$ be a covering pair of walks of $S$.
    We claim that
    \begin{equation}\label{eq:linearFormForBicylicSubgraph}
        f_S = (1-t(\pi_2(\phi(W_2))))g_{W_1} - (1-t(\pi_2(\phi(W_1))))g_{W_2}.
    \end{equation}
    It follows from \eqref{eq:pathLinearForm} that the above linear form indeed vanishes on all
    points of the form $z = M(G,\phi)x$.
    Moreover it is not identically zero since $1-c(\pi_2(\phi(W_2))),1-c(\pi_2(\phi(W_1)))\neq 0$,
    as no cycles in $S$ are balanced.
    Thus the claim is proven.

    We now need to show that $f_S(z^*) = 0$ if and only if $S$ is Dutch.
    Expanding and rearranging terms in \eqref{eq:linearFormForBicylicSubgraph} gives
    \[
    	f_S(z^*)=g_{W_1}(z^*)+t(\pi_2(\phi(W_1)))g_{W_2}(z^*) - (g_{W_2}(z^*) + t(\pi_2(\phi(W_2)))g_{W_1}(z^*)).
    \]
    If $W$ is a walk in $G$, then Remark~\ref{remark:luckyCoincidence} implies $g_W(z^*) = -c(\pi_1(\phi(W))$.
    From this, and one more application of Remark~\ref{remark:luckyCoincidence},
    for $i\neq j \in \{1,2\}$ we have
    \[g_{W_i}(z^*)+t(\pi_2(\phi(W_i)))g_{W_j}(z^*) = -c(\pi_1(\phi(W_i)\phi(W_j))).\]
    Since $SO(2)$ is Abelian, Lemma~\ref{lemma:dutch} implies that $f_S(z^*) = 0$ if and only if $S$ is Dutch.
\end{proof}


\begin{prop}\label{prop:mainTheoremTerseVersion}
	Let $\mathcal{S}$ be a subgroup of $\mathbb{R}^2\rtimes SO(2)$
	and let $M = \mathcal{M}(K_n(\mathcal{S}),\pi_2\circ\psi_n)$.
	Let $\mathcal{C}$ be the linear class of $M$ consisting of all
	cycles that are balanced in $(K_n(\mathcal{S}),\psi_n)$,
	and all bicyclic subgraphs of $K_n(\mathcal{S})$ that are Dutch in $(K_n(\mathcal{S}),\psi_n)$.
	Then $\mathcal{M}(\cm_n^\mathcal{S}) = \mathcal{M}(f)$ where $f:2^{A(G)}\rightarrow \mathbb{Z}$
	is defined by
	\[
		f(F) = \begin{cases}
		    2r_M(F) - 1 & \textnormal{if } F \textnormal{ is } \mathcal{C}\textnormal{-balanced} \\
		    2r_M(F) & \textnormal{otherwise}.
		\end{cases}
	\]
\end{prop}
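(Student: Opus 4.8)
The plan is to realize $\cm_n^{\mathcal{S}}$ as a Hadamard product of two finite-dimensional affine spaces, each with matroid $M$ and with associated linear class $\mathcal{C}(\cdot)$ equal to $\mathcal{C}$, and then apply Theorem~\ref{thm:mainAffine}. Write the coordinates of the domain $(\mathbb{C}^2)^n$ of $D_n^{\mathcal{S}}$ as $x^{(j)} = \left(\tfrac{a_j + b_j}{2}, \tfrac{a_j - b_j}{2\sqrt{-1}}\right)$, a linear change of coordinates under which the linear form $c(x^{(j)}) = x^{(j)}_1 + \sqrt{-1}\,x^{(j)}_2$ becomes $a_j$ and its conjugate $\bar c(x^{(j)}) := x^{(j)}_1 - \sqrt{-1}\,x^{(j)}_2$ becomes $b_j$, exactly as in Example~\ref{ex:cayleyMengerHadamard} and the proof of Theorem~\ref{thm:cyclicSymmetryMatroid}. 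Put $\phi := \psi_n$. For each arc $e$ of $K_n(\mathcal{S})$, using the semidirect product action $\phi(e)y = \pi_1(\phi(e)) + \pi_2(\phi(e))y$ we have $x^{(\source(e))} - \phi(e)x^{(\target(e))} = x^{(\source(e))} - \pi_2(\phi(e))x^{(\target(e))} - \pi_1(\phi(e))$, and since $\|v\|_2^2 = (v_1 + \sqrt{-1}\,v_2)(v_1 - \sqrt{-1}\,v_2)$, the $e$-coordinate of $D_n^{\mathcal{S}}(x)$ factors as the product of $c$ and $\bar c$ applied to this vector. By Remark~\ref{remark:luckyCoincidence}, $c$ applied to it is $a_{\source(e)} - t(\pi_2(\phi(e)))\,a_{\target(e)} - c(\pi_1(\phi(e)))$ (the first two terms combining to $(1 - t(\pi_2(\phi(e))))\,a_{\source(e)}$ when $e$ is a loop, so that $\source(e) = \target(e)$), which is precisely the $e$-coordinate of the point of $U := \aff(M^L(K_n(\mathcal{S}),\psi_n))$ whose free coordinates are $a$. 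Likewise, the identity $\bar c(Ay) = \overline{t(A)}\,\bar c(y)$ for $A \in SO(2)$ (the reflection conjugate of Remark~\ref{remark:luckyCoincidence}) shows that $\bar c$ applied to the same vector is the $e$-coordinate of the point of $V := \aff(\overline{M^L(K_n(\mathcal{S}),\psi_n)})$ with free coordinates $b$, where $\overline{\,\cdot\,}$ denotes entrywise complex conjugation --- here one uses $\overline{-t(\pi_2(\phi(e)))} = -t(\pi_2(\phi(e)))^{-1}$ since $t$ is unit-modulus, and $\overline{-c(\pi_1(\phi(e)))} = -\overline{c(\pi_1(\phi(e)))}$. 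As $(a,b)$ ranges over $\mathbb{C}^n \times \mathbb{C}^n$ the corresponding pair of points sweeps out all of $U \times V$ (the first point depends only on $a$ and ranges over all of $U$, the second only on $b$ and ranges over all of $V$), so, arguing with Zariski density of the real points exactly as in Example~\ref{ex:cayleyMengerHadamard}, $\cm_n^{\mathcal{S}} = U \star V$. Both $U$ and $V$ are affine spaces of dimension at most $n$.

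Next I would identify the matroids and linear classes of $U$ and $V$. Lemma~\ref{lemma:mainLemma}(a) gives $\mathcal{M}(U) = \mathcal{M}(K_n(\mathcal{S}), \pi_2 \circ \psi_n) = M$, and Lemma~\ref{lemma:mainLemma}(c) gives $\mathcal{C}(U) = \mathcal{C}$, since the class produced there --- all cycles balanced in $(K_n(\mathcal{S}),\psi_n)$ together with all Dutch bicyclic subgraphs of $(K_n(\mathcal{S}),\psi_n)$ --- is exactly the linear class $\mathcal{C}$ in the statement. For $V$, note that its defining matrix is the entrywise complex conjugate of that of $U$; since linear (in)dependence of vectors over $\mathbb{C}$ is preserved by the field automorphism $z \mapsto \overline z$, we get $\mathcal{M}(V) = \mathcal{M}(U) = M$, and since the condition ``$b(V)_C \in {\rm span}(A(V)_C)$'' that describes $\mathcal{C}(V)$ in Lemma~\ref{lemma:liftOfMatroidOfAffineSpace} is likewise conjugation-invariant, $\mathcal{C}(V) = \mathcal{C}(U) = \mathcal{C}$.

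Finally, applying Theorem~\ref{thm:mainAffine} to the affine spaces $U$ and $V$ yields $\mathcal{M}(\cm_n^{\mathcal{S}}) = \mathcal{M}(U \star V) = \mathcal{M}(f')$, where $f'(F) = r_{\mathcal{M}(U)}(F) + r_{\mathcal{M}(V)}(F) - 1 = 2r_M(F) - 1$ when $F$ is both $\mathcal{C}(U)$-balanced and $\mathcal{C}(V)$-balanced --- i.e.\ $\mathcal{C}$-balanced --- and $f'(F) = 2r_M(F)$ otherwise. Thus $f' = f$, which is the assertion. I expect the only genuinely delicate point to be the change-of-variables computation of the first paragraph: one must carry the semidirect-product action through the substitution, treat loops (where $\source(e) = \target(e)$) correctly, and notice that the second factor of each squared length is governed by the \emph{conjugate}-linear coordinate, which is exactly what forces $V$ to be built from $\overline{M^L(K_n(\mathcal{S}),\psi_n)}$ rather than from $M^L(K_n(\mathcal{S}),\psi_n)$ itself. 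Given that, the rest is bookkeeping on top of Lemma~\ref{lemma:mainLemma} and Theorem~\ref{thm:mainAffine}.
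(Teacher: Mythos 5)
Your proposal is correct and follows essentially the same route as the paper's proof: the same change of variables exhibits $\cm_n^{\mathcal{S}}$ as $\aff(M^L(K_n(\mathcal{S}),\psi_n))\star\aff(\overline{M^L(K_n(\mathcal{S}),\psi_n)})$, conjugation-invariance of the matroid and linear class is invoked, and Lemma~\ref{lemma:mainLemma} plus Theorem~\ref{thm:mainAffine} finish the argument. You simply carry out the coordinate bookkeeping (loops, the conjugate-linear factor, Zariski density of the real points) more explicitly than the paper does.
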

\begin{proof}
    We will denote the conjugate of a complex number or matrix $z$ by $\overline{z}$.
    If we apply the following change of variables (letting $i$ denote $\sqrt{-1}$)
    \[
        x_u \mapsto \frac{x_u + y_u}{2} \qquad y_u \mapsto \frac{x_u - y_u}{2i},
    \]
    then the parameterization $D_n^\mathcal{S}$ of $\cm_n^\mathcal{S}$ becomes
    \[
        d_e = (x_{\source(e)} - t\pi_2\psi_n(e) x_{\target(e)} - c\pi_1\psi_n(e))(y_{\source(e)} -
        \overline{t\pi_2\psi_n(e)} y_{\target(e)} - \overline{c\pi_1\psi_n(e)}).
    \]
    From this, it follows that
    $\cm_n^\mathcal{S}= \aff(M^L(G,\phi))\star \aff(\overline{M^L(G,\phi)})$.
    Since complex conjugation is a field automorphism,
    it does not change the matroid.
    The proposition then follows from Lemma~\ref{lemma:mainLemma} and Theorem~\ref{thm:mainAffine}.
\end{proof}

The last thing we need to prove Theorem~\ref{thm:mainTheoremTranslationsAndRotations}
is the following formula for the rank function of the matroid of a gain graph.

\begin{lemma}[{\cite[Theorem 2.1(j)]{zaslavsky1991biased}}]\label{lemma:rankOfGainMatroid}
    Let $\mathcal{S}$ be a group and let $(G,\phi)$ be an $\mathcal{S}$-gain graph.
    The rank function of $\mathcal{M}(G,\phi)$ is
    \[
    	r_{\mathcal{M}(G,\phi)}(S) = \sum_{F \in C(S)} V(F) - \beta(F)
    \]
    where $\beta(F) = 0$ if $F$ contains an unbalanced cycle, and $\beta(F) = 1$ otherwise.
\end{lemma}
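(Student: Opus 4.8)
The plan is to compute the rank function of the frame matroid $\mathcal{M}(G,\phi)$ directly from the definition recalled in Section~\ref{section:symmetry}: $S \subseteq E(G)$ is independent exactly when every connected component of the subgraph $(V(S),S)$ is a forest, or has exactly one cycle and that cycle is unbalanced. Write $r$ for the rank function, so $r(S)$ is the maximum cardinality of an independent subset of $S$. First I would reduce to the connected case. The components $F \in C(S)$ partition $S$ and have pairwise disjoint vertex sets, so for any $I \subseteq S$ the connected components of $(V(I),I)$ are exactly the connected components of the graphs $(V(I\cap F),I\cap F)$, $F \in C(S)$. Hence $I$ is independent in $\mathcal{M}(G,\phi)$ if and only if $I\cap F$ is independent for every $F$, which gives $r(S) = \sum_{F\in C(S)} r(F)$. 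It therefore suffices to show that a connected edge set $F$ satisfies $r(F) = |V(F)|$ when $F$ contains an unbalanced cycle and $r(F) = |V(F)|-1$ otherwise, since this is precisely $|V(F)| - \beta(F)$.

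Next I would treat the two connected cases. Suppose $F$ is balanced. If $I \subseteq F$ is independent, any cycle in $(V(I),I)$ would be a cycle of $F$, hence balanced, contradicting the independence criterion; so $(V(I),I)$ is a forest and $|I| \le |V(I)| - 1 \le |V(F)| - 1$. Conversely a spanning tree of the connected graph $F$ is a forest with $|V(F)| - 1$ edges, hence independent, so $r(F) = |V(F)| - 1$. Now suppose $F$ contains an unbalanced cycle $C$. Since every component of an independent set is a tree or a tree-plus-one-edge, any independent $I \subseteq F$ has $|I| \le |V(I)| \le |V(F)|$. For the matching lower bound, observe that as an edge set the cycle $C$ has exactly as many edges as vertices, is connected, and contains a single cycle, itself, which is unbalanced, so $C$ is independent. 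Using connectedness of $F$, I would repeatedly adjoin to $C$ an edge of $F$ with exactly one endpoint among the vertices already reached; each such step adds one new vertex and one new edge and creates no new cycle, and the process terminates only when all of $V(F)$ has been reached. This produces an independent subgraph of $F$ with $|V(F)|$ edges, so $r(F) = |V(F)|$.

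Combining the two cases with the component decomposition yields $r(S) = \sum_{F\in C(S)} (|V(F)| - \beta(F))$, as claimed. The only point requiring genuine care — and the step I expect to be the main obstacle — is the bookkeeping around the word ``cycle'' in the gain-graph setting: a loop with nonidentity gain must be counted as an unbalanced cycle on one vertex and one edge, and a pair of parallel arcs between two vertices as a cycle on two vertices and two edges, so that the identity $|E(C)| = |V(C)|$ for a cycle $C$, the forest/unicyclic bound $|I| \le |V(I)|$, and the characterization of independence all remain valid. Once these conventions are pinned down, the rest is a routine greedy/spanning-tree argument.
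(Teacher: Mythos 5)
Your proof is correct. Note that the paper itself gives no argument for this lemma: it simply cites Zaslavsky's theory of biased graphs (Theorem 2.1(j) of \cite{zaslavsky1991biased}), where the formula is a special case of the rank function of the frame (bias) matroid of a biased graph. What you have written is a self-contained elementary derivation from the independence criterion quoted in Section~\ref{section:symmetry}: the component-wise reduction $r(S)=\sum_{F\in C(S)}r(F)$ is valid because independence is tested component by component and the components of any $I\subseteq S$ refine those of $S$; the balanced connected case is the usual spanning-tree bound; and in the unbalanced connected case your greedy growth from an unbalanced cycle $C$ adds one new vertex and one new edge at each step, never creates a second cycle, and so produces an independent set of size $|V(F)|$, matching the unicyclic upper bound $|I|\le|V(I)|$. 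Your closing remarks on conventions are exactly the right points to pin down in the multigraph setting (a loop with non-identity gain is an unbalanced cycle on one vertex, two parallel arcs whose gains differ form an unbalanced cycle on two vertices), and with them the argument is complete. The trade-off is the usual one: the citation route imports Zaslavsky's general machinery, while your proof is longer on the page but uses nothing beyond the definition of the frame matroid given in the paper, which is arguably preferable for a reader unfamiliar with biased graphs.
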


We are now ready to prove Theorem~\ref{thm:mainTheoremTranslationsAndRotations}.

\begin{proof}[Proof of Theorem~\ref{thm:mainTheoremTranslationsAndRotations}]
    Let $\mathcal{C}$ and $f$ be defined as in Proposition~\ref{prop:mainTheoremTerseVersion}.
    Define $g:2^{A(G)}\rightarrow \mathbb{Z}$ by $g(F) = 2|V(F)|-\alpha(F)$.
    Proposition~\ref{prop:mainTheoremTerseVersion} implies that it suffices to show
    $\mathcal{M}(f) = \mathcal{M}(g)$.
    We begin by assuming that $|C(F)|=1$.
    We will show that in this case $f(F) = g(F)$.
	We will repeatedly invoke Lemma~\ref{lemma:rankOfGainMatroid} without explicitly saying so.
	If $F$ is not $\mathcal{C}$-balanced, then $f(F) = 2r_M(F)$.
	In this case, either every cycle in $F$ is balanced in $(K_n(\mathcal{S}),\pi_2\circ\psi_n)$,
	or $F$ contains a non-Dutch bicyclic subgraph with no balanced cycle.
	In the first case, $r_M(F) = |V(F)|-1$ and $\alpha(F) = 2$.
	In the second case, $r_M(F) = |V(F)|$ and $\alpha(F) = 0$.
	Either way, $f(F) = 2|V(F)|-\alpha(F)$.
	Now assume $F$ is $\mathcal{C}$-balanced.
	Then $f(F) = 2r_M(F) - 1$.
	If every cycle in $F$ is balanced in $(K_n(\mathcal{S}),\pi_2\circ\psi_n)$,
	then $r_M(F) = |V(F)|-1$ and $\alpha(F) = 3$.
	Otherwise, $r_M(F) = |V(F)|$ and $\alpha(F) = 1$.
	Again, $f(F) = 2|V(F)|-\alpha(F)$ in either case.

    We now allow $|C(F)|\ge 2$.
    For any $F \subseteq E$, it is easy to see that $\sum_{F' \in C(F)} f(F') \le f(F)$.
    It is also true that $\sum_{F' \in C(F)} g(F') \le g(F)$, and this can be seen as follows
    \begin{align*}
        \sum_{F' \in C(F)}g(F') &= 2|V(F)| - \sum_{F' \in C(F)}\alpha(F')
        \\&\le 2|V(F)| - \min_{F' \in C(F)}\alpha(F') \\
        &= 2|V(F)|-\alpha(F) \\&= g(F).
    \end{align*}
    So if $F$ is independent in $\mathcal{M}(f)$,
    then for any $F' \subseteq F$,
    \[
        |F'| = \sum_{F'' \in C(F')} |F''| \le \sum_{F'' \in C(F')} f(F'') = \sum_{F'' \in C(F')}g(F'')\le g(F')
    \]
    so $F$ is independent in $\mathcal{M}(g)$.
    If $F$ is dependent in $\mathcal{M}(f)$, then we without loss of generality assume $|F| > f(F)$.
    In this case, some $F' \in C(F)$ satisfies $|F'| > f(F')$, since otherwise
    \[
        |F| = \sum_{F' \in C(F)} |F'| \le \sum_{F' \in C(F)} f(F') \le f(F).
    \]
    But if $F'\in C(F)$, then $f(F') = g(G')$, thus implying that $F$ is dependent in $\mathcal{M}(g)$.
\end{proof}

\section{Conclusion}

The main contribution of this paper was to unify several symmetry-forced rigidity results under a single theorem and proof in a way that generalizes to more symmetry groups. The new groups covered by Theorem~\ref{thm:mainTheoremTranslationsAndRotations} are the frieze group generated by a $180^\circ$ rotation and a one-dimensional translation group, and subgroups of $\mathbb{R}^2\rtimes SO(2)$ that violate the discreetness condition of wallpaper and frieze groups (e.g.~the group generated by a five-fold rotation and a two-dimensional lattice of translations).

The most important technical tool behind Theorem~\ref{thm:mainTheoremTranslationsAndRotations} was Theorem~\ref{thm:mainAffine}, a submodular-function-theoretic formula, proven using tropical geometry, for describing the algebraic matroid of a Hadamard product of two affine spaces in terms of the matroids of each, as well as a lift of each such matroid. This proof technique also gives a concise reason why in the two-dimensional case, it has been historically easier to derive symmetry-forced rigidity results for orientation-preserving groups than for orientation-reversing groups -- in the former case, the relevant varieties are Hadamard products of affine spaces, whereas in the latter they are not. Similarly, the two-dimensional Cayley-Menger variety is a Hadamard product of linear spaces, whereas the three-dimensional one is not, thus giving yet another reason why two-dimensional rigidity is so much easier than three-dimensional rigidity.

We end by listing several directions for future research.

\begin{itemize}
    \item Generalize Theorems~\ref{thm:mainResult} and~\ref{thm:mainAffine} to handle more than two linear/affine spaces. In particular, resolve Conjecture~\ref{conj:theOnlyOne}.
    \item Are there other varieties, whose algebraic matroids are relevant for applications, that happen to be Hadamard products of affine spaces? If so, use (a generalization of) Theorem~\ref{thm:mainAffine} to characterize these algebraic matroids.
    \item Extend these tropical techniques to classify generic symmetry forced infinitesimal rigidity for two-dimensional frameworks with symmetry groups that are orientation reversing. The relevant varieties are no longer Hadamard products, but they are images of linear spaces under quadratic monomial maps. Their tropicalizations are therefore obtainable from Bergman fans by summing certain pairs of coordinates.
    \item Via graph pebbling, Theorem~\ref{thm:lamansTheorem} gives rise to a polynomial-time algorithm for recognizing rigidity in the plane~\cite{jacobs1997algorithm}. Perhaps these ideas can be generalized to extract a polynomial-time algorithm from Theorem~\ref{thm:mainTheoremTranslationsAndRotations} for recognizing symmetry-forced rigidity for orientation-preserving groups.
\end{itemize}

\bibliographystyle{plain}
\footnotesize
\bibliography{DTMU}

\end{document}